\def\maxwidth{\ifdim\Gin@nat@width>\linewidth\linewidth\else\Gin@nat@width\fi}
\def\maxheight{\ifdim\Gin@nat@height>\textheight\textheight\else\Gin@nat@height\fi}
\def\fps@figure{htbp}
\newlength{\cslhangindent}
\newlength{\csllabelwidth}
\newlength{\cslentryspacingunit} 
  \let\oldpar\par
  \def\par{\hangindent=\cslhangindent\oldpar}
\newtheorem{theorem}{Theorem}
\newtheorem{lemma}{Lemma}
\theoremstyle{remark}
\newtheorem{remark}{Remark}
\author{}
\date{\vspace{-2.5em}}
\begin{document}

\newcommand{\AppLemmasOneToFourProofs}{Appendix 1}
\newcommand{\AppLemmaOneOneProof}{Appendix 1.1}
\newcommand{\AppLemmaOneTwoProof}{Appendix 1.2}
\newcommand{\AppLemmaOneThreeProof}{Appendix 1.3}
\newcommand{\AppLemmaOneFourProof}{Appendix 1.4}

\newcommand{\AppProcessDefinition}{Appendix 2}

\newcommand{\AppConvergenceRateProofs}{Appendix 3}
\newcommand{\AppConvergenceRateProofOne}{Appendix 3.1}
\newcommand{\AppConvergenceRateProofTwo}{Appendix 3.2}
\newcommand{\AppConvergenceRateProofThree}{Appendix 3.3}

\newcommand{\AppHessianInverseBoundness}{Appendix 4}

\newcommand{\AppVarianceCalculation}{Appendix 5}

  \newcommand{\revision}[1]{{\color{red} #1}} 

\title{Estimation of service value parameters for a queue with unobserved balking\footnote{To appear in Queueing Systems: Theory and Applications}}
\author{Daniel Podorojnyi~and Liron Ravner\footnote{Department of Statistics, University of Haifa, Israel}}

\maketitle
	
	\vspace{01cm}
\begin{abstract}

In Naor’s model \cite{Naor1969}, customers decide whether or not to join a queue after observing its length. This work considers a variation in which customers are heterogeneous in their service value (reward) $R$ from completed service and homogeneous in the cost of staying in the system per unit of time. It is assumed that the values of customers are independent random variables generated from a common parametric distribution. The manager observes the queue length process, but not the balking customers. Assuming that the distribution of $R$ admits a known parametric form, a Maximum Likelihood Estimator based on the queue length data is constructed for the underlying parameters of $R$. We provide verifiable conditions for which the estimator is consistent and asymptotically normal. The estimation procedure is further leveraged to construct a dynamic pricing scheme that estimates the revenue maximizing admission price by iteratively updating the price using the estimated parameters. The performance of the estimator and the pricing algorithm are studied through a series of simulation experiments.

\end{abstract}

\section*{Acknowledgements}
This work was supported by the Israel Science Foundation (ISF), grant no. 1361/23. The authors would like to thank the anonymous reviewers and associate editor for their detailed comments and suggestions. We are also grateful to Alexander Goldenshluger for his helpful feedback on early versions of this work.

\section{Introduction}\label{introduction.}\label{sec:intro}

Economic models of congested service systems typically involve population-level parameters that govern demand and, in turn, optimal admission pricing. Two fundamental parameters are the individual’s valuation of the service and their sensitivity to waiting time, which together determine the maximum delay a customer is willing to tolerate. These parameters are generally unobserved by the system administrator and may vary substantially across customers. Such heterogeneity is naturally modeled by treating customer valuations as random variables drawn from a population distribution.

A common approach is to assume a parametric form for this distribution and to estimate its parameters using observable system data, such as queue-length trajectories, service times, and interarrival times. These estimates can then be used to address questions of optimal system design and control. For instance, one may compute a revenue-maximizing admission price based on the inferred demand characteristics. Alternatively, social welfare maximization, often of interest to government agencies or non-profit organizations, such as subsidized meal centers, also relies on understanding the underlying utility parameters of customers. In contrast, private service providers, such as restaurant owners, may focus primarily on profit maximization.

In this work, we study a variant of Naor’s classical observable-queue model \cite{Naor1969}, in which customer service values are random and follow a parametric distribution $F_\theta$. Customers observe the queue length upon arrival and decide whether to join or balk based on their individual service value, inducing a random balking threshold. The system administrator observes only the effective queue-length process, as balking customers leave without being recorded. Using this partial information, we construct a maximum likelihood estimator (MLE) for the parameters of the service-value distribution and analyze its asymptotic properties under suitable regularity conditions on the parametric family. 

The estimation procedure is leveraged to develop an iterative pricing algorithm aimed at revenue maximization. We assume that the system operator can set an admission price and wishes to maximize the per unit of time revenue. The operator knows the parametric form of the reward distribution $F_\theta$, but does not observe the lost demand stemming from balking customers. For any given price the operator computes the MLE after observing the queue length for a duration of time. This is then used to update the price with a new an estimator for the revenue maximizing price. This process is repeated with increasingly large data sets, which ensures convergence to the true parameters due to the aformentioned asymptotic guarantees.

\subsection{Statistical model and objective}\label{sec:model}

Customers arrive at a single first-come-first-served (FCFS) queue according to a stationary Poisson process with rate $\lambda$. The service times are independently, identically, and exponentially distributed with parameter $\mu$. Customers are heterogeneous and strategically decide whether to join or balk after observing the queue length. In Naor's model (see \cite{Naor1969}, \cite{Larsen1998} and\cite{book_HH2003}), a customer's benefit from completed service is $R$. The cost to a customer for staying in the system (either while waiting or being served) is $C$ per unit of time spent in the system (sojourn+service time). The system charges an admission price $p$ from every joining customers. Customers are risk-neutral, meaning they maximize the expected value of their net benefit. From a public (social) point of view, the utility functions of individual customers are identical and additive. Thus, given a value $R$, every customer has an individual optimal threshold strategy that dictates whether to join or balk at the observed queue length. A decision to join is irrevocable, and reneging is not allowed. A customer who balks leaves the system and never returns.  The service values of customers are iid with a common random variable $R\sim F_\theta$, where $F_\theta$ is a cumulative distribution function (cdf) and $\theta\in\mathbb{R}^n$ is a parameter that fully determines the distribution (see \cite{Larsen1998}). We further assume that $C$ is fixed and known by all customers and the system administrator.   

For an admission price $p$, a customer with service value $R$ that, observes a queue of length $q$ upon arrival, joins the queue if the expected utility is non-negative; $R-p-\frac{C(q+1)}{\mu}>0$. In other words, the customer joins the queue if the benefit $R-p$ is higher than the total expected time in the system multiplied by C, i.e.,~if $R-p \geq \frac{(q+1)C}{\mu}$. It is easy to show (see \cite{Naor1969}) that the customer balks if he observes $n_e = \lfloor\frac{(R-p) \mu}{C}\rfloor$ where $\lfloor x\rfloor$ is the floor function.  The probability of this event is
\(1-F_\theta(p + \frac{(q+1)C}{\mu})\). Thus, the joining process is not
Poisson with rate $\lambda$, but rather a Poisson process with state-dependent rates;
\[\lambda_q=\lambda \mathrm{P}\left( R \geq p + \frac{(q+1)C}{\mu} \right) = 
\lambda \left(1-F_\theta\left(p + \frac{(q+1)C}{\mu}\right)\right)\ , q=0,1,2,\ldots\ .\]
We denote the arrival probability at state $q$ by $\mathrm{P}_q=1-F\left(p + \frac{(q+1)C}{\mu}\right)$, and illustrate the transition diagram of the underlying CTMC in Figure~\ref{fig:transition}.

\begin{figure}
\centering
\includegraphics{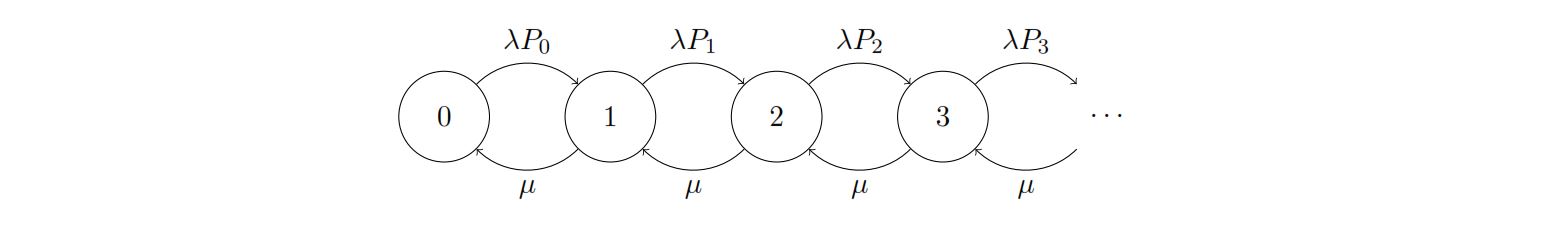}
\caption{Transition diagram of a queue with Poisson arrival process with
state dependent rate \(\lambda_q\) and exponential service rate
\(\mu\).}\label{fig:transition}
\end{figure}
\vspace{0.5cm}

The available data is the queue-length process: \((Q_0,\ldots,Q_k)\), where \(Q_0\) is an initial state and \(k\) is the sample size (or number of transitions).  This implies that the customers who balk are not observed; consequently, this information cannot be used.  This yields an interesting trade-off between revenue maximization and statistical inference. Namely, a system with a higher admission price will have a lower joining rate (at every system state), and so the data collection for inference purposes will be slower. Thus, the price can be reduced to learn faster and obtain the optimal price. However, too low price will considerably reduce the profit. In Naor's model, for a fixed reward, it is typically assumed that \(R\mu \geq C\), so that customers always join an empty queue. We replace this assumption with \(\mathrm{P}(R\geq C/\mu)=F_\theta(C/\mu)>0\). Therefore, when the queue is empty, there is a positive probability that a customer will join the queue. Otherwise, the queue is always empty and there is no information for inference.

In this work,  we assume that the distribution of $R$ is known and derive an Maximum Likelihood Estimator for its parameter \(\pmb{\theta}\) (possibly vector). There is no way to directly observe realizations of \(R\); however, the queue that can be observed contains information about it. Thus, assuming that the cost of staying in the system per unit of time \(C\) is constant, we estimate the parameters of \(R\) using queue information. Further we show that proposed estimator is well behaving, e.g. it is consistent with asymptotically normal errors. We also provide numerical examples with exponential and hyperexponential distributions. And finally we propose iterative pricing algorithm as a use case for this kind of estimators.

\subsection{Outline and contributions}\label{sec:outline-and-contribution}

\begin{itemize}

\item
In Section~\ref{sec:maximum-likelihood-estimator-for-pmbtheta.} we introduce the model and develop an MLE for the parameters of the service value distribution. 

\item
In Section~\ref{sec:asymptotic-properties.}  we show that under standard regularity assumptions,  the MLE is consistent and the normalized errors are asymptotically normally distributed. A formula for the asymptotic variance is further derived in terms of $F$ and the corresponding stationary distribution of the queue-length.

\item Section~\ref{sec:examples} provides details on the implementation of the MLE for two specific value distributions: exponential and hyperexponential. The conditions for the asymptotic guarantees are verified for these examples and simulation experiments illustrating the performance are analysed. Moreover, the asymptotic variance sheds insight on the performance of the MLE for various levels of the (total) arrival rate. Notably, it is hard to learn the true parameters when the arrival rate is very low (not enough customers join) or very high (the queue length remains high for long periods). The method works best for intermediate levels of the arrival rate because many customers join, while still allowing the queue to empty frequently. 

\item The usefulness of the estimation method and of the asymptotic results is demonstrated by leveraging them to construct a data-driven revenue maximization algorithm. In Section~\ref{sec:iterative-pricing-algorithm} we propose an iterative data-driven pricing algorithm. The algorithm sets a price at every iteration and collects queue length data for predefined duration (in terms of samples), and then updates the MLE. With the new MLE a new optimal price is estimated. This procedure is repeated with increasing sample sizes. The aforementioned asymptotic guarantees ensure that the algorithm converges to the revenue maximizing price. The performance of the algorithm is investigated through a series of simulation experiments. A notable observation is that in the case of hyperexponential service value, which can approximate a broad class of continuous distributions, the optimal price can be estimated accurately with a much smaller sample size than that required for an accurate MLE. In other words, knowledge of the exact parameters of the of the true service value distribution is not essential for revenue maximization. 

\item
Section~\ref{sec:conclusions-and-extensions.} provides concluding remarks and discusses several possible extensions of the method.

\end{itemize}

\subsection{Literature review}\label{sec:literature-review}

The first model that proposed quantitative analysis of an entrance fee (price) in queueing models with economic parameters was Naor's model \cite{Naor1969}. The model analyzed optimal pricing for profit and social welfare maximization for the FCFS \(M/M/1\) queueing model. In Naor's settings, the arriving customer observes the queue length and decides whether they want to join the queue or balk. The decision is based on \(C\) - the cost of staying in the system per unit of time, and the benefit gained from the completed service \(R\) and \(\mu\) - the service rate. As stated above, the individually optimal joining threshold is given by \(n_e = \lfloor\frac{(R-p) \mu}{C}\rfloor\). Naor further characterized the socially optimal threshold \(n^*\) and the monopolistic (profit-maximizing) threshold \(n_m\). In particular, he showed that \(n_m \leq n^* \leq n_e\), where the total social welfare is maximized under the assumption that individual benefits are additive. Customers can be `forced' to join according to the optimal thresholds by charging an admission fee (can be interpreted as service price) \(p\), or an increase of the cost per unit of time \(C\) (see \cite{book_HH2003}). The price can be decided on in different ways.

Naor's model was extended in \cite{Larsen1998} to allow for heterogeneous service values. In this case, \(R\) is assumed to be a continuous, non-negative random variable, and then the underlying process is an M/M/1 with state-dependent joining rates (see Figure~\ref{fig:transition}). \cite{Larsen1998} derived, numerically, the socially and monopoly optimal prices for \(R \sim U(a,b)\). It is further shown that the profit-maximizing fee is larger or equal to the fee that optimizes social welfare. However, this property does not necessarily hold if customers differ by their waiting time sensitivity (see \cite{book_HH2003}). Schroeter\footnote{The article is discussed in \cite{book_HH2003}, but it was never published. All attempts to find the original article were unsuccessful.} made a generalization in a different direction. He assumed \(C \sim U(0,b)\) and derived the profit-maximizing price. A customer observing a queue of length \(q_i\), upon arrival, joins the queue if \(C \leq \frac{(R-p)\mu}{q_i+1}\). Under some assumptions, Schroeter found that the profit-maximizing price admits a closed form solution. The question of maximizing social welfare and revenue for systems with heterogeneous customers has since been addressed in many studies with various model assumptions, and we refer the reader to \cite{book_H2016} for an in-depth review of these results.

There is a broad literature dealing with parameter and state estimation for queueing systems, which has recently been surveyed in \cite{asanjarani2021survey}. This literature focuses mostly on estimating the features of the queue, such as queue states, arrival and service rates, workload, idle times, and so on. In other words, the parameters of the queue itself. To the best of our knowledge, only a few works have concentrated on studying the customers' economic characteristics through the queuing process. One such work is \cite{ravner_estimating_2023}, which proposes an estimator for delay and tardiness sensitivities in a queue with strategic timing of arrivals. They show that the estimator is strongly consistent, and the normalized errors are asymptotically normal with a known covariance matrix. The estimator is constructed using the method of moments from the Wardrop equilibrium condition. There is a stream of literature that considers the problem of estimating customer patience for service systems with abandonments (e.g., \cite{AAES2013}). Note that the current work is also related to the classical statistical problem of asymptotic performance of maximum likelihood estimators of parameters of Markov chains and Birth-death processes in particular (e.g., \cite{B1961} and \cite{W2012}).

In the context of systems with unobserved balking, \cite{inoue2023estimating} deals with the maximum likelihood estimation of customer patience parameters based on effective inter-arrival time data. They assume customers have a random continuous patience level and will join the queue only if the delay is below this level. Note that this does not necessarily assume a utility-based decision as in Naor's model. Their method involves deriving a maximum likelihood estimator (MLE) based on observations of effective inter-arrival times. Strong consistency of the MLE and the asymptotic distribution of the estimation error, which is not always normal, are derived. This approach has been extended to a multi-server non-homogeneous in time arrival process in \cite{BMR2023}. A closely related problem is that of admission pricing in the presence of uncertainty regarding customer parameters. For example, in Naor's model, \cite{afeche_bayesian_2013} assume an identical reward and two types of customers: patient customers with a low waiting cost $c_L$, and impatient customers with a high waiting cost $c_H$. They propose a Bayesian dynamic pricing model (queue-length dependent) to estimate the proportion of patient customers and establish convergence to the optimal pricing policy. In \cite{CLH2023}, an online stochastic approximation algorithm is presented for the problem of jointly setting an admission price and capacity allocation. This is done for a G/G/1 queue where the incoming arrival rate is a function of the admission price, but not of the real-time congestion levels. Under mild assumptions the algorithm is shown to converge to the optimal policy. A final related work is \cite{RS2023} that presents a stochastic approximation algorithm that estimates the Nash equilibrium (rather than system optimal) strategy for a general class of queueing games.

The problem of estimating customer utility parameters in the presence of unobseved balking has recently been studied in \cite{BMR2023} and \cite{inoue2023estimating}. Similarly to the current paper, these works also exploit properties of the observable queueing model for MLE construction that aims to learn behavior of the customers. However, \cite{BMR2023} and \cite{inoue2023estimating} focus on the patience (delay sensitivity) of customers, while in the current paper we are interested in parameters of service value distribution (R). On a methodological level, the estimators in \cite{BMR2023} and \cite{inoue2023estimating} rely on the operator continuously observing the workload in the system and being able to convey this information accurately to arriving customers. The estimation procedure presented in this work relies only on discrete queue length data. Furthermore, customers make their joining decisions based on the queue length, which is also a more common assumption in service systems because exact real waiting time information is often unavailable.

\section{Maximum likelihood estimator for
\(\pmb{\theta}\)}\label{sec:maximum-likelihood-estimator-for-pmbtheta.}

Assume that customers arrive at a single server facility according to a
Poisson process with rate \(\lambda\). The service times are
independently, identically, and exponentially distributed with parameter
\(\mu\). Upon arrival, customer $i$ sees a queue length \(q_i\) and a fixed admission price \(p\). Assume that the customers are
homogeneous in their waiting time sensitivity,  i.e., \(C\) is constant, and that this value is known. Also, \(R\) is a RV with some CDF \(F(r,\pmb{\theta})\).
For the sake of brevity, we simplify \(F(r,\pmb{\theta})\) to \(F(r)\),
however, the reader should keep in mind that \(F\) does depend on
\(\pmb{\theta}\). In addition, as was previously mentioned, we assume that there is a positive probability that a customer joins an empty queue; \(F(C/\mu,\pmb{\theta})>0\).

The queue-length process is clearly a continuous-time Markov chain (specifically, a birth-death process) on the non-negative integers, with state dependent transition rates. Throughout this work we consider the underlying discrete-time jump chain. We refer to a transition in the queue state as a ``step'', and the
initial state of the system is the queue length in step \(0\) - before
the first jump. Thus, the system state at step \(i\) is the queue length
after \(i\) jumps.  Let $Q_i$ denote the state at step $i\geq 0$,. Given \(Q_{i-1}=q_{i-1}\)), applying
the Markov property, the probability that in step \(i\) the queue length
(\(Q_{i}\)) increases by one is
\[
\mathrm{P}(Q_{i}=q_{i-1}+1 | Q_{i-1}=q_{i-1})=\left\{\begin{matrix}
1 &,\:\: q_{i-1}=0  \\
\frac{\lambda_i}{\lambda_i+\mu}  &,\:\: q_{i-1}>0
\end{matrix}\right.\;,
\]
where
\[
\lambda_i:=\lambda\left(1-F\left(p+\frac{C(q_{i-1}+1)}{\mu}\right)\right)\;\;.
\]
Similarly,
\[
\mathrm{P}(Q_{i}=q_{i-1}-1 | Q_{i-1}=q_{i-1})=\left\{\begin{matrix}
0 &,\:\: q_{i-1}=0  \\
\frac{\mu}{\lambda_i+\mu}  &,\:\: q_{i-1}>0
\end{matrix}\right.\;.
\]
For the sake of a compact representation,  let \(r(q_i)=p+\frac{(q_i+1)C}{\mu}\).  Iterating the Markov property,  the probability to see a path
\(q_k,q_{k-1},q_{k-2},...,q_{0}\), for a given parameter
\(\pmb{\theta}\), is
\begin{align*}
& \mathrm{P}(Q_k=q_k,Q_{k-1}=q_{k-1},Q_{k-2}=q_{k-2},...,Q_{0}=q_{0},\pmb{\theta})=\prod_{i=1}^k\mathrm{P}(Q_{i}=q_{i}|Q_{i-1}=q_{i-1},\pmb{\theta}) \\
& \ =\prod_{i\in K_{1}} \mathrm{P}(Q_{i}=q_{i-1}+1 | Q_{i-1}=q_{i-1},\pmb{\theta}) \prod_{i\in K_2} \mathrm{P}(Q_{i}=q_{i-1}-1 | Q_{i-1}=q_{i-1},\pmb{\theta})\prod_{i\in K_{3}} 1, 
\end{align*}
where \(K_1=\{1\leq i\leq k\:|\: Q_{i}=q_{i-1}+1, q_{i-1}\neq0 \}\),
\(K_2=\{1\leq i\leq k\:|\: Q_{i}=q_{i-1}-1,\: F(r(q_{i-1}))\neq1 \}\)
and
\(K_3=\{1\leq i\leq k\:|\: F(r(q_{i-1}))\in\left\{ 0,1 \right\} \}\).
\(K_3\) is a group of indices where the probability of upward or
downward transition is \(1\).
\(\mathrm{P}(Q_{i}=q_{i-1}+1 | Q_{i-1}=q_{i-1})=1\) when
\(F(r(q_{i-1}))=0\) and
\(\mathrm{P}(Q_{i}=q_{i-1}-1 | Q_{i-1}=q_{i-1})=1\) when
\(F(r(q_{i-1}))=1\). Thus, this likelihood can be rewritten as
\[
\mathcal{L}(\pmb{\theta}|Q_k):=
\prod_{i\in K_{1}} \frac{\lambda(1-F(r(q_{i-1}),\pmb{\theta}))}{\mu+\lambda(1-F(r(q_{i-1}),\pmb{\theta}))}
\prod_{i\in K_2} \frac{\mu}{\mu+\lambda(1-F(r(q_{i-1}),\pmb{\theta}))}\prod_{i\in K_{3}}1 \;\;.
\]

Thus, after \(k\) steps, the log-likelihood of \(\pmb{\theta}\) is:
\begin{align*}
 \log \mathcal{L}(\pmb{\theta}|Q_k) &=
\log \left\{\Pi_{i\in K_{1}} \frac{\lambda(1-F(r(q_{i-1}),\pmb{\theta}))}{\mu+\lambda(1-F(r(q_{i-1}),\pmb{\theta}))}
\Pi_{i\in K_2} \frac{\mu}{\mu+\lambda(1-F(r(q_{i-1}),\pmb{\theta}))}\Pi_{i\in K_{3}}1\right\} \\
&\ =-\sum_{i\in K_1\cup K_2}\log\left\{\mu+\lambda\left(1-F(r(q_{i-1}),\pmb{\theta})\right) \right\}+
\sum_{i\in K_1}\log\left\{ \lambda(1-F\left(r(q_{i-1}),\pmb{\theta})\right) \right\}+
|K_2|\log\left\{ \mu \right\}\;,
\end{align*}
where \(|K_2|\) is the cardinality of \(K_2\).

The Maximum Likelihood Estimator (MLE) is the solution of the
optimization problem
\[
\hat{\pmb{\theta}}_k=\underset{\pmb{\theta}\in\pmb{\Theta}}{\text{argmax}}\left\{ \log \mathcal{L}(\pmb{\theta}|Q_k) \right\},
\]
where \(\pmb{\Theta}\subset\mathbb{R}^n\) is the parameter space. If the
solution is interior, it can be derived with the first-order condition
\begin{align*}\label{esteq}
\nabla  \log \mathcal{L}(\pmb{\theta}|Q_k) =\pmb{0},
\end{align*}
where \(\nabla  \log \mathcal{L}(\pmb{\theta}|Q_k)\) is a vector-valued gradient
\(\mathbb{R}^n \rightarrow\mathbb{R}^n\) of log-likelihood function. Or,
in more detail,
\[
\begin{matrix}
\frac{\partial\log \mathcal{L}(\theta_j|Q_k)}{\partial \theta_j}=
\sum_{i\in K_1\cup K_2}\left(  \frac{\lambda F_{\theta_j}'(r(q_{i-1}),\pmb{\theta})}{\mu+\lambda(1-F(r(q_{i-1}),\pmb{\theta}))}  \right)-
\sum_{i\in K_1}\left( \frac{\lambda F_{\theta_j}'(r(q_{i-1}),\pmb{\theta})}{\lambda(1-F(r(q_{i-1}),\pmb{\theta}))} \right)=0\;,\\
\\
\forall j=1.\ldots, n
\end{matrix}
\]
where \(F_{\theta_j}'=\frac{\partial F}{\partial \theta_j}\). Otherwise,
it lies on the boundary of \(\pmb{\Theta}\). In addition to \(Q_k\) as
defined previously, we denote the random variable
\(\mathbf{Y}_k=(Y_1,Y_2,...,Y_k)\) where \(Y_i\) is indicator of an
upward transition \(Y_i=1(Q_i>Q_{i-1})\). Thus, conditional on
\(Q_{i-1}=q_{i-1}\),
\[
Y_i \sim Ber(p(q_{i-1},\pmb{\theta})),
\]
where $p(q_{i-1},\pmb{\theta}):=\mathrm{P}(Q_{i}=q_{i-1}+1 | Q_{i-1}=q_{i-1})$. In addition, we introduce \(M:=K_1\cup K_2=K \backslash K_3\). The set
\(M\) of indexes \(i\) , is the set of indexes where
\(F(r(q_{i-1}),\pmb{\theta})\notin\left\{ 0,1 \right\}\), formally:
\(M=\{1\leq i\leq k\:|\: F(r(q_{i-1}),\pmb{\theta})\notin\left\{ 0,1 \right\} \}\).
\(M\) can be thought as effective sample and \(|M|\) as effective sample
size, while \(K\) is the full sample and \(|K|=k\) is the full sample
size.

For any \(q\) such that
\(F(r(q_{i-1}),\pmb{\theta})\in\left\{ 0,1 \right\}\),
\(p(q_{i-1},\pmb{\theta})\in \left\{ 1,0 \right\}\), so
\(\frac{\partial p(0,\theta_j)}{\partial \theta_j}=0\),
\(\forall j \in n\). This case does not add any information to the
likelihood function or its derivative. Thus, to simplify further
notions, we ignore this case and consider only the sample of observations with indices in $M$.  It is important to note that it
does not affect assumptions and results. Observe that \(p(q_{i-1},\theta)=\frac{\lambda_i}{\lambda_i+\mu}\) for any \(i\in M\).
Then,  the log-likelihood can be rewritten as
\begin{align*}\label{logder}
\log \mathcal{L}(\pmb{\theta}|Q_k,Y_k)=
\frac{1}{k}\sum_{i\in M}[Y_i \log\left\{p(Q_{i-1},\pmb{\theta})\right\}+(1-Y_i)\log\left\{1-p(Q_{i-1},\pmb{\theta})\right\}].
\end{align*}

The gradient is given by
\[\nabla\log \mathcal{L}(\pmb{\theta}|Q_k,Y_k):=\pmb{\Psi}_k(\pmb{\theta})=\frac{1}{k}\sum_{i\in M}\pmb{\psi}(Q_{i-1},Y_i,\pmb{\theta})\]
where
\begin{align*}\label{psi_i_def}
\pmb{\psi}(Q_{i-1},Y_i,\pmb{\theta})=
\begin{bmatrix}
\psi^1(Q_{i-1},Y_i,\pmb{\theta})
\\
\\
\vdots
\\
\\
\psi^n(Q_{i-1},Y_i,\pmb{\theta})
\end{bmatrix}=
\begin{bmatrix}
Y_i \frac{p'_{\theta_1}(Q_{i-1},\pmb{\theta})}{p(Q_{i-1},\pmb{\theta})}
- (1-Y_i) \frac{p'_{\theta_1}(Q_{i-1},\pmb{\theta})}{1-p(Q_{i-1},\pmb{\theta})}
\\
\\
\vdots
\\
\\
Y_i \frac{p'_{\theta_n}(Q_{i-1},\pmb{\theta})}{p(Q_{i-1},\pmb{\theta})}
- (1-Y_i) \frac{p'_{\theta_n}(Q_{i-1},\pmb{\theta})}{1-p(Q_{i-1},\pmb{\theta})}
\end{bmatrix}
\end{align*}

\begin{remark}
  Frequently, observed information can be noisy and this can affect the model performance and theoretical results. for instance, the noise can be in queue length observed by arriving customer. Alternatively, the noise can appear in the data collected by administrator. We briefly address how our analysis can be modified to account for such setting.
  
Suppose that upon arrival of customer $i$, the queue length is $q_i$, however there is an error in observation, and the customer observes queue of length $u_i$.  The probability to see the queue of length $u_i$ when the actual queue is $q_i$, is $P(U_i=u_i|Q_i=q_i)$. Then the probability that customer will join when the actual queue length upon arrival is $q_i$ is
$$\sum_{u_i=0}^{\infty}\left(1-F\left(r(u_i)\right)\right)P(U_i=u_i|Q_i=q_i)$$
which replaces the probability to join in system with clean information $1-F\left(r(q_i)\right)$. If the conditional distribution of $U_i$ is known, it can be taken into account as was shown. In this case the assumptions verification may be challenging, however the general estimation approach remains unchanged.

The other case is noisy data collection. The effect depends on how data is collected. For example, one can consider a case when system state is computed from registered arrivals and departures. Let $\{I^a|Q=q\}$ be an indicator that an arrival is registered given the system size $q$. Similarly, let $\{I^d|Q=q\}$ be an indicator that an departure is registered given the system size $q$. Then $\{I^a|Q=q\}\sim Ber(p_q^a)$ and $\{I^d|Q=q\}\sim Ber(p_q^d)$. Then, each upward probability and downward probability in the likelihood function must be multiplied by $p_q^a$ and $p_q^d$ correspondingly. Note that now the likelihood depends also on the parameters $(p_q^a,p_q^d)$ which can be jointly estimated together with $\theta$. 
\end{remark}

\section{Asymptotic properties.}\label{sec:asymptotic-properties.}

In current section we present sufficient conditions for consistency of the MLE, and for asymptotic normality of the estimation errors. Consistency provide theoretical guarantees that estimated value converges to the true one as sample size grows to infinity. In practice one can be sure that good estimation is only a matter of amount of data and computational power. Knowledge of asymptotic distribution of the estimation error can be very useful as provides considerably more information than only estimation. It can be used for proximity evaluation of single estimation from the theoretical value. One can use it for statistical testing and computation of confidence interval.

The fact that customers are less likely to join as the queue grows,  i.e., $1-F(r)\to0$ as $r\to\infty$,  ensures that the queue-length process is stable. Formally, we will later assert that the sequence $(Q_{i-1},Y_i)$ converges to a stationary distribution. We will assume throughout that the initial state follows the stationary distribution, hence the stationary random vector is represented by $(Q_0,Y_1)$. The ergodic limit of the gradient with respect to the parameter, if it exists, is denoted by $\pmb{\Psi}$; 
\begin{align*}
\pmb{\Psi}_k(\pmb{\theta})\overset{a.s.}{\longrightarrow}\pmb{\Psi}(\pmb{\theta})\ , \pmb{\theta}\in\Theta. 
\end{align*}

Note that we use $||\cdot||$ for the euclidean distance of vectors in $\mathbb{R}^n$, and $|\cdot|$ for a vector of absolute values of their coordinates.  For a matrix $\mathbf{A}$, its transpose is denoted by $\mathbf{A}^T$ and its inverse by $\mathbf{A}^{-1}$.  Let $\pmb{\theta}_0$ be true value of $\pmb{\theta}$, i.e. actual decision of arriving customers is based on $R\sim F(r,\pmb{\theta}_0)$. We further assume that $F$ is identifiable; there exist no $\pmb{\theta}\neq \pmb{\theta}_0$ such that $F(r,\pmb{\theta})=F(r,\pmb{\theta}_0)$ for all $r\geq 0$.

\noindent{\bf Assumptions}

\newcommand{\AssOnTheta}{A1}
\textbf{A1.} \(\pmb{\Theta}\) is a compact and convex set and
\(\pmb{\theta}_0\) is an interior point in \(\pmb{\Theta}\).

\newcommand{\AssContDiffInTheta}{A2}
\textbf{A2.} \(F(r,\pmb{\theta})\) is continuous w.r.t.
\(\pmb{\theta}\in\pmb{\Theta}\). Furthermore,  the gradient $\nabla F$ is also continuous and differentiable w.r.t. \(\pmb{\theta}\in\pmb{\Theta}\).  

\newcommand{\AssPsiBoundness}{A3}
\textbf{A3.}   There exists a real valued function $H:\mathbb{N}\to\mathbb{R}$ such that
\begin{align*}
\max_{j=1,\ldots,n}\left|\frac{ \nabla F(r(q),\pmb{\theta})}{1-F(r(q),\pmb{\theta})}\right|_j\leq H(q)\ , \forall q\geq 0 ,
\end{align*}
and $\mathbb{E}[H(Q_0)]<\infty$.

\newcommand{\AssHessianBound}{A4}
\textbf{A4.} For any $j,l\in\{1,\ldots,n\}$, let
\begin{align*}
\Sigma_{jl}:=\mathbb{E}\left[ \frac{\mu\lambda F'_{\theta_j}(r(Q_0),\pmb{\theta}_0)F'_{\theta_l}(r(Q_0),\pmb{\theta}_0)}{(1-F(r(Q_0),\pmb{\theta}_0))(\mu+\lambda(1-F(r(Q_0),\pmb{\theta}_0)))^2} \right].
\end{align*}
 The matrix $\pmb{\Sigma}$ is invertible and $|\Sigma_{jl}|<\infty$ for any $j,l\in\{1,\ldots,n\}$.

Our asymptotic results rely on the framework of \cite{book_vdV1998} and the assumptions are standard in statistical theory. The main modification is that we are dealing with indirect estimation. In particular, the data is not an iid sample of service value observations, but rather the Markov chain $(Q_{i-1},Y_i)$ whose transition matrix is determined by the service value distribution.  
For consistency the main idea is to ensure that $\pmb{\psi}(Q_{i-1},Y_i,\pmb{\theta})$ converges uniformly to a ``well-behaved'' theoretical score function that has a unique root at the true parameter value. Assumptions A1,  and A2 are used to establish continuity of \(\pmb{\psi}(q,y,\pmb{\theta})\) and its first two
derivatives w.r.t.  \(\pmb{\theta}\). In addition, in order
to show that MLE is consistent we must assume that \(\pmb{\theta}_0\) is an interior point of the parameter space (A1). If
\(\pmb{\theta}_0\) lies on the boundary of \(\pmb{\Theta}\) the
convergence to the true value is not guaranteed and the limiting
distribution of the error may not be normal. Assumption A3 will be used to construct a uniform bound on $\pmb{\psi}$ that is integrable with respect to the stationary distribution, which together with A1, A2 implies uniform convergence of $\pmb{\Psi}_k(\pmb{\theta})$ on $\pmb{\Theta}$. The last assumption (A4) is used to show that the stationary variance of the estimation error exists and is finite. This will be used to verify a martingale CLT implying that the estimation errors are normally distributed.

Perhaps the only non-standard assumptions here that arise from the special structure of the process is A3 and A4. To gain some additional intuition for these condition, \(1-F(r(q),\pmb{\theta})\) can be thought as balking rate, while \(\nabla_{\pmb{\theta}} F(r(q),\pmb{\theta})\) is gradient of
\(F(r(q),\pmb{\theta})\) w.r.t. \(\pmb{\theta}\). It converges to the
true value as number of observations grows. However, if the balking rate grows too fast as \(q\rightarrow \infty\), the effective sample can be too small.  Note, however, that this is a sufficient but not  necessary condition.  In Section~\ref{sec:examples} these conditions are verified for the special cases of exponentially and hyperexponentially distributed $R$.

\subsection{Consistency.}\label{sec:consistency.}

\begin{theorem}\label{theorem1}
Under Assumptions A1, A2, A3, the MLE is consistent; $\hat{\pmb{\theta}}_k\overset{p}{\rightarrow}\pmb{\theta}_0$ as $k\rightarrow\infty$.
\end{theorem}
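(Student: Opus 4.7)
The plan is to establish consistency via the standard M-estimator framework (cf.\ \cite{book_vdV1998}, Theorem 5.7), adapted from the i.i.d.\ setting to the ergodic Markov chain $(Q_{i-1},Y_i)$. The chain admits a unique stationary distribution -- which by hypothesis governs $(Q_0,Y_1)$ -- because the joining probabilities $1-F(r(q),\pmb{\theta}_0)$ vanish as $q\to\infty$, rendering the underlying birth--death process positive recurrent. Writing $m_{\pmb{\theta}}(q,y):=y\log p(q,\pmb{\theta})+(1-y)\log(1-p(q,\pmb{\theta}))$ for the per-step contribution to the log-likelihood and $\ell(\pmb{\theta}):=\mathbb{E}_{\pmb{\theta}_0}[m_{\pmb{\theta}}(Q_0,Y_1)]$ for its ergodic limit, the proof reduces to two ingredients: (i) uniform convergence $\sup_{\pmb{\theta}\in\pmb{\Theta}}|k^{-1}\log \mathcal{L}(\pmb{\theta}|Q_k,Y_k)-\ell(\pmb{\theta})|\overset{p}{\rightarrow}0$, and (ii) a unique maximizer of $\ell$ at $\pmb{\theta}_0$.

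For (i), pointwise convergence at each fixed $\pmb{\theta}$ is an immediate application of Birkhoff's ergodic theorem to the stationary chain $(Q_{i-1},Y_i)$. I would upgrade to uniform convergence by a stochastic equicontinuity argument: by A2, $\pmb{\theta}\mapsto m_{\pmb{\theta}}(q,y)$ is continuously differentiable, and a direct computation shows that each component of $\nabla_{\pmb{\theta}} m_{\pmb{\theta}}(q,y)$ is bounded, up to a universal constant, by $|\nabla F(r(q),\pmb{\theta})|/(1-F(r(q),\pmb{\theta}))$, which A3 dominates by the $\mathbb{E}[H(Q_0)]$-integrable envelope $H(q)$. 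Convexity of $\pmb{\Theta}$ (A1) then furnishes a mean-value Lipschitz estimate $|m_{\pmb{\theta}_1}(q,y)-m_{\pmb{\theta}_2}(q,y)|\leq c\,H(q)\,\|\pmb{\theta}_1-\pmb{\theta}_2\|$, and combining a finite covering of the compact set $\pmb{\Theta}$ with the ergodic theorem applied to $H(Q_{i-1})$ yields the uniform bound.

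For (ii), conditional on $Q_0=q$, $Y_1\sim\mathrm{Ber}(p(q,\pmb{\theta}_0))$, so
\[
\ell(\pmb{\theta}_0)-\ell(\pmb{\theta})=\mathbb{E}_{\pmb{\theta}_0}\bigl[\mathrm{KL}\bigl(\mathrm{Ber}(p(Q_0,\pmb{\theta}_0))\,\bigl\|\,\mathrm{Ber}(p(Q_0,\pmb{\theta}))\bigr)\bigr]\geq 0,
\]
with equality iff $p(q,\pmb{\theta})=p(q,\pmb{\theta}_0)$ on the (full) support of the stationary law of $Q_0$. Because $p(q,\cdot)$ is a strictly monotone function of $F(r(q),\cdot)$ and $\{F_{\pmb{\theta}}:\pmb{\theta}\in\pmb{\Theta}\}$ is a parametric family identified by $\pmb{\theta}$, this forces $\pmb{\theta}=\pmb{\theta}_0$. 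Combining (i) and (ii) with A1 (so that the argmax lies in the interior of a compact set), the standard argmax-continuity argument gives $\hat{\pmb{\theta}}_k\overset{p}{\rightarrow}\pmb{\theta}_0$.

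The principal technical obstacle is step (i): pointwise ergodic convergence is automatic, but uniform convergence requires that the gradient envelope from A3 hold simultaneously over all of $\pmb{\Theta}$ with a constant independent of $\pmb{\theta}$, which is the precise role of formulating A3 with a $\pmb{\theta}$-free majorant $H(q)$. A secondary subtlety is that $\log p(q,\pmb{\theta})$ diverges to $-\infty$ when $p$ approaches $0$ or $1$; the restriction of the sum to the set $M$ on which $F(r(q),\pmb{\theta})\notin\{0,1\}$, together with the $1/(1-F)$ factor already present in A3, keep the envelope integrable despite this potential singularity.
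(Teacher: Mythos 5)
Your proposal is correct in outline but takes a genuinely different route from the paper. You argue via the M-estimation theorem (\cite[Thm.~5.7]{book_vdV1998}): uniform ergodic convergence of the averaged log-likelihood $k^{-1}\log\mathcal{L}$ to a limit criterion $\ell(\pmb{\theta})$, plus identification of $\pmb{\theta}_0$ as the unique maximizer through a Kullback--Leibler decomposition of $\ell(\pmb{\theta}_0)-\ell(\pmb{\theta})$ into conditional Bernoulli divergences. The paper instead works at the level of the score: it invokes the Z-estimation theorem (\cite[Thm.~5.9]{book_vdV1998}), proving uniform convergence of the gradient $\pmb{\Psi}_k$ to $\pmb{\Psi}$ (Lemmas~\ref{lem:continuity}--\ref{lem:ergodic}, using the same envelope $2H(q)$ from A3 that you derive for $\nabla_{\pmb{\theta}}m_{\pmb{\theta}}$), well-separatedness of the root $\pmb{\theta}_0$ (Lemma~\ref{lem:unique}), and the additional Lemma~\ref{lem:root} showing the MLE is eventually an interior zero of the score. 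Your route buys two things: consistency does not require interiority of $\hat{\pmb{\theta}}_k$ (only compactness of $\pmb{\Theta}$), and the identification step is more transparent --- the KL argument makes explicit that uniqueness reduces to the condition that $F(r(q),\pmb{\theta})=F(r(q),\pmb{\theta}_0)$ on the countable grid $\{r(q):q\geq 0\}$ forces $\pmb{\theta}=\pmb{\theta}_0$, an identifiability hypothesis that the paper's Lemma~\ref{lem:unique} relies on implicitly (its proof verifies only $\pmb{\Psi}(\pmb{\theta}_0)=\pmb{0}$) and that is not literally implied by A1--A3. The paper's route, in turn, reuses the score machinery needed anyway for Theorem~\ref{theorem2}. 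Two points you should make explicit to close your argument: first, integrability of $m_{\pmb{\theta}}(Q_0,Y_1)$ (needed for the ergodic theorem and for $\ell$ to be finite) is not automatic because $\log p(q,\pmb{\theta})\to-\infty$ as $q\to\infty$; it does hold, since at $\pmb{\theta}_0$ the conditional expectation is $p\log(1/p)\leq e^{-1}$ plus a bounded term, and your mean-value Lipschitz bound $c\,H(q)\,\|\pmb{\theta}-\pmb{\theta}_0\|$ with $\pmb{\Theta}$ bounded transfers this to all $\pmb{\theta}$. Second, the ergodicity of $(Q_{i-1},Y_i)$, which you assert from $\lambda_q\to 0$, is what the paper establishes carefully in Lemma~\ref{lem:ergodic} via the Karlin--McGregor conditions.
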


\begin{proof}

To show consistency we verify the sufficient conditions given in \cite[Thm.~5.9]{book_vdV1998}.  In particular, we need to show that 
\begin{equation}\label{eq:unique_theta0}
\inf_{\pmb{\theta}:d(\pmb{\theta},\pmb{\theta}_0)\geq \epsilon}||\pmb{\Psi}(\pmb{\theta})||>0=||\pmb{\Psi}(\pmb{\theta}_0)||,
\end{equation}
and that the class of functions $\{\pmb{\Psi}_k(\pmb{\theta}): \ \pmb{\theta}\in\pmb{\Theta}\}$ is Glinvenko-Cantelli (in the strong sense);
\begin{equation}\label{eq:uniform}
\sup_{\pmb{\theta}\in\pmb{\Theta}}\left|\left|\pmb{\Psi}_k(\pmb{\pmb{\theta}})-\pmb{\Psi}(\pmb{\pmb{\theta}})\right|\right| \overset{\mathrm{a.s.}}{\longrightarrow} 0.
\end{equation}
Then, by \cite[Thm.~5.9]{book_vdV1998}, any sequence of estimators such that 
\begin{equation}\label{eq:root}
\pmb{\Psi}_k(\hat{\pmb{\theta}}_k)\overset{P}{\longrightarrow}\pmb{0},
\end{equation}
 is consistent. The proof relies on a sequence of lemmas.  First of all, Lemma~\ref{lem:continuity} establishes the continuity of $\pmb{\psi}$ under Assumption~A2.  Lemma~\ref{lem:bound} shows that $\pmb{\psi}$ is bounded by the integrable function $H$ in Assumption~A3 (up to a multiplicative constant).  Note that we are only interested in values of $q$ such that $F(r(q),\pmb{\theta})\in(0,1)$ because our effective observation set $M$ only consists of such values.  Lemma~\ref{lem:ergodic} shows that the sequence $(\pmb{\Psi}_k(\pmb{\pmb{\theta}}))_{k\geq 1}$ has an ergodic limit for any $\pmb{\theta}\in\pmb{\Theta}$. Lemma~\ref{lem:unique} builds on the previous results to verify \eqref{eq:unique_theta0}. By Assumption~A1, $\pmb{\Theta}$ is compact, and Lemmas~\ref{lem:continuity},\ref{lem:bound} and \ref{lem:ergodic} verify that the sequence of functions $\{\pmb{\Psi}_k(\pmb{\theta}): \ \pmb{\theta}\in\pmb{\Theta}\}$ is continuous, bounded by an integrable function and pointwise converegent, thus \eqref{eq:uniform} holds (e.g.,  \cite[Thm.~16a]{book_F1996}). Finally, Lemma~\ref{lem:root} verifies \eqref{eq:root}. The proofs of all of the lemmas are provided in Appendix~\ref{appendix-1.-proof-of-lemmas-1-5.}.

\begin{lemma}\label{lem:continuity}
If $F(r(q),\pmb{\theta})$ is continuous, twice differentiable and has a continuous first derivative, w.r.t. $\pmb{\theta}\in \pmb{\Theta}$ (Assumption~A2), then  $\pmb{\psi}_i(q,y,\pmb{\theta})$ is continuous  w.r.t. $\pmb{\theta}\in \pmb{\Theta}$, for any $(q,y)$ such that $F(r(q),\pmb{\theta})\in(0,1)$.
\end{lemma}

\begin{lemma}\label{lem:bound}
Suppose Assumption A3 holds, then
\begin{align*}
\max_{j=1\ldots,n}\left|\pmb{\Psi}_k(\pmb{\pmb{\theta}})\right|_j\leq 2 H(q)\ , \forall q: F(r(q),\pmb{\theta})\in(0,1),
\end{align*}
 where $H$ is the integrable function in Assumption~A3.
\end{lemma}

\begin{lemma}\label{lem:ergodic}
A unique stationary distribution $(Q_0,Y_1)$ exists and for any integrable function $g$ such that $\mathbb{E}[|g(Q_0,Y_1)|]<\infty$,
$$\frac{1}{k}\sum_{i\in M} g(Q_{i-1},Y_{i})\overset{a.s.}{\longrightarrow}\mathbb{E}[g(Q_0,Y_1)]\:\: \text{as} \:\:k\rightarrow\infty.$$
If Assumption A3\: holds then in particular
$$\pmb{\Psi}_k(\pmb{\theta})\overset{a.s.}{\longrightarrow}\pmb{\Psi}(\pmb{\theta})\:\: \text{as }\: k\rightarrow\infty.$$
\end{lemma}

\begin{lemma}\label{lem:unique}
If Assumption A3 holds, then for any $\epsilon>0$.
$$\inf_{\pmb{\theta}:d(\pmb{\theta},\pmb{\theta}_0)\geq \epsilon}||\pmb{\Psi}(\pmb{\theta})||>0=||\pmb{\Psi}(\pmb{\theta}_0)||.$$
\end{lemma}

\begin{lemma}\label{lem:root}
If Assumptions A1, A2, A3 hold, then
$$\pmb{\Psi}_k(\hat{\pmb{\theta}}_k)\overset{a.s.}{\longrightarrow}\pmb{0}\:$$
and consequently
$$\pmb{\Psi}_k(\hat{\pmb{\theta}}_k)\overset{P}{\longrightarrow}\pmb{0}\:.$$
\end{lemma}

This completes the proof of Theorem~\ref{theorem1}. 
\end{proof}

\subsection{Asymptotic normality.}\label{sec:asymptotic-normality.}

We next focus on the asymptotic distribution of the estimation error. Specifically, we will show that under Assumptions A1-A4, the $\sqrt{k}$-scaled estimation errors converge in distribution to a normally distributed random variable with a computable covariance matrix.

\begin{theorem}\label{theorem2}
If Assumptions A1-A4\: hold and $Q_0$ is stationary, then
$$\sqrt{k}(\hat{\pmb{\theta}}_k-\pmb{\theta}_0)\overset{d}{\longrightarrow}N_n\left(\pmb{0},-\pmb{\nabla}\pmb{\Psi}^{-1}(\pmb{\theta}_0)\right),$$
where
\begin{align*}
(\pmb{\nabla}\pmb{\Psi}(\pmb{\theta}_0))_{jl}=-\Sigma_{jl}=\mathbb{E}\left[ \frac{\mu\lambda F'_{\theta_j}(r(Q_0),\pmb{\theta}_0)F'_{\theta_l}(r(Q_0),\pmb{\theta}_0)}{(1-F(r(Q_0),\pmb{\theta}_0))(\mu+\lambda(1-F(r(Q_0),\pmb{\theta}_0)))^2} \right] , \ j,l\in\{1,\ldots,n\}.
\end{align*}
\end{theorem}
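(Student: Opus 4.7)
The plan is to follow the standard M-estimator strategy: a one-term Taylor expansion of $\pmb{\Psi}_k$ around $\pmb{\theta}_0$, evaluated at $\hat{\pmb{\theta}}_k$. Since Lemma~\ref{lem:root} gives $\pmb{\Psi}_k(\hat{\pmb{\theta}}_k)\to \pmb{0}$ and Assumption~A2 makes $\pmb{\psi}$ twice continuously differentiable wherever $F\in(0,1)$, a row-wise mean-value theorem yields
\begin{equation*}
\pmb{0}=\pmb{\Psi}_k(\hat{\pmb{\theta}}_k)=\pmb{\Psi}_k(\pmb{\theta}_0)+\mathbf{A}_k\,(\hat{\pmb{\theta}}_k-\pmb{\theta}_0),
\end{equation*}
where $\mathbf{A}_k$ is the matrix whose $j$-th row is $\nabla \Psi_k^{j}(\tilde{\pmb{\theta}}_{j,k})^T$ for some $\tilde{\pmb{\theta}}_{j,k}$ on the segment between $\hat{\pmb{\theta}}_k$ and $\pmb{\theta}_0$. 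Rearranging gives
\begin{equation*}
\sqrt{k}\,(\hat{\pmb{\theta}}_k-\pmb{\theta}_0)=-\mathbf{A}_k^{-1}\cdot\sqrt{k}\,\pmb{\Psi}_k(\pmb{\theta}_0),
\end{equation*}
and the whole proof reduces to identifying the limits of the two factors and applying Slutsky.

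For the Hessian factor, I would first verify the information identity at $\pmb{\theta}_0$. A direct calculation using $Y_i\mid Q_{i-1}\sim\mathrm{Ber}(p(Q_{i-1},\pmb{\theta}_0))$ and the Bernoulli-score identity $\mathbb{E}[\partial_{\theta_l}\psi^j\mid Q_0]=-\mathbb{E}[\psi^j\psi^l\mid Q_0]$ gives, after substituting $p'_{\theta_j}=-\lambda\mu F'_{\theta_j}/(\mu+\lambda(1-F))^2$ and $p(1-p)=\mu\lambda(1-F)/(\mu+\lambda(1-F))^2$,
\begin{equation*}
(\nabla\pmb{\Psi}(\pmb{\theta}_0))_{jl}=-\mathbb{E}\left[\frac{\mu\lambda F'_{\theta_j}(r(Q_0),\pmb{\theta}_0)F'_{\theta_l}(r(Q_0),\pmb{\theta}_0)}{(1-F(r(Q_0),\pmb{\theta}_0))(\mu+\lambda(1-F(r(Q_0),\pmb{\theta}_0)))^2}\right]=-\Sigma_{jl}.
\end{equation*}
Invertibility of this limit is precisely Assumption~A4. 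To upgrade this pointwise identity to $\mathbf{A}_k\overset{p}{\to}-\pmb{\Sigma}$, I would combine Theorem~\ref{theorem1} (so that each intermediate point $\tilde{\pmb{\theta}}_{j,k}\overset{p}{\to}\pmb{\theta}_0$) with a uniform Glivenko--Cantelli bound on $\nabla\pmb{\Psi}_k(\pmb{\theta})$ over a neighborhood of $\pmb{\theta}_0$, established by the same template as Lemmas~\ref{lem:bound}--\ref{lem:ergodic} but applied to $\nabla\pmb{\psi}$, which requires producing an integrable envelope analogous to the one in A3.

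For the score factor, the key structural observation is that $\pmb{\psi}(Q_{i-1},Y_i,\pmb{\theta}_0)$ is a martingale difference with respect to the natural filtration $\mathcal{F}_i=\sigma(Q_0,Y_1,\ldots,Y_i)$, since
\begin{equation*}
\mathbb{E}[\psi^j(Q_{i-1},Y_i,\pmb{\theta}_0)\mid\mathcal{F}_{i-1}]=p(Q_{i-1},\pmb{\theta}_0)\cdot\frac{p'_{\theta_j}}{p}-(1-p(Q_{i-1},\pmb{\theta}_0))\cdot\frac{p'_{\theta_j}}{1-p}=0.
\end{equation*}
I would then invoke a martingale CLT for stationary ergodic sequences for $\sqrt{k}\,\pmb{\Psi}_k(\pmb{\theta}_0)=k^{-1/2}\sum_{i\in M}\pmb{\psi}$. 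Two conditions must be checked: (i) the conditional quadratic variation converges to $\pmb{\Sigma}$, which follows from Lemma~\ref{lem:ergodic} together with the per-step identity $\mathbb{E}[\pmb{\psi}\pmb{\psi}^T\mid Q_{i-1}]=\pmb{\Sigma}(Q_{i-1})$ whose stationary mean is $\pmb{\Sigma}$; and (ii) a conditional Lindeberg condition, which is implied by the finite second moments in A4 together with the envelope from Lemma~\ref{lem:bound}. This yields $\sqrt{k}\,\pmb{\Psi}_k(\pmb{\theta}_0)\overset{d}{\to}N_n(\pmb{0},\pmb{\Sigma})$, and Slutsky concludes
\begin{equation*}
\sqrt{k}\,(\hat{\pmb{\theta}}_k-\pmb{\theta}_0)\overset{d}{\to}\pmb{\Sigma}^{-1}N_n(\pmb{0},\pmb{\Sigma})=N_n(\pmb{0},\pmb{\Sigma}^{-1})=N_n(\pmb{0},-\nabla\pmb{\Psi}^{-1}(\pmb{\theta}_0)).
\end{equation*}

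The main obstacle I anticipate is the uniform convergence of $\nabla\pmb{\Psi}_k$ near $\pmb{\theta}_0$: Assumptions A1--A4 as stated control first-order quantities cleanly, but $\nabla^2 \log \mathcal{L}$ involves $F''$ and a denominator $(1-F)^2$, so producing an integrable envelope for the Hessian will require either a mild strengthening of A3 or a localization argument exploiting that consistency places all $\tilde{\pmb{\theta}}_{j,k}$ in a shrinking neighborhood of $\pmb{\theta}_0$ on which continuity in $\pmb{\theta}$ provides the required bounds. By contrast, the martingale-difference structure and the verification of the Lindeberg condition are routine once A4 is in place.
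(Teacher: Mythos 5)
Your proposal follows essentially the same route as the paper's proof: a mean-value expansion of $\pmb{\Psi}_k$ at $\hat{\pmb{\theta}}_k$ around $\pmb{\theta}_0$, a stationary-ergodic martingale CLT for $\sqrt{k}\,\pmb{\Psi}_k(\pmb{\theta}_0)$, the information identity $\nabla\pmb{\Psi}(\pmb{\theta}_0)=-\pmb{\Sigma}$ established via the Bernoulli conditional law and the law of total expectation, and Slutsky. The one obstacle you flag---uniform control of $\nabla\pmb{\Psi}_k$ in a neighborhood of $\pmb{\theta}_0$ so that the mean-value intermediate point can be replaced by $\pmb{\theta}_0$---is a genuine issue, but the paper does not resolve it either: its Lemma~\ref{lem:var} only proves convergence of $\nabla\pmb{\Psi}_k$ at $\pmb{\theta}_0$ itself, and the substitution of the intermediate point is made without the localization or envelope argument you correctly identify as necessary.
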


\begin{proof} By the convexity of $\Theta$ in Assumption ~A1, for any interior point \(\hat{\pmb{\theta}}_k\in\pmb{\Theta}^\mathrm{o}\), by the Mean Value Theorem there exists a value \(t\in [0, 1]\) such that
\[
\pmb{\Psi}_k(\hat{\pmb{\theta}}_k)=\pmb{\Psi}_k(\pmb{\theta}_0)+\pmb{\nabla}\pmb{\Psi}_k(\pmb{\theta}_0+t(\hat{\pmb{\theta}}_k-\pmb{\theta}_0))(\hat{\pmb{\theta}}_k-\pmb{\theta}_0),
\]
where \(\pmb{\nabla}\pmb{\Psi}_k\) is the Jacobian of $\pmb{\Psi}_k(\pmb{\theta})$. By Theorem~\ref{theorem1},
\(\hat{\pmb{\theta}}_k\overset{p}{\rightarrow}\pmb{\theta}_0\). As a
result
\(t(\hat{\pmb{\theta}}_k-\pmb{\theta}_0)k\overset{p}{\rightarrow}0\)
which also implies convergence in distribution. In addition, by Lemma
\ref{lem:root},  there exists some large $K$ such that \(\pmb{\Psi}_k(\hat{\pmb{\theta}}_k)=\pmb{0}\) for all \(k>K\), almost surely.  Hence , for \(k>K\) 
\[
\pmb{0}=\pmb{\Psi}_k(\pmb{\theta}_0)+\pmb{\nabla}\pmb{\Psi}_k(\pmb{\theta}_0)(\hat{\pmb{\theta}}_k-\pmb{\theta}_0),
\]
which implies that the right hand-side converges to a vector of zeroes in distribution.  For now we assume that \(\pmb{\nabla}\pmb{\Psi}_k(\pmb{\theta}_0)\) is invertible (later this will shown to be true under Assumption~A4), yielding
\[
\sqrt{k}(\hat{\pmb{\theta}}_k-\pmb{\theta}_0)=-\sqrt{k}\pmb{\nabla}\pmb{\Psi}_k^{-1}(\pmb{\theta}_0)\pmb{\Psi}_k(\pmb{\theta}_0),
\]
Now, suppose that for some covariance matrix $\pmb{\Sigma}$,
\begin{align}\label{eq:normCond1}
\sqrt{k}\pmb{\Psi}_k(\pmb{\theta}_0)\overset{d}{\longrightarrow}N_n(\pmb{0},\pmb{\Sigma}),
\end{align}
and
\begin{align}\label{eq:normCond2}
\pmb{\nabla}\pmb{\Psi}_k^{-1}(\pmb{\theta}_0)\overset{\mathrm{p}}{\longrightarrow}\pmb{\nabla}\pmb{\Psi}^{-1}(\pmb{\theta}_0),
\end{align}
then by Sluzky's Theorem
\[
\sqrt{k}(\hat{\pmb{\theta}}_k-\pmb{\theta}_0)\overset{d}{\longrightarrow}-\pmb{\nabla}\pmb{\Psi}^{-1}(\pmb{\theta}_0)N_n(\pmb{0},\pmb{\Sigma})=N_n(\pmb{0},\pmb{\nabla}\pmb{\Psi}^{-1}(\pmb{\theta}_0)\pmb{\Sigma}(\pmb{\nabla}\pmb{\Psi}^{-1}(\pmb{\theta}_0))^T)
\]
Lemma~\ref{lem:var} verifies \eqref{eq:normCond2} and that \(\pmb{\nabla}\pmb{\Psi}(\pmb{\theta}_0)=-\pmb{\Sigma}\), where $\pmb{\Sigma}$ is as defined in Assumption~A4.  Then, Lemma~\ref{lem:normal} establishes \eqref{eq:normCond1} by applying a martingale CLT.   Combining these results we have that
\[
\pmb{\nabla}\pmb{\Psi}^{-1}(\pmb{\theta}_0)\pmb{\Sigma}(\pmb{\nabla}\pmb{\Psi}^{-1}(\pmb{\theta}_0))^T=
\pmb{\nabla}\pmb{\Psi}^{-1}(\pmb{\theta}_0)\pmb{\nabla}\pmb{\Psi}(\pmb{\theta}_0)(\pmb{\nabla}\pmb{\Psi}^{-1}(\pmb{\theta}_0))^T=(\pmb{\nabla}\pmb{\Psi}^{-1}(\pmb{\theta}_0))^T,
\]
and using the fact that \(\pmb{\nabla}\pmb{\Psi}(\pmb{\theta}_0)\) is symmetric, since it is a Hessian matrix, we  conclude that
\[
(\pmb{\nabla}\pmb{\Psi}^{-1}(\pmb{\theta}_0))^T=\pmb{\nabla}\pmb{\Psi}^{-1}(\pmb{\theta}_0),
\]
 thus completing the proof of Theorem~\ref{theorem2}.  

\end{proof}

\begin{lemma}\label{lem:var}
If Assumptions  A1-A4 hold, then
\begin{align*}
\pmb{\nabla}\pmb{\Psi}_k^{-1}(\pmb{\theta}_0)\overset{\mathrm{a.s.}}{\longrightarrow}\pmb{\nabla}\pmb{\Psi}^{-1}(\pmb{\theta}_0)\ ,
\end{align*}
where
\begin{align*}
-\pmb{\nabla}\pmb{\Psi}(\pmb{\theta}_0)=\mathbb{E}[\pmb{\psi}(Q_0,Y_1,\pmb{\theta}_0)\pmb{\psi}(Q_0,Y_1,\pmb{\theta}_0)^T]=\pmb{\Sigma}.
\end{align*}
\end{lemma}
The proof of Lemma~\ref{lem:var} is given in Appendix~\ref{appendix-normal}.

\begin{lemma}\label{lem:normal}
If Assumptions  A1-A4 hold, then
$$\sqrt{k}\pmb{\Psi}_k(\pmb{\theta}_0)\overset{d}{\longrightarrow}N_n(\pmb{0},\pmb{\Sigma})$$
\end{lemma}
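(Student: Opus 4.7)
The key observation is that $\pmb{\psi}(Q_{i-1}, Y_i, \pmb{\theta}_0)$ forms a martingale difference sequence with respect to the natural filtration $\mathcal{F}_i := \sigma(Q_0, Y_1, \ldots, Q_i)$. Since $Y_i \mid Q_{i-1}$ is Bernoulli with parameter $p(Q_{i-1}, \pmb{\theta}_0)$, each coordinate may be rewritten compactly as $\psi^j = (Y_i - p(Q_{i-1}, \pmb{\theta}_0))\, p'_{\theta_j}(Q_{i-1}, \pmb{\theta}_0)/[p(1-p)]$, which manifestly has conditional mean zero. Using the explicit expressions $p(1-p) = \lambda_q \mu/(\lambda_q+\mu)^2$ and $p'_{\theta_j} = -\lambda \mu F'_{\theta_j}/(\lambda_q+\mu)^2$ that follow from $p(q,\pmb{\theta}) = \lambda_q/(\lambda_q + \mu)$, a short calculation yields
\[
\mathbb{E}[\psi^j \psi^l \mid \mathcal{F}_{i-1}] = \frac{\mu \lambda\, F'_{\theta_j}(r(Q_{i-1}), \pmb{\theta}_0)\, F'_{\theta_l}(r(Q_{i-1}), \pmb{\theta}_0)}{(1-F(r(Q_{i-1}), \pmb{\theta}_0))(\mu + \lambda(1-F(r(Q_{i-1}), \pmb{\theta}_0)))^2},
\]
whose stationary expectation is precisely $\Sigma_{jl}$ from Assumption~A4.

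The plan is to invoke a multivariate martingale CLT via the Cram\'er--Wold device: for any fixed $\pmb{c} \in \mathbb{R}^n$, the scalar sequence $\xi_i := \pmb{c}^T \pmb{\psi}(Q_{i-1}, Y_i, \pmb{\theta}_0)$ is a scalar martingale difference, and a standard scalar martingale CLT reduces the problem to two conditions. \textbf{(i) Conditional variance convergence:} $\tfrac{1}{k}\sum_{i \in M} \mathbb{E}[\xi_i^2 \mid \mathcal{F}_{i-1}] \overset{p}{\longrightarrow} \pmb{c}^T \pmb{\Sigma} \pmb{c}$. This is an immediate application of Lemma~\ref{lem:ergodic} to the function $q \mapsto \pmb{c}^T \mathbb{E}[\pmb{\psi}\pmb{\psi}^T \mid Q_0 = q]\pmb{c}$, whose stationary mean is $\pmb{c}^T \pmb{\Sigma} \pmb{c} < \infty$ by Assumption~A4. \textbf{(ii) Lindeberg condition:} $\tfrac{1}{k}\sum_{i \in M}\mathbb{E}[\xi_i^2 \mathbf{1}(|\xi_i| > \epsilon \sqrt{k}) \mid \mathcal{F}_{i-1}] \overset{p}{\longrightarrow} 0$ for every $\epsilon > 0$. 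Setting $h_k(q) := \mathbb{E}[\xi_1^2 \mathbf{1}(|\xi_1| > \epsilon\sqrt{k}) \mid Q_0 = q]$, stationarity combined with Markov's inequality reduces the problem to showing $\mathbb{E}[h_k(Q_0)] \to 0$; since $h_k(q) \leq \pmb{c}^T \mathbb{E}[\pmb{\psi}\pmb{\psi}^T \mid Q_0 = q] \pmb{c}$ is integrable by A4 and $h_k(q) \to 0$ pointwise as $k\to\infty$, Dominated Convergence closes the argument.

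The scalar martingale CLT then yields $\pmb{c}^T \sqrt{k}\,\pmb{\Psi}_k(\pmb{\theta}_0) \overset{d}{\longrightarrow} N(0, \pmb{c}^T \pmb{\Sigma} \pmb{c})$ for every $\pmb{c}$, and Cram\'er--Wold delivers the multivariate conclusion.

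The main obstacle I anticipate is the Lindeberg verification: the pointwise bound $\|\pmb{\psi}\| \leq 2\sqrt{n}\,H(Q_{i-1})$ from Lemma~\ref{lem:bound} only yields $L^1$-integrability of $H$, so squaring leads out of the integrability regime provided by Assumption~A3. The resolution is precisely Assumption~A4, which provides $L^2$-integrability of $\pmb{\psi}$ under the stationary law---A4 is in essence the minimal extra ingredient beyond the consistency assumptions needed for asymptotic normality. A secondary bookkeeping point concerns the restriction to the effective sample $M$ rather than all $k$ indices, but since $|M|/k$ converges almost surely to $\mathbb{P}(F(r(Q_0), \pmb{\theta}_0) \in (0,1)) > 0$ by ergodicity, this discrepancy is harmless and absorbed by Slutsky.
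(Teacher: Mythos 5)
Your proof is correct, and its skeleton matches the paper's: identify $\pmb{\psi}(Q_{i-1},Y_i,\pmb{\theta}_0)$ as a stationary martingale difference sequence, compute its conditional covariance (which matches $\Sigma_{jl}$ exactly --- your compact rewriting $\psi^j=(Y_i-p)p'_{\theta_j}/[p(1-p)]$ is a cleaner way to see both the mean-zero property and the covariance than the paper's case-by-case computation), and pass to $\mathbb{R}^n$ by Cram\'er--Wold. Where you diverge is in the choice of scalar CLT. The paper invokes the martingale CLT for \emph{stationary ergodic} martingale difference sequences (Billingsley--Ibragimov type, cited as \cite[Corr.~4.17]{vdV2010}), whose only hypotheses are stationarity, ergodicity, the martingale property, and a finite second moment --- so after Lemmas~\ref{lem:ergodic} and \ref{lem:var} and Assumption~A4 there is nothing left to check; no Lindeberg condition appears. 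You instead invoke the general triangular-array martingale CLT, which forces you to verify the conditional variance convergence and the conditional Lindeberg condition by hand. Your verification of both is sound: the variance convergence is indeed Lemma~\ref{lem:ergodic} applied to $q\mapsto \pmb{c}^T\mathbb{E}[\pmb{\psi}\pmb{\psi}^T\mid Q_0=q]\pmb{c}$, and the Lindeberg check via stationarity, Markov's inequality, and dominated convergence (using that $h_k(q)\to 0$ pointwise since $\xi_1$ given $Q_0=q$ is bounded, dominated by the A4-integrable conditional second moment) is correct. The trade-off: your route is more self-contained and does not require the stationary-ergodic CLT as a black box, at the cost of the extra Lindeberg step; the paper's route buys that step for free from stationarity. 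Your closing observations --- that A4 is precisely the $L^2$ upgrade over the $L^1$ bound of A3, and that the $M$-versus-$k$ indexing is absorbed by ergodicity --- are both accurate and consistent with how the paper handles these points.
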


\begin{proof}
To prove Lemma \ref{lem:normal} we apply a martingale CLT for stationary ergodic sequences (e.g., \cite[Corr.~4.17]{vdV2010}).  Note that the result is typically stated for one dimensional sequences, but the extension to $\mathbb{R}^n$ is immediate by the Cramer-Wold device (see \cite[Ch.~2]{book_vdV1998}). Specifically, if \(\{\pmb{\psi}_{i}\}_{i=0}^{k-1}\) is a stationary, ergodic, martingale difference with respect to the filtration
\(\{\mathcal{F}_{i}\}_{i=0}^{k-1}\) such that
\(\mathbb{E}\left[\pmb{\psi}_0\pmb{\psi}_0^T\right]=\pmb{\Sigma}<\infty\)
and \(\mathbb{E}(\pmb{\psi}_{i}|\mathcal{F}_{i-1})=\pmb{0}\). Then
\[
\sqrt{k}\pmb{\Psi}_k\overset{d}{\longrightarrow}N(\pmb{0},\pmb{\Sigma}),
\]
where \(\sqrt{k}\pmb{\Psi}_k=\frac{1}{\sqrt{k}}\sum_{i=1}^k\pmb{\psi}_{i}\). We verify that the sequence $(\pmb{\Psi}_k(\pmb{\theta}_0))_{k\geq q}$ satisfies these conditions in the following four steps.

\noindent\textbf{Part 1: Filtration and
adaptation}\label{part-1-filtration-and-adaptation}
We assert that \(\pmb{\psi}_i\) is adapted (i.e.~measurable
w.r.t.) to the increasing filtration
\(\{\mathcal{F}_{i}\}_{i=0}^{n-1}\). As follows from \cite[Thm. 2.1.5]{bogachev_measure_2007}), if the function \(f\) is measurable with respect to a \(\sigma\)-algebra \(\mathcal{A}\) then the function \(\varphi \circ f\)
is measurable with respect to \(\mathcal{A}\) for any Borel function
\(\varphi:\mathbb{R}^1\rightarrow \mathbb{R}^1\). In Appendix~\ref{appendix-filtration} a construction of the sequence
\((Q_i)_{0\leq i\leq k}:(\Omega_k,\mathcal{F}_i,\mathrm{P}_i)\rightarrow(\mathbb{Z}^+,\{0,1,...,i\},F_i)\) is provided, and it is further argued that \(\pmb{\psi}_i(Q_{i-1},Y_i,\pmb{\theta})\) is measurable w.r.t.
\(\mathcal{F}_i\) for any $\pmb{\theta}\in\pmb{\Theta}$.

\noindent\textbf{Part 2: Ergodicity}\label{part-2-ergodicity}

Stationarity of \((Q_{i-1},Y_{i})\) implies stationarity of
\(\pmb{\psi}_i(Q_{i-1},Y_i,\pmb{\theta})\) as it is a measurable function of
\((Q_{i-1},Y_{i})\).  Thus,  by Lemma \ref{lem:ergodic} the sequence is also
ergodic.

\noindent\textbf{Part 3: Variance}\label{part-3-variance.}
In Lemma~\ref{lem:var} it was shown that the asymptotic variance of $\sqrt{k}\pmb{\Psi}_k(\pmb{\theta})$ is $\pmb{\Sigma}$, and by Assumption~A4 this is an invertible covariance matrix with finite entries.

\noindent\textbf{Part 4: Martingale}\label{part-4-martingale}
In the proof of Lemma \ref{lem:unique} (Appendix~\ref{appendix-1.-proof-of-lemmas-1-5.}) it is verified that
\[
\mathbb{E}(\psi_i^j(Q_{i-1},Y_i,\pmb{\theta}_0)|\mathcal{F}_{i-1})=\mathbb{E}\left[\left[ Y_i \frac{p'_{\theta_i}(Q_{i-1},\pmb{\theta}_0)}{p(Q_{i-1},\pmb{\theta})}
- (1-Y_i) \frac{p'_{\theta_i}(Q_{i-1},\pmb{\theta}_0)}{1-p(Q_{i-1},\pmb{\theta}_0)}\right]|Q_{i-1}\right]=0 ,
\]
where it is assumed that $Q_i\sim Q_0$ for all $i\geq 0$. Therefore, $\pmb{\Psi}_k(\pmb{\theta}_0)$ is indeed a martingale. 
\end{proof}

\section{Implementation}\label{sec:examples}

In this section we implement the MLE for two examples: exponential and hyperexponential service value distributions. In both cases the assumptions for the asymptotic guarantees are verified and numerical analysis is carried out on simulated data.

\subsection{Exponential service value.}\label{sec:exponential-service-value.}

Suppose that the service values are exponentially distributed; \(R\sim \exp(\theta)\) for some $\theta>0$.  Then,
\begin{align}\label{eq:exp_F}
F(r,\theta)=1-\exp(-\theta r)\ ,  r\geq 0 .
\end{align}
We will first verify that all of the assumptions required for our asymptotic results hold in this case. This will be followed by analysis of simulation experiments involving the estimation.

\textbf{Verification of
assumptions}\label{verification-of-assumptions}

Taking derivative of \eqref{eq:exp_F} yields
\begin{equation}\label{eq:exp_F_der_theta}
F_{\theta}'(r(q_i),\theta)=r(q_i)\exp(-\theta r(q_i)).
\end{equation}

The log-likelihood is then
\begin{align*}
    \log \mathcal{L}(\theta|Q_k)&= -\sum_{i\in K_1\cup K_2}\log\left\{\mu+\lambda\exp\left(-\theta\left( p+\frac{C(q_{i-1}+1)}{\mu}\right)\right) \right\}\\
    & \quad +
\sum_{i\in K_1}\log\left\{ \lambda\exp\left(-\theta\left( p+\frac{C(q_{i-1}+1)}{\mu}\right)\right) \right\}+
|K_2|\log\left\{ \mu \right\}.
\end{align*}

Yielding the estimating equation,
\[
\sum_{i\in K_1\cup K_2}\left(\frac{\lambda \left( p+\frac{C(q_{i-1}+1)}{\mu}\right)\exp\left(-\theta\left( p+\frac{C(q_{i-1}+1)}{\mu}\right)\right)}{\mu+\lambda\exp\left(-\theta\left( p+\frac{C(q_{i-1}+1)}{\mu}\right)\right)}\right)=
\sum_{i\in K_1}\left(p+\frac{C(q_{i-1}+1)}{\mu}\right).
\]

We now verify that Assumptions A1-A4 hold. Thus, by Theorem~\ref{theorem1} the estimator is consistent and by Theorem~\ref{theorem2}, the normalized estimation errors are asymptotically normal.

\begin{itemize}

\item A1. Suppose that $\theta_0 \in \Theta$ where $\Theta>0$ is a closed interval and, hence compact and convex.

\item A2.  Follows from the fact that \eqref{eq:exp_F} and\eqref{eq:exp_F_der_theta} are continuous in $\theta\in\Theta$.

\item A3. By \eqref{eq:exp_F_der_theta},
$$\left| \frac{-F_{\theta}'(r(q_i),\theta)}{1-F(r(q_i),\theta)} \right|=\left| \frac{-r(q_i)\exp(-\theta r(q_i))}{\exp(-\theta r(q_i))}  \right|=r(q_i),$$
which is a linear function w.r.t. $q$, and therefore it is integrable as $\mathbb{E}[Q_0]<\infty$.

\item A4. Since $n=1$, the inverse of the $\Sigma$  is simply the reciporal and we just need to verify the integrability condition.  For the exponential case we have that
\[
\left|\frac{\mu\lambda(F_{\theta}'(r(q_i),\theta))^2}{(1-F(r(q_i),\theta))(\mu+\lambda(1-F(r(q_i),\theta)))^2}\right|=
\frac{\mu\lambda r^2(q_i)\exp(-\theta r(q_i))}{(\mu+\exp(-\theta r(q_i)))^2}.
\]
Recall that $r(q)$ is a linear function, and as it is also continuous, it is bounded on any closed interval $[0,q]$. Therefore,
$$\lim_{q\rightarrow \infty}{\frac{\mu\lambda r^2(q_i)\exp(-\theta r(q_i))}{(\mu+\exp(-\theta r(q_i)))^2}}=
\frac{\lim_{q\rightarrow \infty}{\mu\lambda r^2(q_i)\exp(-\theta r(q_i))}}{\lim_{q\rightarrow \infty}{(\mu+\exp(-\theta r(q_i)))^2}}=\frac{\lambda}{\mu}\lim_{q\rightarrow \infty}{r^2(q_i)\exp(-\theta r(q_i))}=0$$
where we used the fact that  $exp(\theta r(q))>>r^2(q)$ for large $q$. By Lemma~\ref{lem:ergodic} we conclude that
\[
\mathbb{E}\left[\frac{\mu\lambda r^2(Q_0)\exp(-\theta r(Q_0))}{(\mu+\exp(-\theta r(Q_0)))^2}\right]<\infty ,
\]

\end{itemize}

\textbf{Simulation analysis.}\label{sec:simulation-analysis.}

Figure 2 illustrates the convergence of
\(\Psi_k(\hat{\theta}_k)\) to the true value \(\Psi(\theta_0)\)=0, a result that was theoretically verified in Lemmas \ref{lem:ergodic} and \ref{lem:unique}. The variance of \(\Psi(\theta)\) is quite large when \(k\) is small, however, it decreases fast as \(k\) grows. 

\begin{figure}[H]

{\centering \includegraphics[width=0.7\linewidth]{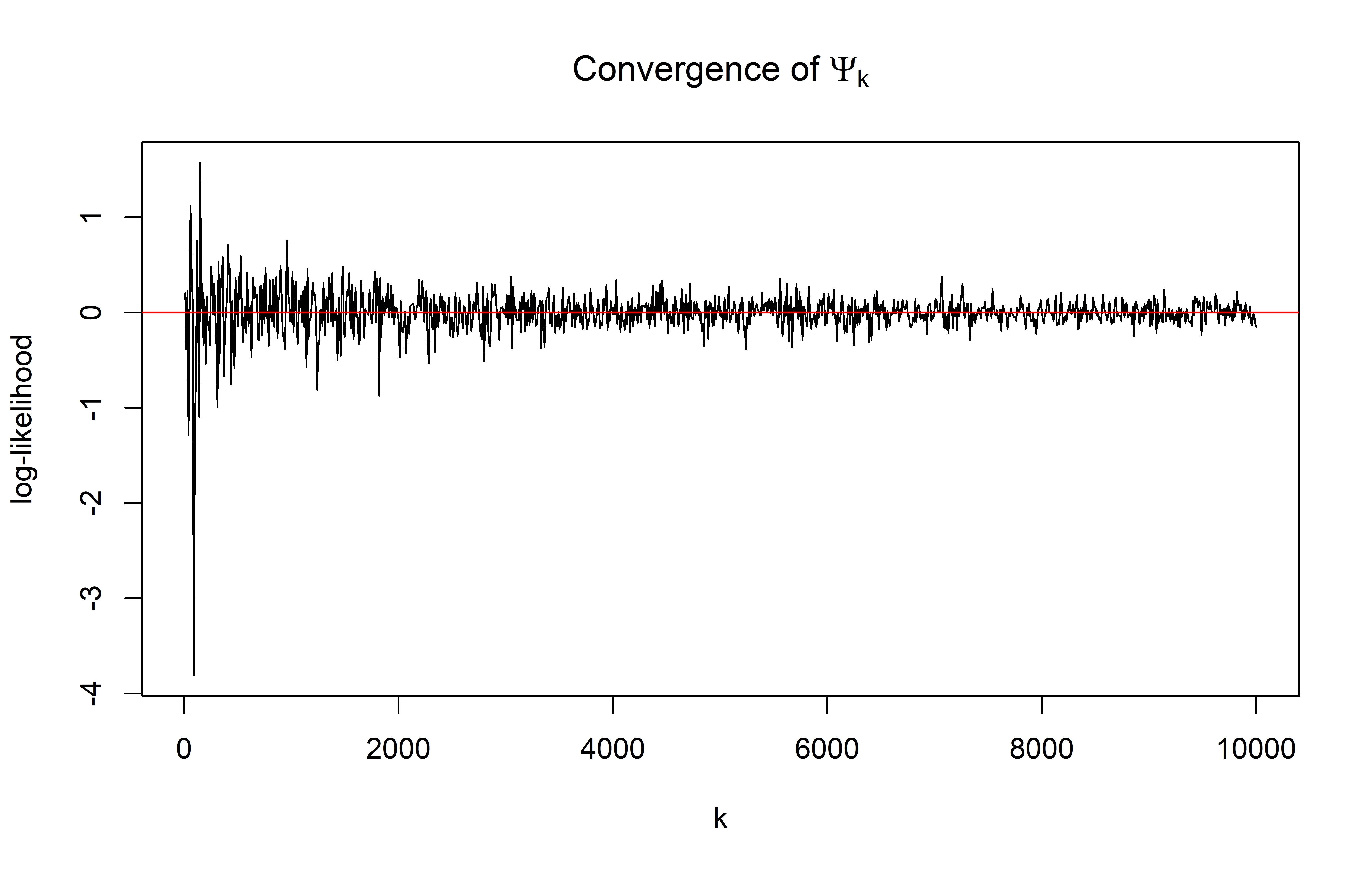} 
}

\caption{Convergence of the derivative of the log-likelihood function ($\Psi_k$). Simulation parameters: $\lambda=1,\: \mu=1,\: \theta=0.02,\: C=1,\: p=15$.}\label{fig:unnamed-chunk-5}
\end{figure}

\begin{figure}[H]

{\centering \includegraphics[width=0.7\linewidth]{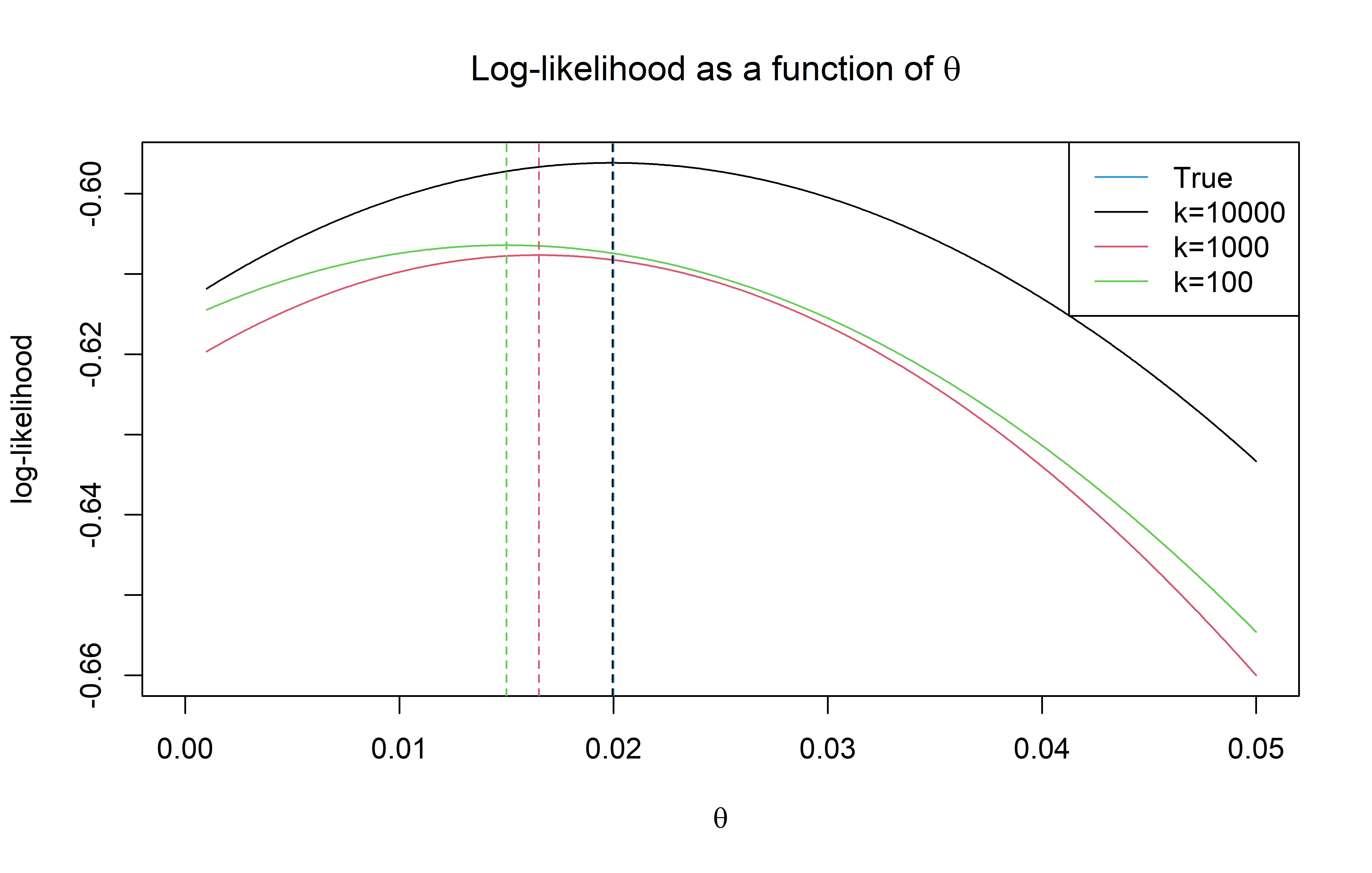} 

}

\caption{Log-likelihood as a function of $\theta$. Simulation parameters: $\lambda=1,\: \mu=1,\: C=1,\: p=15$,\: k=100/1000/10000.}\label{fig:unnamed-chunk-6}
\end{figure}

Figures 3 and 4 demonstrate the result of Theorems \ref{theorem1} and \ref{theorem2}. Namely, as \(k\) grows, the
estimated value of \(\theta\) approaches the true value with a decreasing variance. In Figure 3 the convergence of the log-likelihood and the MLE are displayed. Figure 4 further displays the variance of the estimation error in terms of a confidence interval around the point estimator calculated using the asymptotic variance. Notably, even for small \(k\), the estimated \(\theta\) is within a narrow confidence interval, although the log-likelihood function is still far from the stationary counterpart.

\begin{figure}[h]

{\centering \includegraphics[width=0.7\linewidth]{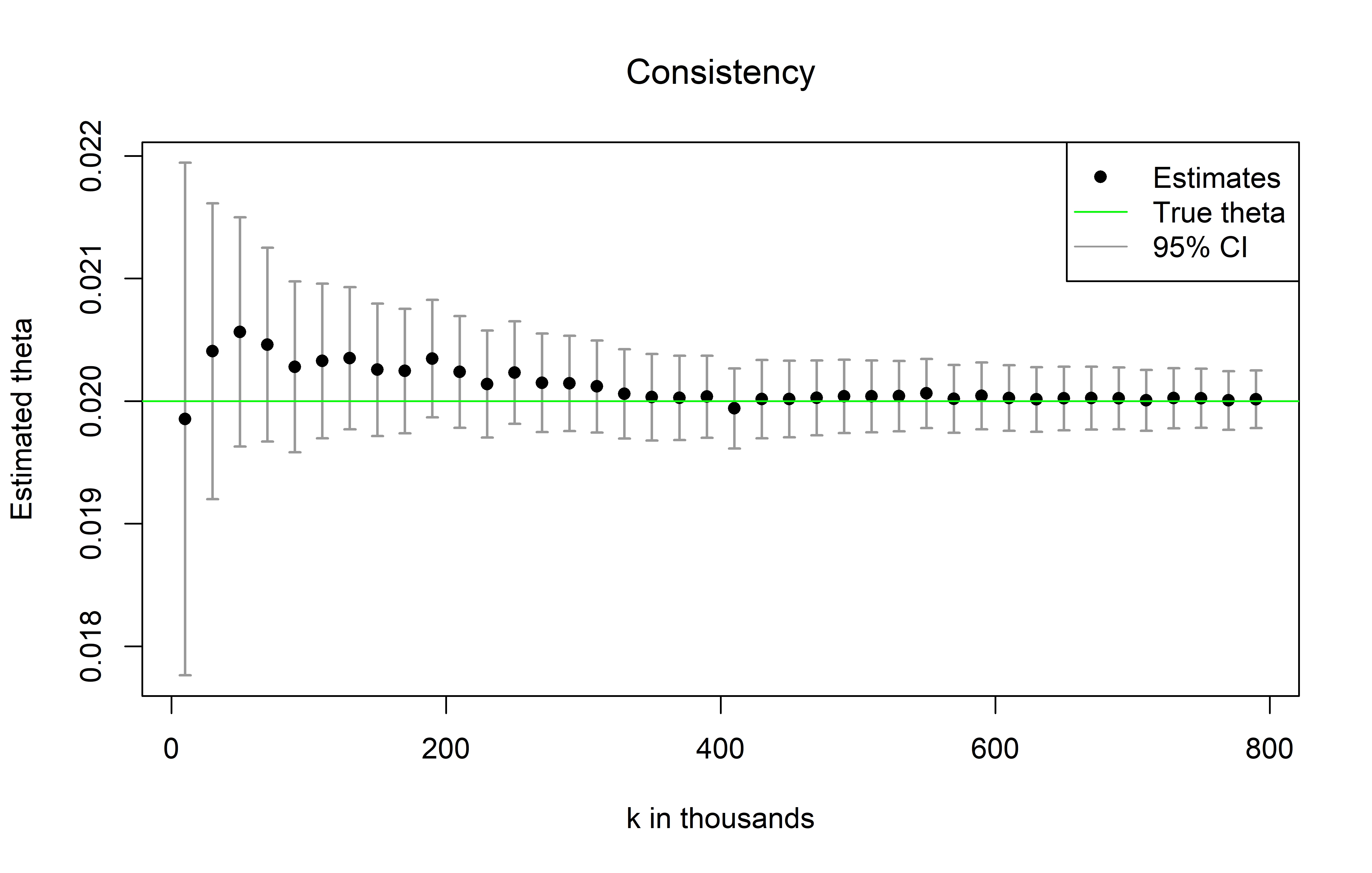}
}

\caption{Convergence of the estimated $\theta$ to the true value as $k$ grows. The confidence Interval is calculated using asymptotic variance in Theorem~\ref{theorem2}. Simulation parameters: $\lambda=1,\: \mu=1,\: \theta=0.02,\: C=1,\: p=15$.}\label{fig:unnamed-chunk-7}
\end{figure}

\begin{figure}[H]

{\centering \includegraphics[width=0.7\linewidth]{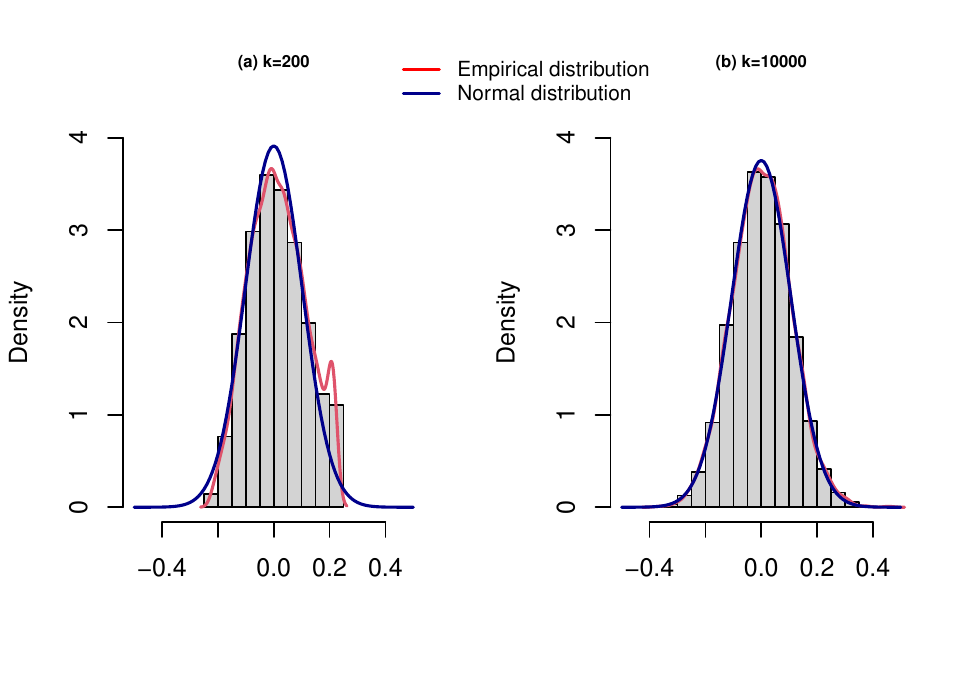} 

}

\caption{Histogram of normalized estimation errors. The red line stands for estimated density. The red line stands for normal distribution with a mean of 0 and a standard deviation the same as that of the estimated distribution. For k=200, the mean value is  0.015  and the p-value of the Shapiro-Wilk test is $\sim$ 0 . For k=10000, the mean value is  0.002  and the p-value of the Shapiro-Wilk test is  0.11. Simulation parameters: $\lambda=1,\: \mu=1,\: \theta=0.02,\: C=1,\: p=15$, repeats=5000.}\label{fig:unnamed-chunk-8}
\end{figure}

Figure 5 demonstrates the main result of Theorem \ref{theorem2}. When the sample is
small, the normalized estimation error distribution is not normal,
neither according to Shapiro-Wilk test nor visually. However, for
\(k=10000\), they are normally distributed around \(0\).

\begin{figure}[H]
\centering \includegraphics[width=0.7\linewidth]{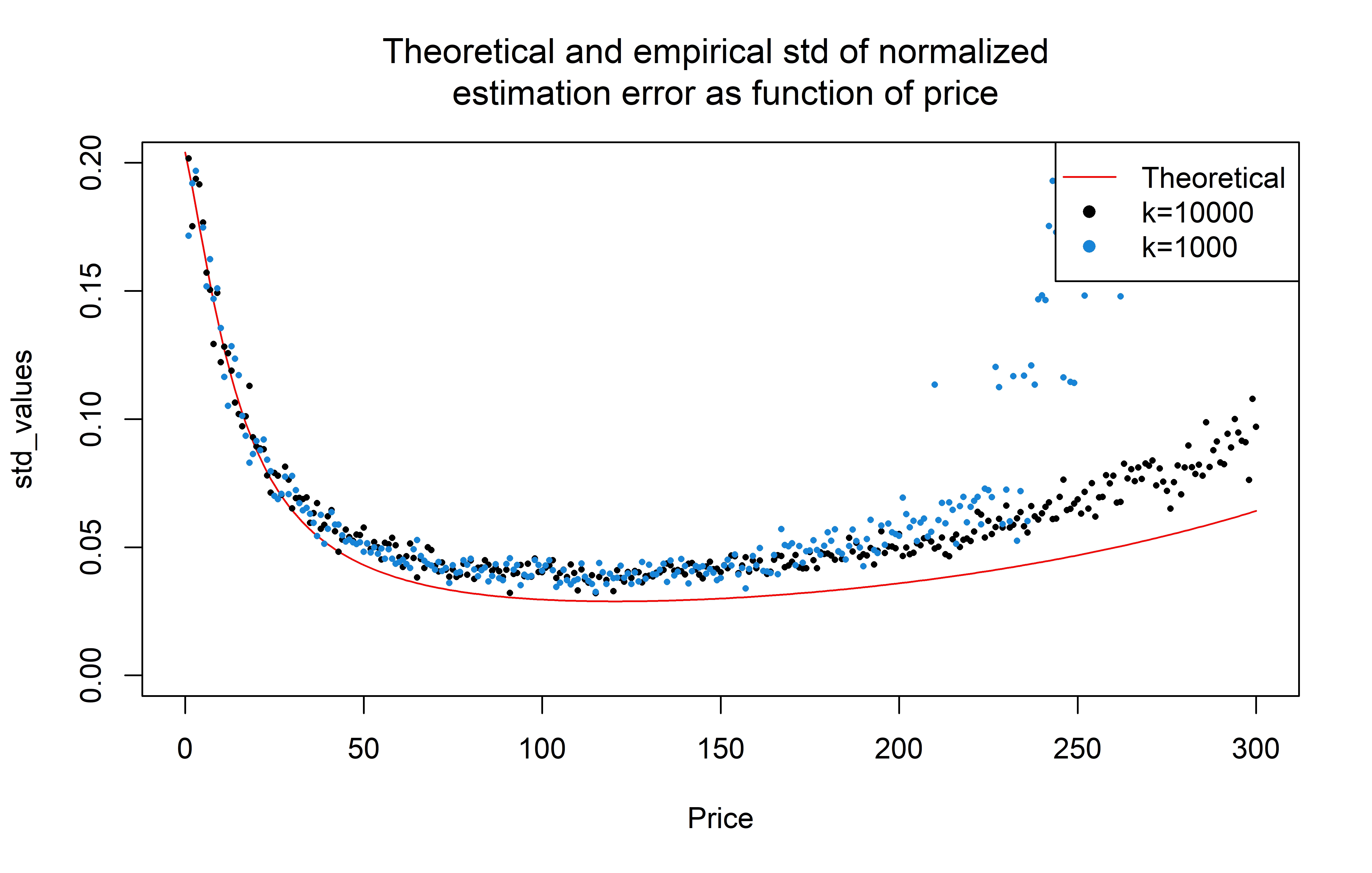} 
\caption{Theoretical and empirical std of normalized estimation error as a function of price. Simulation parameters: $\lambda=1,\: \mu=1,\: C=1,\: \theta=0.02$, \: k=1000/10000, repeats for each price.}\label{fig:unnamed-chunk-9}
\end{figure}

As was shown in Theorem \ref{theorem2}, the variance of asymptotic errors
distribution is
\begin{align*}
    \left(\mathbb{E}\left[\frac{\mu \lambda (F'_{\theta}(Q_0,\theta))^2 }{(1-F(Q_0,\theta))(\mu+\lambda(1-F(Q_0,\theta)))^2}\right]\right)^{-1}.
\end{align*}
The stationary distribution of \(Q_0\) can be calculated with
(\ref{prob_est}) and (\ref{xi}). The actual variance can be calculated
by simulation of \(k\) - steps queue, \(n\) - times. Finally, that can
be done for different prices. Figure 6 compares the theoretical std and
std from such simulation. The probable reason for the difference between
the empirical and theoretical std is that the standard deviation of the
empirical distribution is an upward-biased, but consistent estimator for
standard deviation. As the price grows, the effective sample size is
smaller for a fixed number of steps and estimation is more biased. As a
result, a higher number of steps lead to better estimation even for high
prices.

\begin{figure}[H]
{\centering \includegraphics[width=0.7\linewidth]{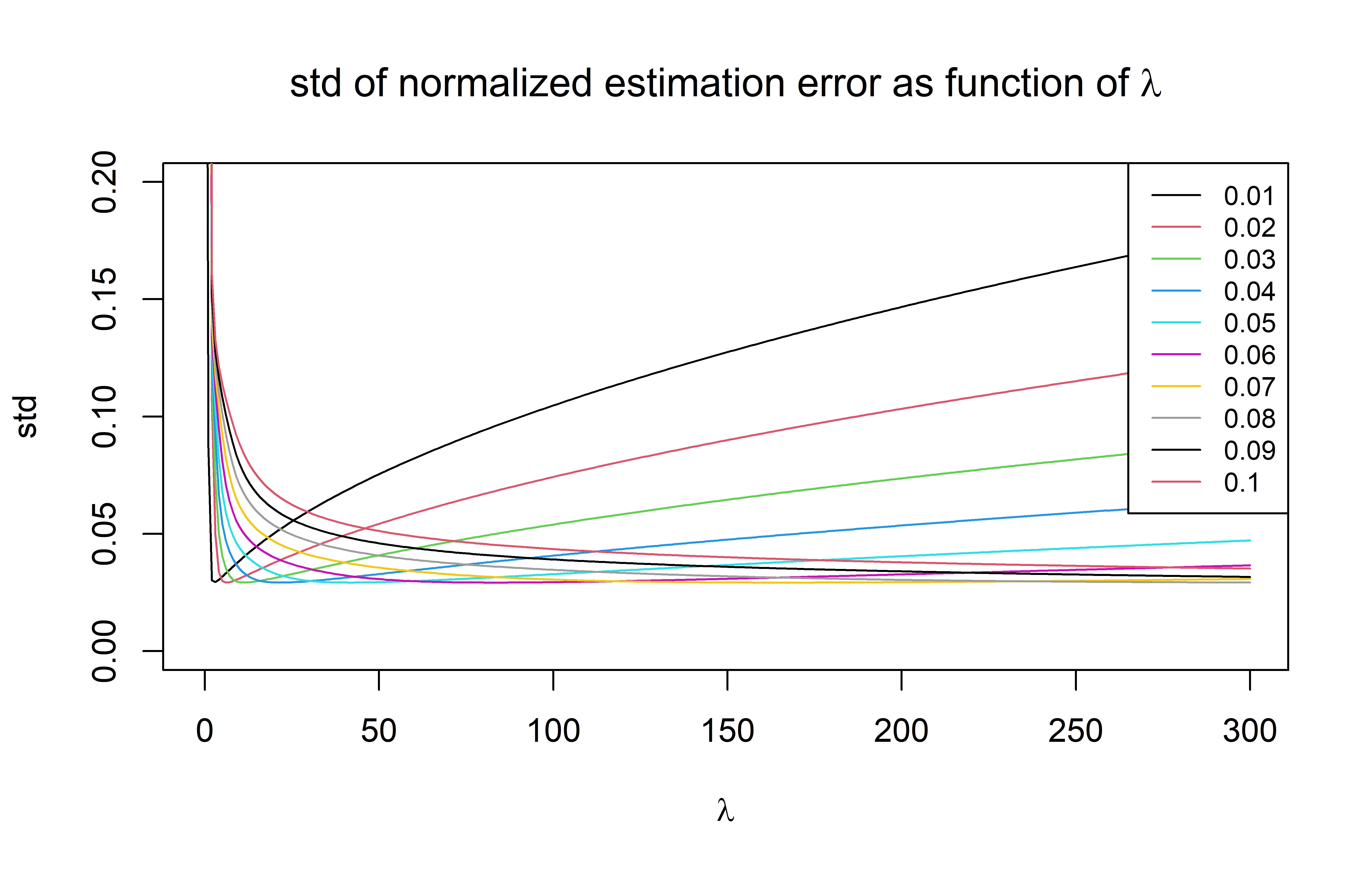} 
}
\caption{ Theoretical std of normalized estimation error as function of $\lambda$ calculated for different values of $\theta$. Simulation parameters: $p=15,\: \mu=1,\: C=1$.}\label{fig:unnamed-chunk-10}
\end{figure}

Figure 7 shows how std changes with $\lambda$, where $\mu=1$ is fixed so $\lambda$ equals to the utilization rate. Similarly to behavior in price (Figure~\ref{fig:unnamed-chunk-9}), std is high when $\lambda$ is very low or very high, with some minimum between. An intuitive explanation for this is that more information is obtained when the queue is observed at many different lengths, thus catching different parts of the service distribution. When $\lambda$ is low, the queue length is small most of the time and the transition data becomes less informative. Similarly, when $\lambda$ is very high, the queue length high most of the time, and again the data become less informative. Moreover, a lower $\theta$, i.e., a higher average reward, strengthens this effect encourages more customers to join at any queue length.

\subsection{Hyperexponential service value.}\label{sec:hyperexponential-service-value.}

We include the hyperexponential distribution as a representative example to illustrate our methodology in a multiparametric setting. Although it describes a single random variable, it is governed by a vector of parameters (mixture weights and rates), making it a natural test case for inference with multiple unknown parameters. Importantly, as a finite mixture of exponentials, the hyperexponential family is dense in the class of distributions on the positive real line, and can therefore approximate a broad range of service-value distributions with arbitrary accuracy (see \cite{botta_approximation_1986}). Consequently, adopting a hyperexponential specification significantly reduces the need for precise prior knowledge of the true service-value distribution, while still allowing for flexible and accurate modeling of heterogeneity. This makes it particularly appealing for practical applications where the underlying distribution is unknown or only partially understood.

Suppose that the service values are hyperexponentially distributed with parameters $\pmb{\theta}=\{\gamma_1,...\gamma_m,w_1,...,w_m\}$. Its CDF is
\begin{align}\label{eq:hexp_F}
F(r,\pmb{\theta})=1-\sum_{j=1}^{m}w_j\exp(-\gamma_j r)\ ,  r\geq 0 .
\end{align}
Where $\gamma_j>0,\: w_j\geq \underline{w}$ and $\sum_{j=1}^{m}w_j=1$, and $\underline{w}>0$ is a uniform lower bounds for the weights. Clearly, the last constraint implies that there are in fact $n=2m-1$ unknown parameters. For the model to be identifiable we also assume that the rate parameters are ordered:
\begin{align*}
    0\leq \gamma_1\leq \gamma_2\leq \ldots\leq \gamma_m .
\end{align*}
Without this assumption you can trivially get the same distribution by switching the labels of two pairs of $(\gamma_j,w_j)$.

We next verify that all of the assumptions required for our asymptotic results hold in this case. This will be followed by analysis of simulation experiments involving estimation.

\textbf{Verification of
assumptions}\label{verification-of-assumptions}

Taking derivatives of \eqref{eq:hexp_F} yields
\begin{align}\label{eq:hexp_F_der_lambda}
\frac{\partial}{\partial\gamma_j}F(r(q_i),\pmb{\theta})=w_jr(q_i)\exp(-\gamma_j r(q_i)),
\end{align}
and
\begin{align}\label{eq:hexp_F_der_w}
\frac{\partial}{\partial w_j}F(r(q_i),\pmb{\theta})=-\exp(-\gamma_j r(q_i)).
\end{align}
The log-likelihood is then
\begin{align*}
    \log \mathcal{L}(\pmb{\theta}|Q_k)&= -\sum_{i\in K_1\cup K_2}\log\left\{\mu+\lambda\sum_{j=1}^{m}w_j\exp(-\gamma_j r(q_i)) \right\}\\
    & \quad +
\sum_{i\in K_1}\log\left\{ \lambda\sum_{j=1}^{m}w_j\exp(-\gamma_j r(q_i)) \right\}+
|K_2|\log\left\{ \mu \right\}.
\end{align*}

Yielding the estimating equation for $\gamma_j$,
\[
\sum_{i\in K_1\cup K_2}\left(\frac{\lambda r(q_i)\exp(-\gamma_jr(q_i))}{\mu+\lambda\sum_{l=1}^{m}w_l\exp(-\gamma_l r(q_i))}\right)=
\sum_{i\in K_1}\left(\frac{r(q_i)\exp(-\gamma_jr(q_i))}{\sum_{l=1}^{m}w_l\exp(-\gamma_l r(q_i))}\right),\:j=1,...,m.
\]

and for $w_j$,

\[
\sum_{i\in K_1\cup K_2}\left(\frac{\lambda\exp(-\gamma_jr(q_i))}{\mu+\lambda\sum_{l=1}^{m}w_l\exp(-\gamma_l r(q_i))}\right)=
\sum_{i\in K_1}\left(\frac{\exp(-\gamma_jr(q_i))}{\sum_{l=1}^{m}w_l\exp(-\gamma_l r(q_i))}\right),\:j=1,...,m-1 .
\]

Note that when solving the estimation equations we have an additional constraint,  namely $\sum_{j=1}^{m}w_j=1$, to ensure the solution is within the correct parameter space.

Now, we verify that Assumptions A1-A4 hold.

\begin{itemize}

\item A1. Suppose that $\pmb{\theta}_0 \in \pmb{\Theta}$, where is $\pmb{\Theta}$ compact and convex. For example, the rates can belong to some interval $[0,\bar{\gamma}]$ and the weights form a simplex. Recall that we also assumed $w_j\geq \underline{w}>0$ for all $j=1,\ldots,m$.

\item A2. Observe that \eqref{eq:hexp_F}, \eqref{eq:hexp_F_der_lambda}, \eqref{eq:hexp_F_der_w} are all continuous w.r.t. $\pmb{\theta}\geq\pmb{0}$.

\item A3. For any $j\in \{1,...,m\}$, as $w_j\in[0,1]$,

$$\left|\frac{\frac{\partial}{\partial\gamma_j}F(r(q),\pmb{\theta})}{1-F(r(q),\pmb{\theta})}\right|=
\left|-\frac{w_jr(q)\exp(-\gamma_jr(q))}{\sum_{l=1}^{m}w_l\exp(-\gamma_l r(q))}\right|\leq r(q),$$

and 

$$\left|\frac{\frac{\partial}{\partial w_j}F(r(q),\pmb{\theta})}{1-F(r(q),\pmb{\theta})}\right|=
\left|-\frac{\exp(-\gamma_jr(q))}{\sum_{l=1}^{m}w_l\exp(-\gamma_l r(q))}\right|
\leq \frac{\exp(-\gamma_jr(q))}{w_j\exp(-\gamma_j r(q))}
\leq \frac{1}{w_j}\leq \frac{1}{\underline{w}}<\infty.$$
 Thus,
\begin{align*}
\max_{j=1,\ldots,n}\left|\frac{ \nabla F(r(q),\pmb{\theta})}{1-F(r(q),\pmb{\theta})}\right|_j\leq \max\left\lbrace\frac{1}{\underline{w}},r(q)\right\rbrace\ , \forall q\geq 0 ,
\end{align*}
and $\mathbb{E}[Q_0]<\infty$ implies that $\mathbb{E}[ \max\left\lbrace\frac{1}{\underline{w}},r(q)\right\rbrace]<\infty$ as Assumption A3 requires.

\item A4. Firstly we calculate the $\Sigma$. Recall that the parameter vector is now $\{\gamma_1,\ldots,\gamma_m,w_1,\ldots,w_{m-1}\}$. For the sake of brevity, we rewrite the covariance matrix in a manner that includes that actual parameters and not their index (out of the $2m$ parameters); 
\begin{align*}
    \Sigma_{\gamma_j,\gamma_k}& :=\Sigma_{j,k} \ , k,j=1,\ldots,m,\\
    \Sigma_{\gamma_j,w_k}& :=\Sigma_{j,m+k} \ ,  k=1,\ldots,m\ , j=1,\ldots,m-1,\\
    \Sigma_{w_j,w_k}& :=\Sigma_{m+j,m+k} \ ,  k=1,\ldots,m\ , j=1,\ldots,m-1.
\end{align*}
Applying (\ref{eq:hexp_F}), (\ref{eq:hexp_F_der_lambda}) and (\ref{eq:hexp_F_der_w}) into into the derivation in Assumption~A4 and have that
$$\Sigma_{\gamma_j,\gamma_k}=\mathbb{E}\left[
\frac{\mu\lambda w_j w_k r^2(Q_0) \exp(-(\gamma_j+\gamma_k)r(Q_0))}{\sum_{l=1}^{m}w_l\exp(-\gamma_l r(Q_0))(\mu+\lambda \sum_{l=1}^{m}w_l\exp(-\gamma_l r(Q_0)))^2}
\right],$$

$$\Sigma_{\gamma_j,w_k}=\mathbb{E}\left[
\frac{-\mu\lambda w_j r(Q_0) \exp(-(\gamma_j+\gamma_k)r(Q_0))}{\sum_{l=1}^{m}w_l\exp(-\gamma_l r(Q_0))(\mu+\lambda \sum_{l=1}^{m}w_l\exp(-\gamma_l r(Q_0)))^2}
\right],$$

$$\Sigma_{w_j,w_k}=\mathbb{E}\left[
\frac{\mu\lambda \exp(-(\gamma_j+\gamma_k)r(Q_0))}{\sum_{l=1}^{m}w_l\exp(-\gamma_l r(Q_0))(\mu+\lambda \sum_{l=1}^{m}w_l\exp(-\gamma_l r(Q_0)))^2}
\right].$$

Now we use the fact that $\lim_{q\rightarrow \infty}(\mu+\lambda \sum_{l=1}^{m}w_l\exp(-\gamma_l r(Q_0)))^2=\mu^2$ and
$$
\exp((\gamma_j+\gamma_k)r(q))>>
\sum_{l=1}^{m}w_l\exp(\gamma_l r(q))>>
r(q)^2>>
r(q)
$$
for large $q$. Thus, the terms inside all three expectations are bounded by a constant with respect to $q$, and are therefore integrable with respect $Q_0$. Then by Lemma~\ref{lem:ergodic} we conclude that $\Sigma_{\gamma_j,\gamma_k}$, $\Sigma_{\gamma_j,w_k}$ and $\Sigma_{w_j,w_k}$ are finite. 

The random variables inside the expectations of the elements of $\Sigma$ are all integrable with respect to the s.tationary distribution. Furthermore, as they are not linearly dependent (recalling that we have $m-1$ weight parameters), we conclude tha $\Sigma$ is invertible and A4 holds.

\end{itemize}

\textbf{Simulation analysis}

We focus on the case \(m=2\), which is the simplest nontrivial vector-valued setting. 
Figure~\ref{fig:ll-hyperexponential} depicts the log-likelihood function as a function of 
\(\pmb{\gamma}\), with the weight vector \(\pmb{w}\) fixed at its true value. 
Several observations emerge from this figure. 
First, even for a sample size of \(k=10^5\), the maximizer is attained at 
\(\pmb{\gamma}=\{0.073,\,0.061\}\) and \(\pmb{w}=\{0.091,\,0.908\}\), whereas the 
true parameters are \(\pmb{\gamma}=\{0.1,\,0.01\}\) and 
\(\pmb{w}=\{0.3,\,0.7\}\). 
This illustrates that accurate estimation may require \(10^7\) or even \(10^8\) observations, 
depending strongly on the specific parameter configuration. 
Second, the log-likelihood is extremely flat in the direction of the smaller rate parameter. 
In particular, black points corresponding to values within \(10^{-5}\) of the maximum (for the 
true \(\pmb{w}\)) span a wide region. 
This pronounced flatness makes direct numerical maximization practically infeasible. 
In contrast, solving the estimating equations yields satisfactory results. 
However, since these equations are nonlinear, they must be solved iteratively, and the flatness 
of the objective function can lead to slow convergence and substantial computational burden. 
Finally, when the separation between the rate parameters increases, the log-likelihood becomes 
non-unimodal. 
Nevertheless, the symmetry induced by the line \(\gamma_1=\gamma_2\) partitions the parameter 
space into two regions, so that initializing the algorithm once with \(\gamma_1<\gamma_2\) and 
once with \(\gamma_1>\gamma_2\) guarantees convergence to the global maximum.

\begin{figure}[H]
{\centering \includegraphics[width=0.8\linewidth]{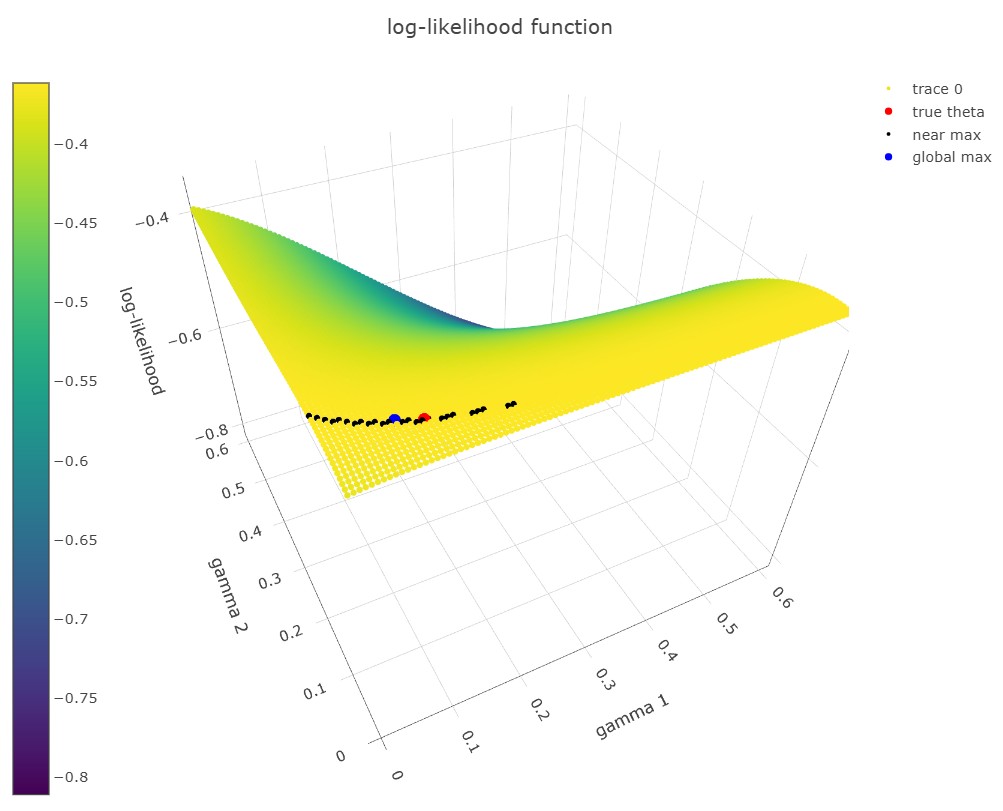}
}
\caption{Log-likelihood function in $\gamma_1,\: \gamma_2$ with fixed true $w_1,\:w_2$. Simulation parameters: $\lambda=0.5,\: \mu=1,\: \pmb{\gamma}=\{0.1,\:0.05\},\: \pmb{w}=\{0.3,\: 0.7\},\: C=1,\: p=5,\: k=10^6$. The global maximum is attained at $\pmb{\gamma}=\{0.073,0.061\},\:\pmb{w}=\{0.091,\: 0.908\}$}.\label{fig:ll-hyperexponential}
\end{figure}

As Figure~\ref{fig:consistency-hyperexponential} illustrates, three of the four parameters require on the order of $4\cdot 10^5$
 observations to achieve reliable estimation. Overall, this confirms that accurate inference in the hyperexponential case demands a very large sample size. Moreover, even when sufficient data are available, obtaining the estimator involves a lengthy iterative procedure. Consequently, the maximizer of the likelihood function in the parameter space may lie substantially closer to the true parameter vector than the numerical estimate produced by the algorithm.

One explanation for this behavior is that very similar distribution functions can be generated by different combinations of the parameter vector \(\pmb{\theta}\). While this makes the hyperexponential distribution a flexible parametric model, this also means that accurate estimation of the individual parameters is challenging. 
On the other hand, for practical applications the exact parameter values are often less important than the distribution function they induce. For this reason, as we demonstrate below, the pricing algorithm performs well with a substantially smaller sample size than that required for accurate estimation of \(\pmb{\theta}\).

It can also be observed that the confidence intervals for three of the four parameters are extremely narrow, while the true parameter values frequently lie outside these intervals. We recall that the confidence intervals are computed using the asymptotic variance formula, which relies on the assumption that the queue-length process has reached stationarity. However, as demonstrated earlier, convergence to stationarity under a hyperexponential service-value distribution can be slow. As a result, for moderate values of k, the finite-sample variance of the estimator may be significantly larger than its asymptotic counterpart, leading to undercoverage of the nominal confidence intervals.

\begin{figure}[H]
{\centering \includegraphics[width=1.0\linewidth]{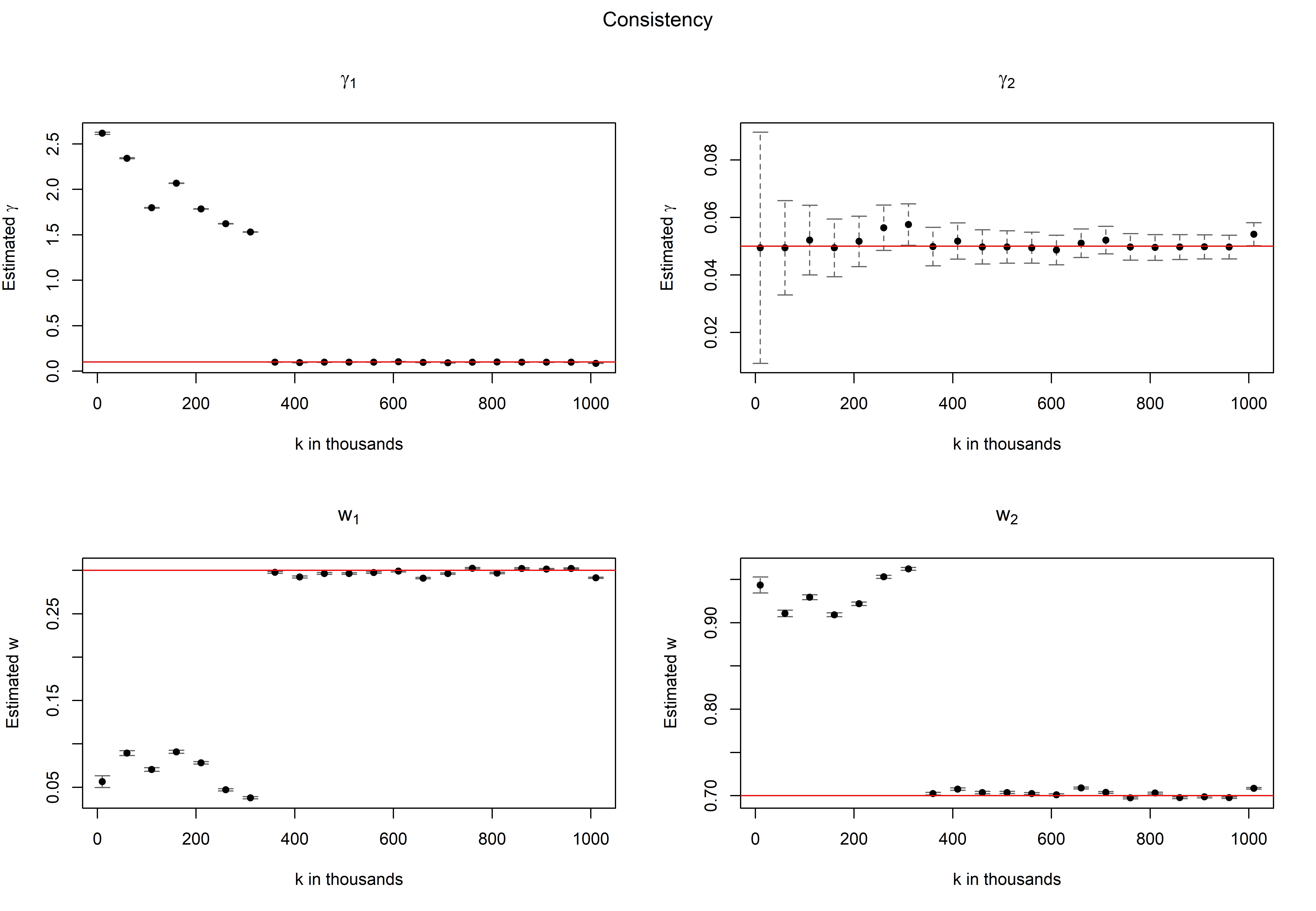}
}
\caption{The plots show the convergence of the estimated $\pmb{\theta}$ to the true value as $k$ grows. Confidence Intervals are calculated using asymptotic variance formula. Simulation parameters: $\lambda=0.5,\: \mu=1,\: \pmb{\gamma}=\{0.1,\:0.05\},\: \pmb{w}=\{0.3,\: 0.7\},\: C=1,\: p=5$.}\label{fig:consistency-hyperexponential}
\end{figure}

\section{Iterative price optimization algorithm}\label{sec:iterative-pricing-algorithm}

The estimation of utility parameters can be used for different purposes.
One of them is the optimization of the admission price. Suppose that an
administrator is interested to maximize the expected revenue by choosing
the best service price. For any price \(p>0\), the stationary expected
revenue per unit of time \(\Pi\) is given by

\[\Pi(p,\pmb{\theta})=p\sum_{q=0}^{\infty}P_{\theta}(Q_0=q)\lambda_q\:.\]

The stationary distribution \(Q_0\) can be calculated with (Haviv
(2009), section 8.3.1), \begin{align}\label{prob_est}
P_{\theta}(Q_0=q)=\frac{\xi_q}{\sum_{q=0}^{\infty}\xi_q},
\end{align}

where

\begin{align}\label{xi}
\xi_q=
\left\{\begin{matrix}
1 &,\:\: q=0  \\
\prod_{j=1}^{q}\frac{\lambda_{j-1}}{\mu}  &,\:\: q>0
\end{matrix}\right.
\end{align}

In practice, the infinite sum is evaluated by truncation with high
enough \(q^*\), where
\(q^*=\left\{ q\in\mathbb{Z}^+:p(Q>q)<\epsilon \right\}\), and
\(\epsilon\) is a specified tolerance parameter.

The expected revenue can be computed only assuming some value of
\(\pmb{\theta}\), which is unknown and should be estimated. However, in
order to estimate \(\pmb{\theta}\), a sample must be collected. In order
to collect the sample, the administrator needs to decide first on some
price. One possible option is to choose a random price and collect as
much data as possible. However, if the chosen price is far away from the
optimal one it can lead to substantial losses in revenue. Another
possibility is to optimize the price in steps. We suggest the following
iterative optimization algorithm.

\hfill\break

\begin{algorithm}[H]
\caption{Iterative price optimization algorithm}\label{algo}
\begin{algorithmic}[1]

\Start{\:Algorithm}
  \State Choose an arbitrary price $p_1$.
  \State Choose first sample size $k_1$.
  \State Choose strictly increasing function $g(k):\mathbb{N}\rightarrow \mathbb{N}$.
  \State Set $\Delta_1<tol$
  \State Set $i=1$
  \While{$\Delta_i<$ tol}
    \State Set $p_{i}=p_1$ if $i=1$ and $p_{i}=\hat{p}_{i-1}$ otherwise.
    \State Set $k_{i}=k_1$ if $i=1$ and $k_{i}=g(k_{i-1})$ otherwise.
    \State Collect $k_{i}$ queue length observations.
    \State Estimate $\hat{\pmb{\theta}}_{k_i}$.
    \State Set $\hat{\pmb{\theta}}=\frac{\sum_{j=1}^{i} k_j \hat{\pmb{\theta}}_{k_j}}{\sum_{j=1}^{i} k_j}$.
    \State Find $\hat{p}_{i}=\underset{p}{\text{argmax}}\left\{ \Pi(p,\hat{\pmb{\theta}}) \right\}$.
    \State Set $i=i+1$
    \State Calculate $\Delta_i$.
  \EndWhile
  \State Choose $\hat{p}_{i}$ as optimal price $p^*$.

\end{algorithmic}
\end{algorithm}

\hfill\break

It is important that the number of observations increases every
iteration. The consistency of the estimator ensures that if the
increasing sequence (\(k_i\)) is unbounded then the algorithm converges
in probability to the optimal price (if it is never stopped). The
measure used in the stopping rule (\(\Delta\)) can be defined in
different terms. The choice of \(\hat{\pmb{\theta}}\) as the mean of all
estimated \(\hat{\pmb{\theta}}_{k_i}\) weighted by sample size is
intended to reduce the probability that \(\Delta\) is small or even
\(0\) when the sample size is very small and two consecutive samples can
carry the same information by chance. The \(\Delta\) we used in our
example is the following

\[\Delta_i=\frac{\left| \frac{\pi_i}{t_i} -\Pi(\hat{p},\hat{\pmb{\theta}}) \right|}{\frac{\pi_i}{t_i}}.\]

Where \(\pi_i\) and \(t_i\) are the actual revenue and the total
interarrival time of step \(i\) respectively. A better comparison is
\(\left|\frac{\Pi(\hat{p}, \pmb{\theta})-\Pi(\hat{p},\hat{\pmb{\theta}})}{\Pi(\hat{p}, \pmb{\theta})}\right|\),
which measures a difference between expected stationary revenue with
chosen price (with estimated \(\pmb{\theta}\)) and the actual stationary
revenue with chosen price (with true \(\pmb{\theta}\)). However, in
order to calculate \(\Pi(\hat{p}, \pmb{\theta})\) the true value of
\(\pmb{\theta}\) is required. Thus we replace it with empirical version
\(\frac{\pi_i}{t_i}\). Unlike the \(\Pi(\hat{p}, \pmb{\theta})\),
\(\frac{\pi_i}{t_i}\) is stochastic and depends on the data. As a
result, \(\Delta\) can be very small even though its stationary version
is large enough. However, as \(k\) grows, we expect that
\(\frac{\pi_i}{t_i}\rightarrow\Pi(\hat{p}, \pmb{\theta})\) and the
estimated optimal price will be close to the true one.

\subsection{Exponential example}

We now apply the suggested pricing algorithm to the exponential
example. Figure \ref{fig:unnamed-chunk-11}, shows that when the price is too low or too high the expected revenue decreases,
however somewhere exists an optimal price. And again, the revenue
declines much faster for prices lower than optimal, rather than for
prices higher than optimal. This analysis leads to the conclusion that
if there are no prior beliefs about the optimal price, the learning
should be started from high prices. This strategy will lead to both
lower revenue losses per customer and faster convergence. However, as we
show further, this analysis does not include important information which
affects the choice of the initial price. 

\begin{figure}[h]
{\centering \includegraphics[width=0.7\linewidth]{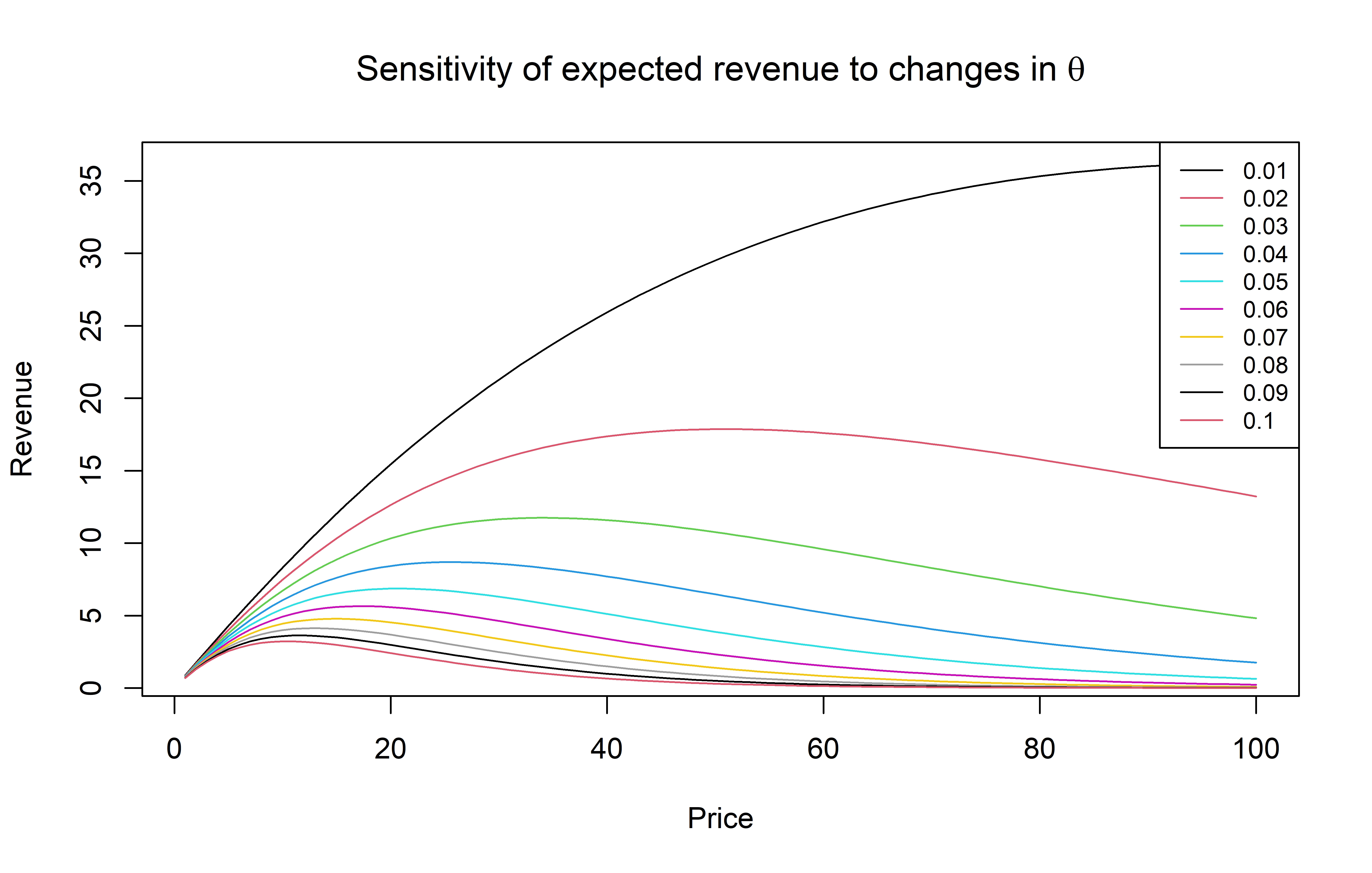} 
}
\caption{Sensitivity of expected revenue to changes in $\theta$. Dashed lines show the prices that maximize revenue for $\theta=0.02$ (50.9) and $\theta=0.08$ (13). Simulation parameters: $\lambda=1,\: \mu=1,\: C=1$. }\label{fig:unnamed-chunk-11}
\end{figure}

Table \ref{tab1} summarizes 100 simulations with different initial
conditions: initial observations number (\(k_1\)), initial
price (\(p_1\)) with \(k_{i+1}=g(k_i)=2k_i\) (the functions used in order to calculate
the \(k_i\) for every iteration). 


In the summary table, the final
stationary fraction of maximum revenue is defined as
\(\frac{\Pi(\hat{p}_l,\theta)}{\Pi(p,\theta)}\) where \(\hat{p}_l\) is
the last price estimated before we stopped the learning and \(p\) is the
optimal price. Stationary cumulative fraction of maximum revenue is defined as
\(\sum_{i=1}^{l}t_i\Pi(\hat{p}_i,\theta)/\sum_{i=1}^{l}t_i\Pi(p,\theta)\).
Where \(\sum_{i=1}^{l}t_i\Pi(p,\theta)\) is the maximum possible revenue
of all iterations before the learning stopped, while
\(\sum_{i=1}^{l}t_i\Pi(\hat{p}_i,\theta)\) is the received revenue
during all iterations before the learning stopped. The total lost
revenue is defined as
\(\sum_{i=1}^{l}\left(t_i\Pi(p,\theta)-t_i\Pi(\hat{p}_i,\hat{\theta_i})\right)\).

\begin{table}[h]

\centering
\resizebox{\textwidth}{!}{%
\begin{tabular}{@{}|l|cccccccc|@{}}
\toprule
$g(k_i)$                                   & \multicolumn{8}{c|}{$2k_i$}                                                                                                                                                                                          \\ \midrule
$k_1^{min}$                                    & \multicolumn{4}{c|}{2}                                                                                            & \multicolumn{4}{c|}{100}                                                                      \\ \midrule
$p_1$                                     & \multicolumn{1}{c|}{1}     & \multicolumn{1}{c|}{15}    & \multicolumn{1}{c|}{100}   & \multicolumn{1}{c|}{250}   & \multicolumn{1}{c|}{1}     & \multicolumn{1}{c|}{15}    & \multicolumn{1}{c|}{100}   & 250    \\ \midrule
Iterations                     & \multicolumn{1}{c|}{7.62}  & \multicolumn{1}{c|}{8.17}  & \multicolumn{1}{c|}{7.96}  & \multicolumn{1}{c|}{6.32}  & \multicolumn{1}{c|}{4}     & \multicolumn{1}{c|}{4}     & \multicolumn{1}{c|}{4}     & 4      \\ \midrule
Total number of observations used for learning          & \multicolumn{1}{c|}{1507}  & \multicolumn{1}{c|}{1547}  & \multicolumn{1}{c|}{1584}  & \multicolumn{1}{c|}{1512}  & \multicolumn{1}{c|}{1502}  & \multicolumn{1}{c|}{1500}  & \multicolumn{1}{c|}{1500}  & 1533   \\ \midrule
Final stationary fraction of max revenue              & \multicolumn{1}{c|}{0.994} & \multicolumn{1}{c|}{0.994} & \multicolumn{1}{c|}{0.996} & \multicolumn{1}{c|}{0.996} & \multicolumn{1}{c|}{0.983} & \multicolumn{1}{c|}{0.991} & \multicolumn{1}{c|}{0.995} & 0.996  \\ \midrule
Stationary cumulative fraction of max   revenue & \multicolumn{1}{c|}{0.978} & \multicolumn{1}{c|}{0.979} & \multicolumn{1}{c|}{0.976} & \multicolumn{1}{c|}{0.636} & \multicolumn{1}{c|}{0.915} & \multicolumn{1}{c|}{0.955} & \multicolumn{1}{c|}{0.95}  & 0.293  \\ \midrule
Total lost revenue                                & \multicolumn{1}{c|}{1164}  & \multicolumn{1}{c|}{992}   & \multicolumn{1}{c|}{1301}  & \multicolumn{1}{c|}{27451} & \multicolumn{1}{c|}{3223}  & \multicolumn{1}{c|}{2083}  & \multicolumn{1}{c|}{2507}  & 128602 \\ \midrule
Mean error of final price                                  & \multicolumn{1}{c|}{2.75}  & \multicolumn{1}{c|}{2.89}  & \multicolumn{1}{c|}{2.64}  & \multicolumn{1}{c|}{2.49}  & \multicolumn{1}{c|}{3.89}  & \multicolumn{1}{c|}{2.94}  & \multicolumn{1}{c|}{2.22}  & 2.51   \\ \midrule
Std of optimal price error                        & \multicolumn{1}{c|}{2.18}  & \multicolumn{1}{c|}{2.18}  & \multicolumn{1}{c|}{1.99}  & \multicolumn{1}{c|}{1.98}  & \multicolumn{1}{c|}{3.11}  & \multicolumn{1}{c|}{2.34}  & \multicolumn{1}{c|}{1.82}  & 1.89   \\ \midrule
Optimal price & \multicolumn{8}{c|}{50.89} \\ \bottomrule
\end{tabular}%
}
\caption{Summary table of 100 simulations for different $p_1$, $k_1^{min}$ with $g(k_i)=2k_i$.}
\label{tab1}
\end{table}

Figures \ref{fig:price_convergence} and \ref{fig:revenue_convergence} demonstrate convergence of price and revenue to the optimal values for different combinations of $p_1$ and $k_1$. Each
point is one iteration, however, the \(x\) axis unit is the total number
of used observations until the iteration (including). This allows us to
compare the learning process of different combinations. The \(y\) axis
of the figures represents the metric per iteration. For example, in
Figure \ref{fig:price_convergence}, the first point of the combination
\(\: p_1=15,\: k_1=100)\) is located on coordinates
\((100, 15)\) which is the initialization point. The second point is
\((300, 52)\) which means that \(k_2=200\) and \(p_2=52\). The \(y\)
axis in Figure \ref{fig:revenue_convergence} is the fraction of maximum revenue, defined as
\(\frac{\Pi(\hat{p}_i,\theta)}{\Pi(p,\theta)}\) where \(\hat{p}_i\) is
the price estimated in iteration \(i-1\).

From the summary table (\ref{tab1}) and convergence plots (Figures \ref{fig:price_convergence} and \ref{fig:revenue_convergence}) we can make a number of conclusions. On the one hand, higher initial price and higher $k_{1}$ (each of them separately) on average lead to faster convergence. On the other hand, faster learning with a high \(p_1\) is achieved at the expense of the revenue gained during the learning period. The higher the price,
the lower the effective arrival rate, the more time required to gather
the desired number of observations, and the higher the revenue loss. As
a result, requiring more observations enhance this effect. The too-low
price also leads to higher loss, though, the desired sample will be
gathered in a short time, thus the total loss will be much smaller (the
loss can be much higher if production cost is taken into account). This
result disagrees with our previous observation of revenue as a function of
the price which didn't take the effect of the time required to gather
the data into account.

\begin{figure}[h]

{\centering \includegraphics[width=0.7\linewidth]{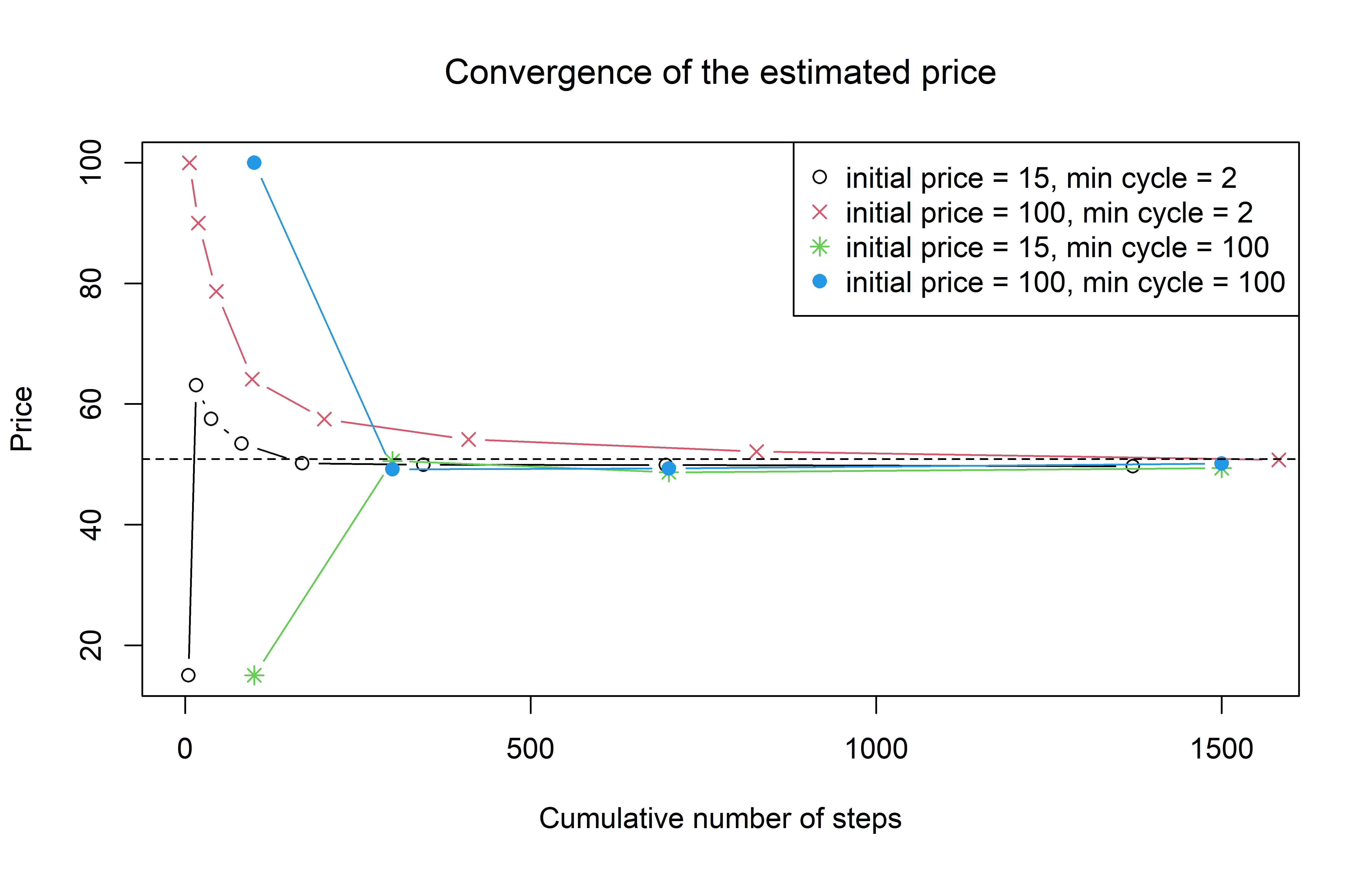} 
}

\caption{Convergence of the estimated price for different initial prices, minimum observation numbers, and observation number growth functions (average of 100 independent runs for each combination of parameters). Optimal price: $50.89$. Simulation parameters: $\lambda=1,\: \mu=1,\: C=1,\: \theta=0.02$.}\label{fig:price_convergence}
\end{figure}
\hfill\break

\begin{figure}[H]

{\centering \includegraphics[width=0.7\linewidth]{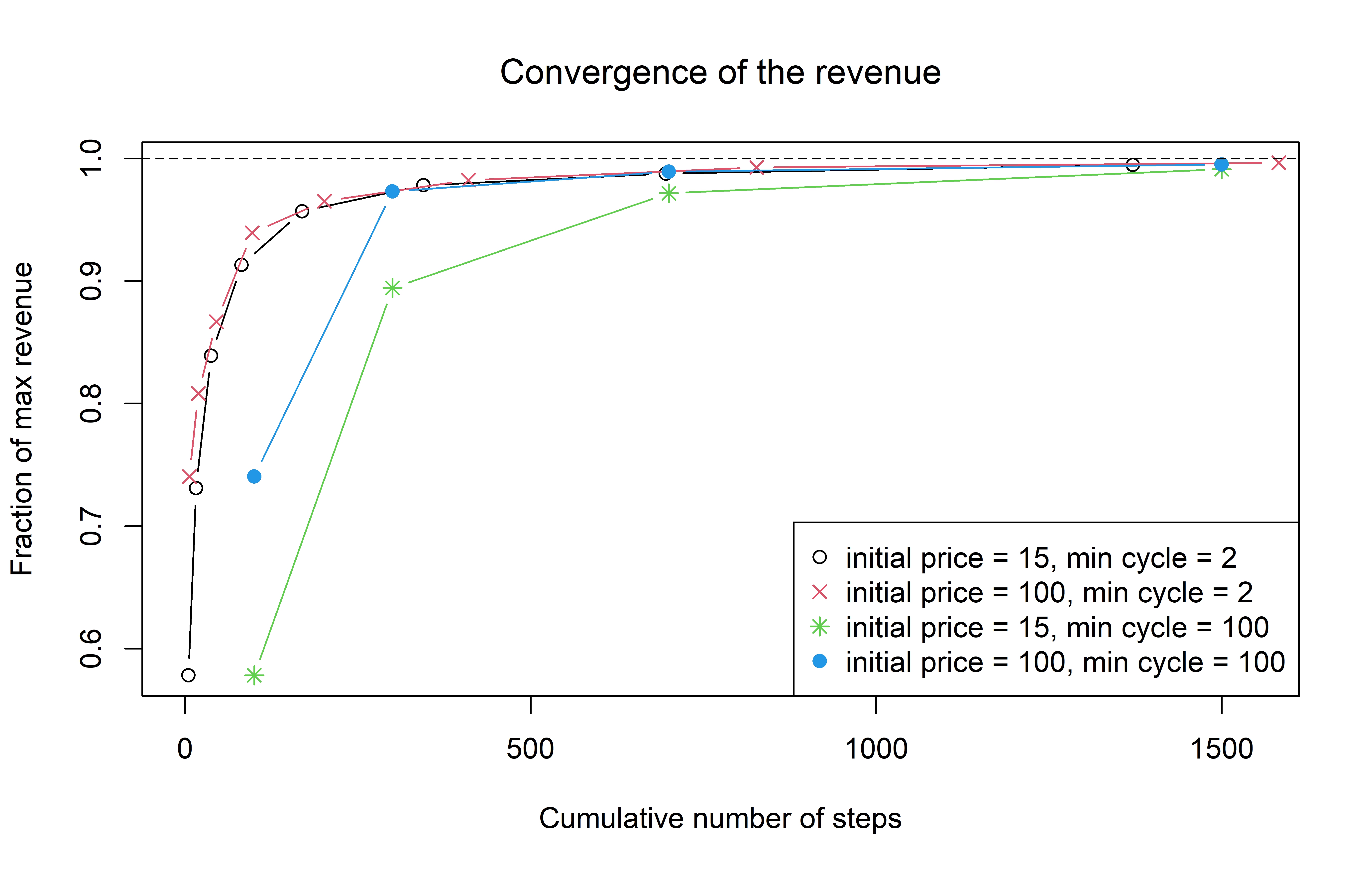}
}

\caption{Convergence of the revenue to the maximum revenue for different initial prices, minimum observation numbers, and observation number growth functions (average of 100 independent runs for each combination of parameters). Simulation parameters: $\lambda=1,\: \mu=1,\: C=1,\: \theta=0.02$.}\label{fig:revenue_convergence}

\end{figure}
As a general guideline, we can conclude that as we expected, for faster
convergence (in sample size terms) it is better to use more observations
and high prices. However, if too high price was chosen by chance,
the lost revenue will be very high. This ref

\subsection{Hyperexponential example}

The pricing algorithm is now applied to the case of hyperexponential service value with $m=2$. Figure~\ref{fig:expected_revenue_hyperexponential} shows that even small perturbations in the parameter vector can induce noticeable changes in the service-value distribution and, consequently, in the expected revenue. Nonetheless, the qualitative shape of the revenue curve remains similar to the exponential case.  As discussed above, high prices reduce the effective arrival rate and therefore the effective sample size, making  large samples essential for reliable estimation.  Importantly, Figure~\ref{fig:expected_revenue_hyperexponential} also demonstrates that a price level that is ``low''  for one parameter configuration may be ``high'' for another. 
For these reasons, we set \(k_1=10^4\), \(p_1=1\), and \(g(k_i)=2k_i\), ensuring rapid growth of the sample  size across iterations.

\begin{figure}[H]
{\centering \includegraphics[width=0.7\linewidth]{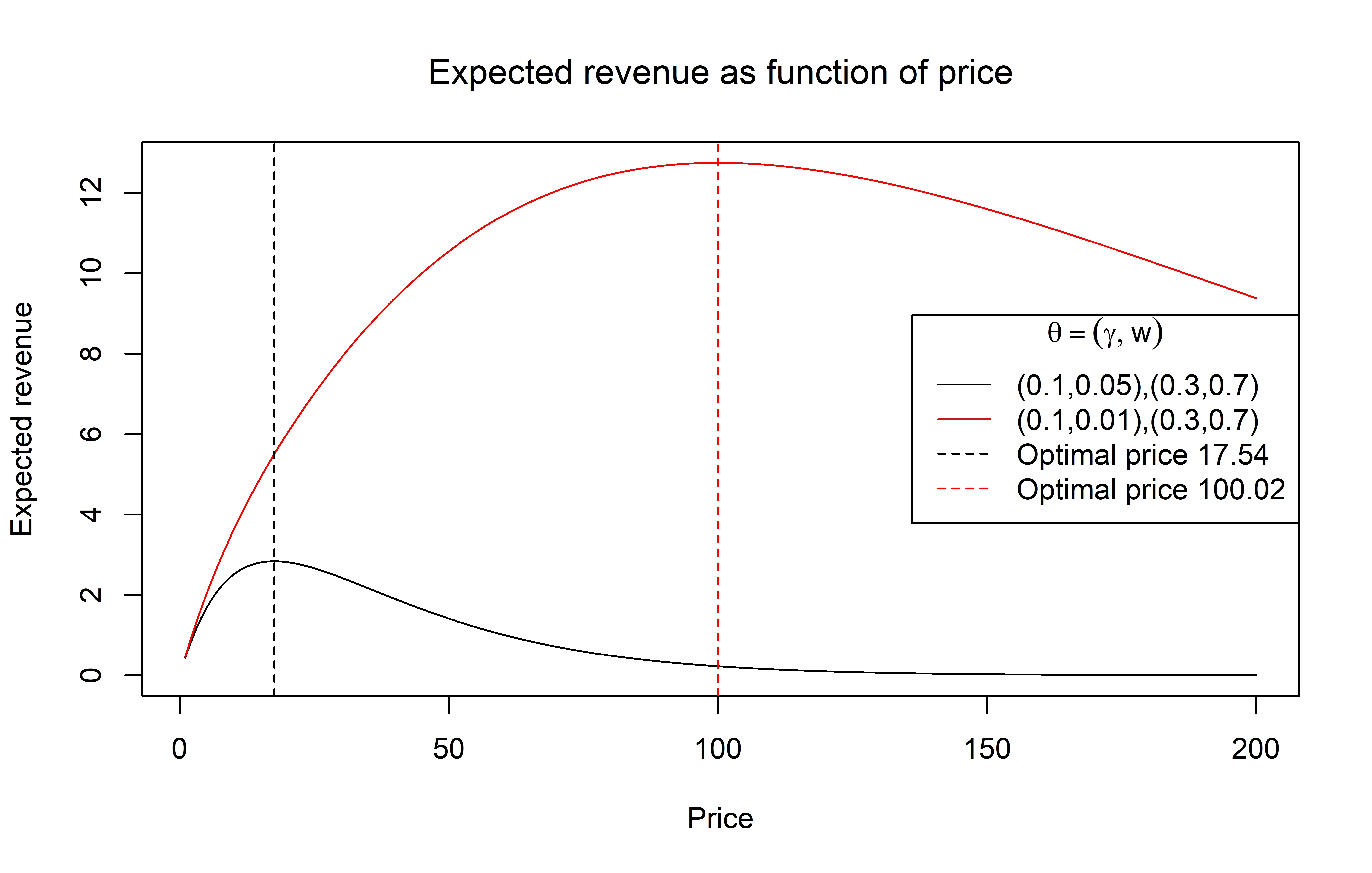}
}
\caption{Expected revenue as function of price. Simulation parameters: $\lambda=0.5,\: \mu=1,\: C=1,\: p=5$. }\label{fig:expected_revenue_hyperexponential}
\end{figure}

Figure~\ref{fig:price_convergence_hypexponential} demonstrates that for the case 
\(\pmb{\gamma}=\{0.1,0.05\}\), the achieved revenue exceeds \(99\%\) of the optimal value after 
only three iterations, corresponding to a total of approximately \(70{,}000\) observations. 
This stands in sharp contrast to the direct estimation of \(\pmb{\theta}\) in Section~4.2, 
which required roughly \(400{,}000\) observations to attain satisfactory accuracy. 
For the more challenging configuration \(\pmb{\gamma}=\{0.1,0.01\}\), the results are even 
more striking: after five iterations (about \(310{,}000\) observations in total), the revenue already 
exceeds \(99\%\) of the optimum, whereas reliable estimation of \(\pmb{\theta}\) in this case 
required at least \(10^{7}\) observations and was therefore omitted from further analysis.

\begin{figure}[H]
{\centering \includegraphics[width=0.7\linewidth]{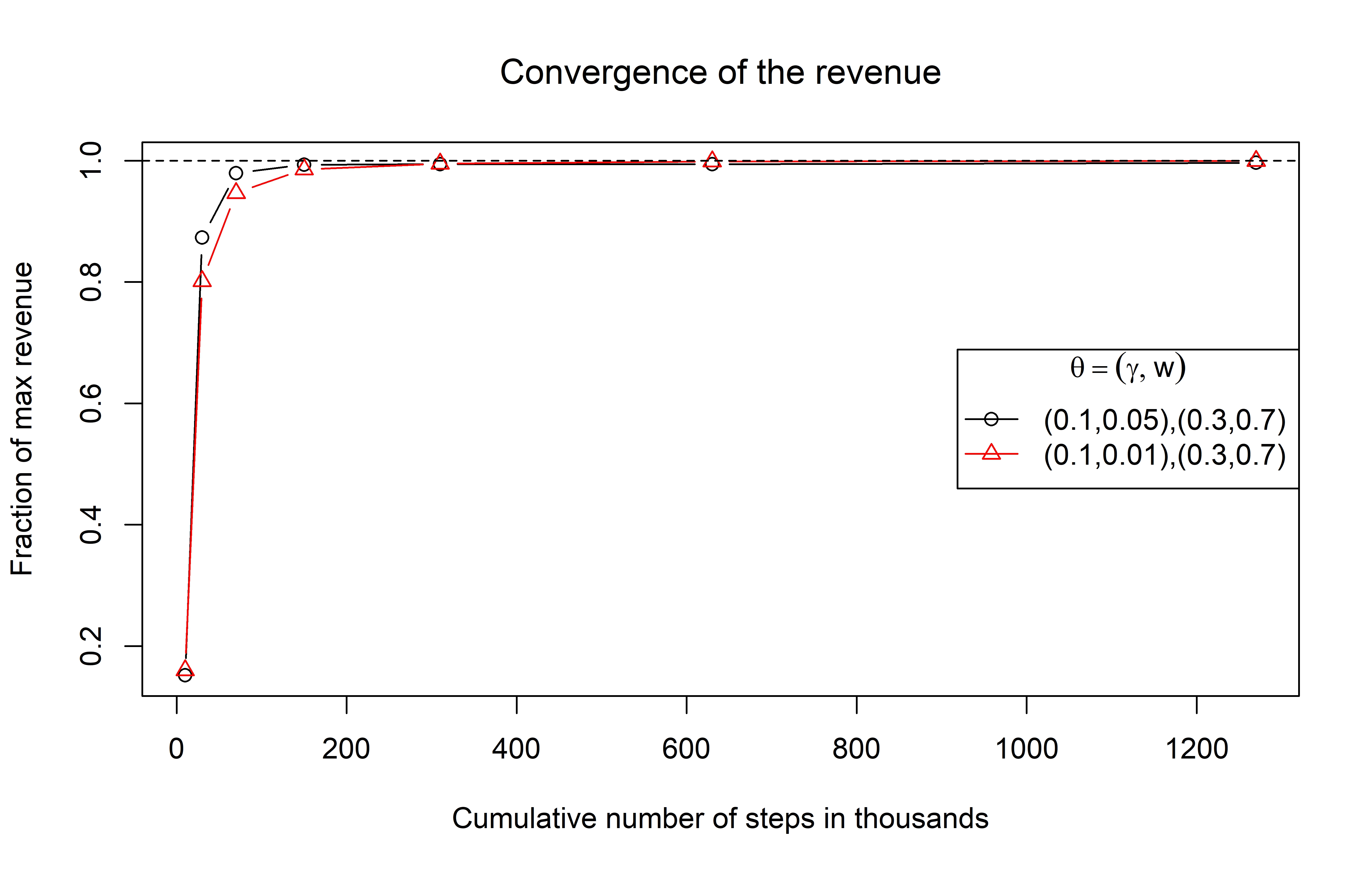}
}
\caption{Expected revenue as function of price (average of 100 independent runs for each combination of parameters). Simulation parameters: $\lambda=0.5,\: \mu=1,\: \pmb{\gamma}=\{0.1,\:0.05\},\: \pmb{w}=\{0.3,\: 0.7\},\: C=1,\: p=5$.}\label{fig:revenue_convergence_hypexponential}
\end{figure}

Figure~\ref{fig:revenue_convergence_hypexponential} shows that price convergence is noticeably slower 
than revenue convergence, particularly for the case \(\pmb{\gamma}=\{0.1,0.01\}\). 
This behavior can be explained by two factors. 
First, the expected revenue function is relatively flat in a neighborhood of its maximum 
(see Figure~\ref{fig:expected_revenue_hyperexponential}), so substantial deviations in price may lead 
to only marginal changes in revenue. 
Second, for the parameter configuration \(\pmb{\gamma}=\{0.1,0.05\}\), the expected revenue is 
generally less sensitive to price changes across a wide range of prices, further decoupling price 
convergence from revenue convergence.

\begin{figure}[H]
\centering \includegraphics[width=0.7\linewidth]{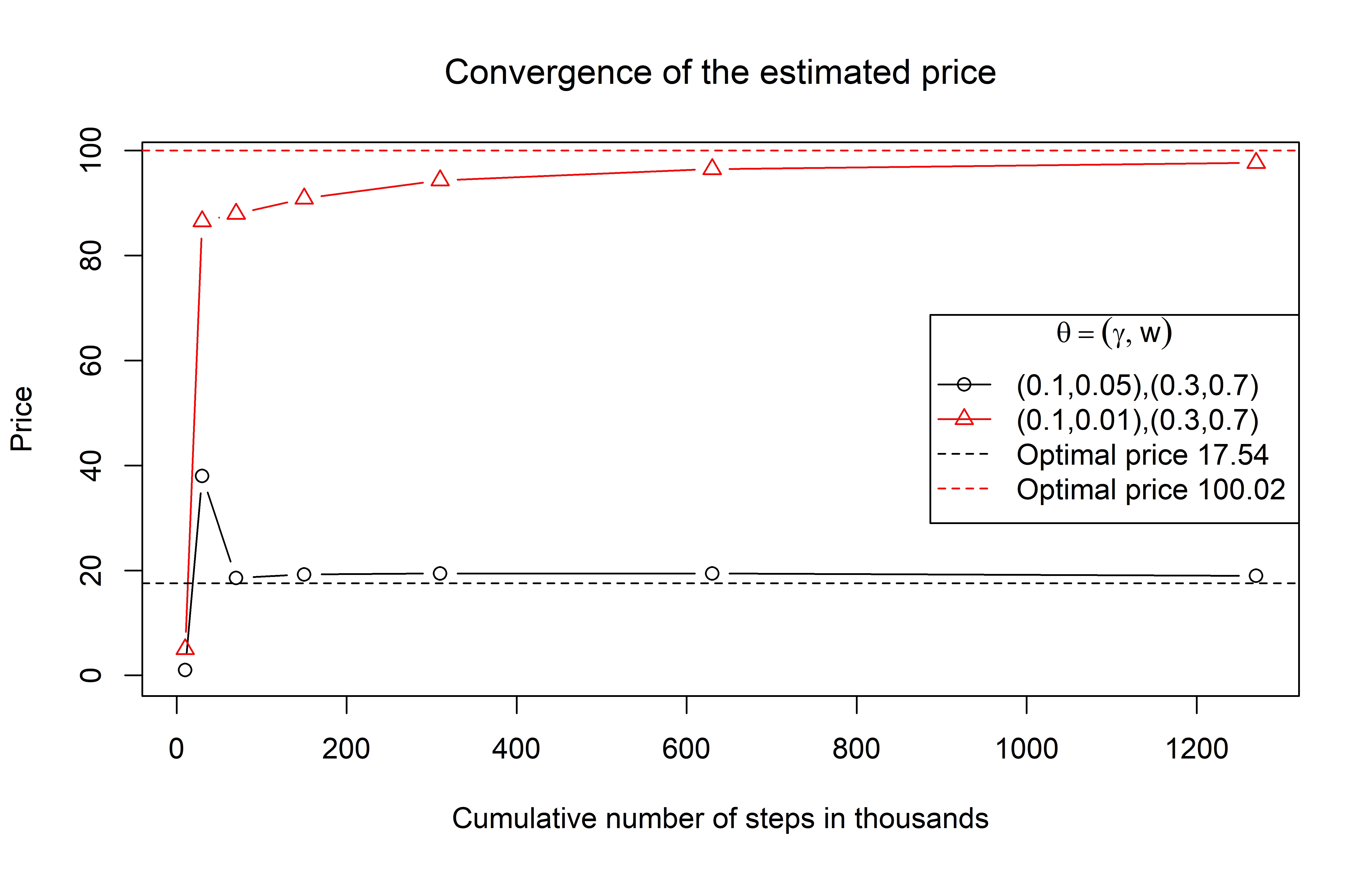}
\caption{Expected revenue as function of price (average of 100 independent runs for each combination of parameters). Last estimated prices are $18.2$ and $98.7$.} Simulation parameters: $\lambda=0.5,\: \mu=1,\: \pmb{\gamma}=\{0.1,\:0.05\},\: \pmb{w}=\{0.3,\: 0.7\},\: C=1,\: p=5$. \label{fig:price_convergence_hypexponential}
\end{figure}

We conclude that the proposed iterative pricing algorithm performs well also in the 
hyperexponential setting, even though parameter estimation in this case requires 
substantially more data than in the exponential model. As the hyperexponential family can be used to approximate a broad class of distributions, this further suggests that specific knowledge of the actual parametric form of the true service value distribution is not essential for the revenue maximization problem. This is especially true when taking hyperexponential distributions with a higher parametric dimension. Note, however, this comes at a cost, in terms of computational effort and data collection, as explained in Section~\ref{sec:hyperexponential-service-value.}.


\section{Conclusions and discussion.}\label{sec:conclusions-and-extensions.}

In this work, we relaxed the assumption of homogeneous value from completed service ($R$) in Naor's model. We proposed maximum likelihood estimator for an unknown parameter of the known distribution of $R$ based on queue length data. The use of queue data allows to estimate the parameter of distribution of $R$ indirectly, without observing samples of $R$ directly. We showed that, under standard assumptions, the estimator is consistent, and the normalized errors are asymptotically normal around 0 with a covariance matrix that can be computed. These results were demonstrated using simulation experiments, assuming exponential or hyperexponential distributions for $R$. Additionally, an iterative price maximization algorithm based on the estimator was constructed and applied to simulation data to demonstrate the estimator's usefulness.

The main advantage of our parametric framework, in comparison with black-box methods, is that the parameters have a concrete interpretation for the underlying economic model. Moreover, this approach enables sensitivity analysis of the learning and optimization performance with respect to the model and algorithm parameters. 



We emphasize that revenue optimization is only one of possible applications. Pricing algorithm can optimize other target functions. For example, same algorithm can be used to find socially optimal price. For this purpose, revenue function should be replaced with stationary social welfare per unit of time

\[
\mathrm{SW}(p,\pmb{\theta})
=\lambda\sum_{q=0}^{\infty}\pi_{\pmb{\theta}}(q)\int_{r(q)}^{\infty}\bigl(1-F_{\pmb{\theta}}(x)\bigr)\,dx.
\]

where $\pi_{\pmb{\theta}}(q)$ and $\bigl(1-F_{\pmb{\theta}}(x)\bigr)\,dx$ can be approximated via truncation. Algorithm 1 can be applied directly to the social welfare optimization problem, with the only change is in the first order condition.

Alternatively, a similar iterative algorithm can be developed for optimizing the service rate when the  administrator cannot control the admission price but can control service capacity. For example, if the service rate \(\mu\) is a controllable decision variable (e.g., through staffing or resource allocation), one can replace the price update step with an update of \(\mu\) and maximize the chosen stationary objective (revenue, social welfare, or a cost-penalized objective) over \(\mu\), using the current parameter estimates. Since \(\mu\) affects both the delay term \(r(q)=p+(q+1)C/\mu\) and the stationary queue-length distribution, the same estimation--optimization loop applies, with the decision variable changed from \(p\) to \(\mu\).

Another advantage of estimator based on queue length only is simplicity of data collection in real scenario, as data collection of balking customers if often difficult, or even impossible. However, if such data is available, it brings additional information which can be utilized. In this case, the ``balking'' event is added to the jump chain. The probability of such event is $\lambda F(r(q_i))/(F(r(q_i))+\mu)$. Each time such event occur, the MLE is multiplies by the corresponding probability. Such additional information would decrease the variance of the estimator.

The estimation procedure presented here can be taken a step further to a non-parametric approach where for each queue length $q_i$ a different state dependent arrival rate $\lambda_i$ is estimated. This does not require assuming a specific decision making mechanism for the customers; i.e., this can capture heterogeneity in rewards, delay sensitivity or simply noisy information. This is appealing because it is very general but is less interpretable from a modeling perspective. This would also entail new technical difficulties and bias that comes along with with nonparametric inference (estimating an infinite sequence in this case).

A final observation is that it is straightforward to extend our analysis to more general queues, such as M/M/s or M/M/1/r (which still have an underlying birth-death process). Another possibility is to relax another assumption of Naor's model—the homogeneity of customers' patience ($C$). A similar approach can be used to construct an MLE for a common joint distribution of $R$ and $C$. In such a setting, a customer joins the queue if $\frac{R-p}{C}\geq \frac{q_{i}+1}{\mu}$. The distribution of $\frac{R-p}{C}$ can be viewed as one with an additional known parameter $p$. In a special case where $p=0$, our estimator can serve as the MLE for the parameter of the common distribution of $\frac{R}{C}$. For $p>0$, estimating the joint distribution of $R$ and $C$ for a fixed price becomes less straightforward. One possibility is to construct a two-step procedure that first estimates the parameters for different prices, then seeks an optimal solution in the second step.

A similar approach is often found in information technology, where a new service is initially offered for free to attract as many users as possible and study their characteristics. Later, the service becomes paid. The proposed approach assumes that the cost to a customer for staying in the system per unit of time is a linear function of the expected service time $\frac{q_{i}+1}{\mu}$, with service times exponentially distributed. Once these assumptions are removed and the cost to a customer for staying in the system per unit of time is considered as some distribution of expected waiting time, $C(w(q_i))$, more general models can be constructed.



\pagebreak

\begin{appendices}
\section{Proofs}\label{appendixes}

First we introduce a few expressions frequently used throughout the
proofs. Recall that the gradient of the log-likelihood is given by
\[\nabla\log \mathcal{L}(\pmb{\theta}|Q_k,Y_k):=\pmb{\Psi}_k(\pmb{\theta})=\frac{1}{k}\sum_{i\in M}\pmb{\psi}(Q_{i-1},Y_i,\pmb{\theta})\]
where,   for any $q\geq 0$ and $y\in\{0,1\}$,
\begin{align}\label{eq:psi}
\pmb{\psi}(q,y,\pmb{\theta})=
\begin{bmatrix}
y \frac{p'_{\theta_1}(q,\pmb{\theta})}{p(q,\pmb{\theta})}
- (1-y) \frac{p'_{\theta_1}(q,\pmb{\theta})}{1-p(q,\pmb{\theta})}
\\
\\
\vdots
\\
\\
y \frac{p'_{\theta_n}(q,\pmb{\theta})}{p(q,\pmb{\theta})}
- (1-y) \frac{p'_{\theta_n}(q,\pmb{\theta})}{1-p(q,\pmb{\theta})}
\end{bmatrix}
\end{align}
The upward transition probability (given $\pmb{\theta}$) from state $q$ is
\begin{align}\label{eq:defp}
p(q,\pmb{\theta})=\frac{\lambda(1-F(r(q)),\pmb{\theta})))}{\mu+\lambda(1-F(r(q),\pmb{\theta}))},
\end{align}
where $r(q)=p+\frac{(q+1)C}{\mu}$.  For any $j=1,\ldots,n$, taking derivative with respect to $\theta_j$ yields
\begin{align}\label{eq:derp}
p'_{\theta_j}(q,\pmb{\theta}):=\frac{\partial p(q,\pmb{\theta})}{\partial \theta_j}=
-\frac{\mu\lambda F'_{\theta_j}(r(q),\pmb{\theta})}{(\mu+\lambda(1-F(r(q),\pmb{\theta})^2},
\end{align}
and once more with respect to $\theta_l$, for any $l=1,\ldots,n$,
\begin{equation}\label{eq:derp2}
p''_{\theta_j \theta_l}(q,\pmb{\theta}) =\frac{-\mu\lambda[F''_{\theta_j\theta_l}(r(q),\pmb{\theta})(\mu+\lambda(1-F(r(q),\pmb{\theta})+2\lambda F'_{\theta_j}(r(q),\pmb{\theta})F'_{\theta_l}(r(q),\pmb{\theta})]}{(\mu+\lambda(1-F(r(q),\pmb{\theta})^3},
\end{equation}
where $p''_{\theta_j \theta_l}(q,\pmb{\theta}) := \frac{\partial p'_{\theta_j}(q,\pmb{\theta})}{\partial \theta_l}$. 

By Assumption~A2, all of the expressions above are well defined for any $(q,y)$ such that $F(r(q),\pmb{\theta})\in(0,1)$, and the first derivative $p'_{\theta_j}(q,\pmb{\theta})$ is continuous w.r.t. $\pmb{\theta}$ for every $j=1,\ldots,n$. Note that for our results the second derivative need not be continuous, i.e., $F''$ exists but may have discontinuities.

\subsection{Consistency (proofs of Lemmas
1-5)}\label{appendix-1.-proof-of-lemmas-1-5.}

\setcounter{lemma}{0}
\begin{lemma}
If $F(r(q),\pmb{\theta})$ is continuous, twice differentiable and has a continuous first derivative, w.r.t. $\pmb{\theta}\in \pmb{\Theta}$ (Assumption~A2), then  $\pmb{\psi}_i(q,y,\pmb{\theta})$ is continuous  w.r.t. $\pmb{\theta}\in \pmb{\Theta}$, for any $(q,y)$ such that $F(r(q),\pmb{\theta})\in(0,1)$.
\end{lemma}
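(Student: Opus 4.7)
The plan is to show continuity of $\pmb{\psi}(q,y,\pmb{\theta})$ componentwise by decomposing each entry into compositions and quotients of continuous functions, and then checking that the denominators $p(q,\pmb{\theta})$ and $1-p(q,\pmb{\theta})$ stay strictly positive on the set where $F(r(q),\pmb{\theta})\in(0,1)$. Since each component of $\pmb{\psi}$ is a linear combination (with coefficients $y$ and $1-y$, which do not depend on $\pmb{\theta}$) of the two ratios $p'_{\theta_j}/p$ and $p'_{\theta_j}/(1-p)$, it is enough to show that $p'_{\theta_j}(q,\cdot)$ is continuous on $\pmb{\Theta}$ and that $p(q,\cdot)$ and $1-p(q,\cdot)$ are continuous and nonvanishing on $\pmb{\Theta}$.

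First I would look at $p(q,\pmb{\theta})$ in \eqref{eq:defp}. Since $F(r(q),\cdot)$ is continuous by Assumption~A2, the numerator $\lambda(1-F(r(q),\pmb{\theta}))$ and the denominator $\mu+\lambda(1-F(r(q),\pmb{\theta}))$ are continuous in $\pmb{\theta}$. The denominator is bounded below by $\mu>0$, so the quotient is continuous in $\pmb{\theta}$. Moreover, on the set $\{F(r(q),\pmb{\theta})\in(0,1)\}$ we have $1-F(r(q),\pmb{\theta})\in(0,1)$, hence $p(q,\pmb{\theta})\in(0,1)$ and therefore $1-p(q,\pmb{\theta})\in(0,1)$ as well. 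Both denominators in \eqref{eq:psi} are thus strictly positive.

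Next I would argue continuity of $p'_{\theta_j}(q,\pmb{\theta})$ from \eqref{eq:derp}. By Assumption~A2 the partial derivatives $F'_{\theta_j}(r(q),\cdot)$ are continuous on $\pmb{\Theta}$, and the denominator $(\mu+\lambda(1-F(r(q),\pmb{\theta})))^2$ is a continuous positive function of $\pmb{\theta}$ (bounded below by $\mu^2$). Hence $p'_{\theta_j}(q,\cdot)$ is continuous on $\pmb{\Theta}$ as a ratio of continuous functions with nonvanishing denominator. Combining these facts, each entry of $\pmb{\psi}(q,y,\pmb{\theta})$ is a sum, product, and quotient of continuous functions whose denominators $p(q,\pmb{\theta})$ and $1-p(q,\pmb{\theta})$ do not vanish on $\{F(r(q),\pmb{\theta})\in(0,1)\}$, so each entry is continuous in $\pmb{\theta}$, which proves the lemma.

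There is no serious obstacle here; the only subtle point is to record that the restriction to $(q,y)$ with $F(r(q),\pmb{\theta})\in(0,1)$ is exactly what is needed to keep $p$ and $1-p$ away from $\{0,1\}$, so that the ratios appearing in \eqref{eq:psi} are not merely well defined pointwise but continuous. This is also the reason we are justified in restricting to the effective sample indexed by $M$ in the consistency argument that uses this lemma.
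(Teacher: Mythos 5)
Your proposal is correct and follows essentially the same route as the paper's proof: observe that the denominators in \eqref{eq:defp} and \eqref{eq:derp} are bounded away from zero, note that $p(q,\pmb{\theta})\in(0,1)$ precisely when $F(r(q),\pmb{\theta})\in(0,1)$, and conclude continuity of $\pmb{\psi}$ from continuity of $F$ and $\nabla F$ via algebra of continuous functions. Your version is simply more explicit about the intermediate steps than the paper's two-line argument.
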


\begin{proof} Observe that the denominator of \eqref{eq:derp} is strictly positive for any $q$ such that $F(r(q),\pmb{\theta})\in(0,1)$.  Therefore, as $F$ and its first derivative is continuous with respect to $\pmb{\theta}\in\pmb{\Theta}$,  we conclude from \eqref{eq:psi} and \eqref{eq:defp} that $\pmb{\psi}$ is also a continuous function. 

\end{proof}

\begin{lemma}
Suppose Assumption A3 holds, then
\begin{align*}
\max_{j=1\ldots,n}\left|\pmb{\Psi}_k(\pmb{\pmb{\theta}})\right|_j\leq 2 H(q)\ , \forall q: F(r(q),\pmb{\theta})\in(0,1),
\end{align*}
 where $H$ is the integrable function in Assumption~A3.
\end{lemma}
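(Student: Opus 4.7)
The plan is to bound each coordinate of $\pmb{\psi}(q,y,\pmb{\theta})$ (reading the statement as a bound on the summand, since the right-hand side depends on a single $q$) directly via the explicit formulas \eqref{eq:psi}, \eqref{eq:defp}, and \eqref{eq:derp}. The key algebraic step is to cancel the powers of $\mu+\lambda(1-F(r(q),\pmb{\theta}))$ in the two ratios
\[
\frac{p'_{\theta_j}(q,\pmb{\theta})}{p(q,\pmb{\theta})} \quad\text{and}\quad \frac{p'_{\theta_j}(q,\pmb{\theta})}{1-p(q,\pmb{\theta})}.
\]
Using $1-p(q,\pmb{\theta})=\mu/(\mu+\lambda(1-F(r(q),\pmb{\theta})))$, a short calculation gives
\[
\frac{p'_{\theta_j}(q,\pmb{\theta})}{p(q,\pmb{\theta})} = -\frac{\mu\, F'_{\theta_j}(r(q),\pmb{\theta})}{(\mu+\lambda(1-F(r(q),\pmb{\theta})))\,(1-F(r(q),\pmb{\theta}))},
\]
\[
\frac{p'_{\theta_j}(q,\pmb{\theta})}{1-p(q,\pmb{\theta})} = -\frac{\lambda\, F'_{\theta_j}(r(q),\pmb{\theta})}{\mu+\lambda(1-F(r(q),\pmb{\theta}))}.
\]
On the event $F(r(q),\pmb{\theta})\in(0,1)$ (i.e.\ $q$ belongs to the effective sample set $M$) both denominators are strictly positive, so the expressions are well defined.

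Next I would absorb the queue-dependent prefactors. For the first ratio, $\mu/(\mu+\lambda(1-F))\le 1$. For the second, $\lambda(1-F)\le \mu+\lambda(1-F)$ gives $\lambda/(\mu+\lambda(1-F))\le 1/(1-F)$. Combining these with Assumption~A3,
\[
\max\!\left(\left|\frac{p'_{\theta_j}(q,\pmb{\theta})}{p(q,\pmb{\theta})}\right|,\ \left|\frac{p'_{\theta_j}(q,\pmb{\theta})}{1-p(q,\pmb{\theta})}\right|\right)\le \frac{|F'_{\theta_j}(r(q),\pmb{\theta})|}{1-F(r(q),\pmb{\theta})}\le H(q).
\]

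Finally, applying the triangle inequality to $\psi^j(q,y,\pmb{\theta})=y\,p'_{\theta_j}/p-(1-y)\,p'_{\theta_j}/(1-p)$ and using $|y|,|1-y|\le 1$ yields $|\psi^j(q,y,\pmb{\theta})|\le 2H(q)$ uniformly in $y\in\{0,1\}$ and $\pmb{\theta}\in\pmb{\Theta}$. Since the bound is uniform in $j$, the claim follows. There is no real obstacle here; the only subtlety is keeping track of the condition $F(r(q),\pmb{\theta})\in(0,1)$ so that the denominators never vanish, which is exactly the restriction to $M$ used throughout the paper. This bound is precisely what is needed in the subsequent proof of Lemma~\ref{lem:ergodic} to pass from pointwise to uniform convergence of $\pmb{\Psi}_k$ via an integrable envelope.
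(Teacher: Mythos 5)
Your proof is correct and follows essentially the same route as the paper: cancel the common factor of $\mu+\lambda(1-F(r(q),\pmb{\theta}))$ in the two ratios, bound each by $|F'_{\theta_j}(r(q),\pmb{\theta})|/(1-F(r(q),\pmb{\theta}))\le H(q)$, and apply the triangle inequality with $y\in\{0,1\}$ to get the factor of $2$. Your reading of the statement as a bound on the summand $\pmb{\psi}(q,y,\pmb{\theta})$ is also the intended one, matching what the paper actually proves.
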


\begin{proof}
For any $j=1,\ldots,n$,
\begin{align*}
\psi^j(q,y,\pmb{\theta}) =y\frac{p'_{\theta_j}(q,\pmb{\theta})}{p(q,\pmb{\theta})} +(1-y) \frac{p'_{\theta_j}(q,\pmb{\theta})}{1-p(q,\pmb{\theta})},
\end{align*}
hence, as $y\in\{0,1\}$, the triangle inequality yields
\begin{equation}\label{eq:triangle}
\left|\psi^j(q,y,\pmb{\theta})\right|\leq  \left| \frac{p'_{\theta_j}(q,\pmb{\theta})}{p(q,\pmb{\theta})}\right|+\left|\frac{p'_{\theta_j}(q,\pmb{\theta})}{1-p(q,\pmb{\theta})}\right| ,
\end{equation}
By \eqref{eq:defp} and \eqref{eq:derp},
\begin{equation}\label{eq:lem_bound1}
\left|\frac{p'_{\theta_j}(q,\pmb{\theta})}{1-p(q,\pmb{\theta})}\right|=\left|-\frac{\lambda F'_{\theta_j}(r(q),\pmb{\theta})}{\mu+\lambda(1-F(r(q),\pmb{\theta}))} \right|<\left|\frac{ F'_{\theta_j}(r(q),\pmb{\theta}))}{1-F(r(q),\pmb{\theta})}\right| ,
\end{equation}
and
\[
\frac{p'_{\theta_j}(q,\pmb{\theta})}{p(q,\pmb{\theta})}=
-\frac{\mu F_{\theta_j}'(r(q),\pmb{\theta})}
{(1-F(r(q),\pmb{\theta}))(\mu+\lambda(1-F(r(q),\pmb{\theta})))}=
\frac{-F_{\theta_j}'(r(q),\pmb{\theta})}{1-F(r(q),\pmb{\theta})}\cdot\frac{\mu}{\mu+\lambda(1-F(r(q),\pmb{\theta}))}.
\]
Note that \(0\leq F(r(q),\pmb{\theta}) \leq 1\). As a result
\[
\frac{\mu}{\mu+\lambda}\leq\frac{\mu}{\mu+\lambda(1-F(r(q),\pmb{\theta}))}\leq 1,
\]
thus,
\begin{equation}\label{eq:lem_bound2}
\left|\frac{p'_{\theta_j}(q,\pmb{\theta})}{p(q,\pmb{\theta})}\right|=
\left|\frac{-F_{\theta_j}'(r(q,\pmb{\theta})}{1-F(r(q),\pmb{\theta})}\cdot\frac{\mu}{\mu+\lambda(1-F(r(q),\pmb{\theta}))}\right|\leq \left|\frac{ F'_{\theta_j}(r(q),\pmb{\theta}))}{1-F(r(q),\pmb{\theta})}\right|. 
\end{equation}
Combining the above with \eqref{eq:triangle} and Assumption~A3 we conclude that
\begin{align*}
\max_{j=1,\ldots,n}\left| {\psi}^j(q,y,\pmb{\theta}) \right|_j\leq  2\max_{j=1,\ldots,n}\left|\frac{ \nabla_{\pmb{\theta}} F(r(q),\pmb{\theta})}{1-F(r(q),\pmb{\theta})}\right| \leq 2H(q) .
\end{align*}
The proof is completed by recalling that $\pmb{\Psi}_k(\pmb{\pmb{\theta}})=\frac{1}{k}\sum_{i=1}^k\pmb{\psi}(Q_{i-1},Y_i,\pmb{\theta})$.

\end{proof}

\begin{lemma}
A unique stationary distribution $(Q_0,Y_1)$ exists and for any function $g:\mathbb{N}\times\{0,1\}\to\mathbb{R}$ such that $\mathbb{E}[|g(Q_0,Y_1)|]<\infty$,
$$\frac{1}{k}\sum_{i\in M} g(Q_{i-1},Y_{i})\overset{a.s.}{\longrightarrow}\mathbb{E}[g(Q_0,Y_1)]\:\: \text{as} \:\:k\rightarrow\infty.$$
Further, if Assumption A3\: holds, then for any $\pmb{\theta}\in\pmb{\Theta}$,
$$\pmb{\Psi}_k(\pmb{\theta})\overset{a.s.}{\longrightarrow}\pmb{\Psi}(\pmb{\theta})\:\: \text{as }\: k\rightarrow\infty.$$
\end{lemma}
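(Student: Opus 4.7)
The plan is to establish existence of a unique stationary distribution via the classical positive-recurrence criterion for birth-death chains, invoke the strong ergodic theorem for positive recurrent Markov chains, and then apply it componentwise to $\pmb{\psi}$ using the bound from Lemma~2 together with Assumption~A3.

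First I would verify that the jump chain $(Q_i)_{i\geq 0}$ is irreducible and positive recurrent on $\mathbb{Z}_+$. Irreducibility follows because the assumption $F(C/\mu,\pmb{\theta})>0$ ensures a positive probability of up-transitions from state $0$, and for any state $q$ with $F(r(q),\pmb{\theta})\in(0,1)$ both the up-transition probability $\lambda_q/(\lambda_q+\mu)$ and the down-transition probability $\mu/(\lambda_q+\mu)$ are strictly positive. Positive recurrence follows from the standard birth-death summability criterion (see e.g.\ \cite{karlin_classification_1957}): since $F$ is a CDF and $r(q)=p+(q+1)C/\mu\to\infty$ as $q\to\infty$, we have $\lambda_q=\lambda(1-F(r(q),\pmb{\theta}))\to 0$, so the ratios $\lambda_{j-1}/\mu$ are eventually bounded by any constant less than one and the product $\xi_q$ from \eqref{xi} is summable. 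Hence a unique stationary distribution $\pi$ exists on $\mathbb{Z}_+$, and we may take $Q_0\sim\pi$.

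Next, to obtain a stationary distribution for the joint random vector $(Q_{i-1},Y_i)$, I would observe that $Y_i=\mathbb{1}(Q_i>Q_{i-1})$ is a measurable function of the consecutive states $(Q_{i-1},Q_i)$. The pair-chain $((Q_{i-1},Q_i))_{i\geq 1}$ is itself a positive recurrent, ergodic Markov chain whose stationary law is determined by $\pi$ and the transition kernel; consequently $(Q_{i-1},Y_i)$ inherits stationarity and ergodicity as a measurable image. Applying Birkhoff's ergodic theorem for positive recurrent Markov chains (e.g.\ \cite{durrett_probability_2019}) to the integrable function $g$ gives
\[
\frac{1}{k}\sum_{i=1}^{k}g(Q_{i-1},Y_i)\overset{a.s.}{\longrightarrow}\mathbb{E}[g(Q_0,Y_1)].
\]
The restriction to $i\in M$ is handled by absorbing the indicator into $g$: since $\mathbb{1}\{i\in M\}=\mathbb{1}\{F(r(Q_{i-1}),\pmb{\theta})\in(0,1)\}$ is a measurable function of $Q_{i-1}$, the ergodic theorem applies to $\tilde{g}(q,y)=g(q,y)\mathbb{1}\{F(r(q),\pmb{\theta})\in(0,1)\}$, and on the effective sample $\tilde{g}$ coincides with $g$, so the first claim follows.

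Finally, for the second claim I would apply the above componentwise with $g(q,y)=\psi^j(q,y,\pmb{\theta})$ for each $j=1,\ldots,n$. Lemma~2 provides the pointwise bound $|\psi^j(q,y,\pmb{\theta})|\leq 2H(q)$ on the effective sample, and Assumption~A3 yields $\mathbb{E}[H(Q_0)]<\infty$, so each coordinate is integrable under $\pi$. This immediately gives $\pmb{\Psi}_k(\pmb{\theta})\overset{a.s.}{\longrightarrow}\pmb{\Psi}(\pmb{\theta})$, with the limit $\pmb{\Psi}(\pmb{\theta})=\mathbb{E}[\pmb{\psi}(Q_0,Y_1,\pmb{\theta})]$ well-defined by the same integrability bound. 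The main obstacle I anticipate is the measure-theoretic bookkeeping needed to pass rigorously from ergodicity of $(Q_i)$ to that of $(Q_{i-1},Y_i)$ on the correct product space; the construction in Appendix~\AppProcessDefinition{} can be invoked to justify this step cleanly without re-deriving it here.
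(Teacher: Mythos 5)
Your proposal is correct and follows essentially the same route as the paper: existence of the stationary distribution via the birth--death summability criterion of \cite{karlin_classification_1957} (using $\lambda_q\to 0$ to get a geometric tail bound on $\xi_q$), followed by the ergodic theorem and the integrable bound $2H(q)$ from Lemma~2 together with Assumption~A3. If anything, you are slightly more careful than the paper on two points it glosses over --- passing from ergodicity of $(Q_i)$ to that of the pair $(Q_{i-1},Y_i)$, and absorbing the indicator of $i\in M$ into $g$ --- while omitting the explicit check of the second Karlin--McGregor condition, which is harmless here since irreducibility plus a stationary distribution already gives positive recurrence of the discrete-time jump chain.
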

\begin{proof}
An irreducible birth-death process with rates
\((\lambda_0,\mu_1,\lambda_1,\mu_2,\lambda_2,...,)\) is ergodic and has
a unique stationary distribution if the following conditions hold
(see \cite{karlin_classification_1957}): 
\begin{align}\label{eq:BDCond1}
\sum_{k=1}^{\infty}\prod_{q=1}^{k}\frac{\lambda_{q-1}}{\mu_q}<\infty,
\end{align}

\begin{align}\label{eq:BDCond2}
\sum_{k=1}^{\infty}\prod_{q=1}^{k}\frac{\mu_q}{\lambda_q}=\infty.
\end{align}

Recall that \(r(q)=p+\frac{C(q_{}+1)}{\mu}\). Also, \(\mu_q=\mu\) for
all \(q\). In addition, \(F(r,\pmb{\theta})\) is a CDF, therefore
\(\lim_{r \rightarrow\infty}{F(r,\pmb{\theta})}=1\), consequently

\begin{align}\label{eq:BD1}
\lim_{q_{} \rightarrow\infty}\lambda_i=
\lim_{q_{} \rightarrow\infty}\lambda\left(1-F\left(p+\frac{C(q_{}+1)}{\mu}\right)\right)=0.
\end{align}

We first verify \eqref{eq:BDCond1}. From \eqref{eq:BD1} it follows that \(\lambda_q\rightarrow 0\) as
\(q\rightarrow \infty\). Thus there exists a \(q\) such that
\(\frac{\lambda_{q-1}}{\mu}<1\). Let

\[q_0=\underset{q}{\text{arg min}}\left\{q:\frac{\lambda_{q-1}}{\mu}<1\right\},\]
then we can rewrite
\begin{align}\label{eq:BD2}
\sum_{k=1}^{\infty}\prod_{q=1}^{k}\frac{\lambda_{q-1}}{\mu}=\omega_0+
\sum_{k=q_0}^{\infty}\prod_{q=1}^{k}\frac{\lambda_{q-1}}{\mu},
\end{align}
where \(\omega_0:=\sum_{k=1}^{q_0-1}\prod_{q=1}^{k}\frac{\lambda_{q-1}}{\mu}\) is a positive finite number. Similarly,
\begin{align}\label{eq:BD3}
\prod_{q=1}^{k}\frac{\lambda_{q-1}}{\mu}=\omega_1\prod_{q=q_0}^{k}\frac{\lambda_{q-1}}{\mu},
\end{align}
where \(\omega_1\:=\prod_{q=1}^{q_0-1}\frac{\lambda_{q-1}}{\mu}\) is a finite positive number larger than \(1\). Combining \eqref{eq:BD2} and \eqref{eq:BD3} yields
\begin{align}\label{eq:BD4}
\sum_{k=1}^{\infty}\prod_{q=1}^{k}\frac{\lambda_{q-1}}{\mu}=
\omega_0+\omega_1\sum_{k=q_0}^{\infty}\prod_{q=q_0}^{k}\frac{\lambda_{q-1}}{\mu}.
\end{align}
Now, since \(F(r,\pmb{\theta})\) is monotone increasing function, we
have that for any \(q\),
\(\frac{\lambda_{q}}{\mu}<\frac{\lambda_{q-1}}{\mu}\),  and in particular
\[\frac{\lambda_{q_0+1}}{\mu}<\frac{\lambda_{q_0}}{\mu}.\]
As a result, for any \(k\geq q_0+1\)
\[\prod_{q=q_0}^{k}\frac{\lambda_{q-1}}{\mu}<
\left(\frac{\lambda_{q_0}}{\mu}\right)^{k-q_0+1}.\]
Combining this with \eqref{eq:BD4}, we have that
\[\omega_0+\omega_1\sum_{k=q_0}^{\infty}\prod_{q=q_0}^{k}\frac{\lambda_{q-1}}{\mu}<
\omega_0+\omega_1\sum_{k=q_0}^{\infty}\left(\frac{\lambda_{q_0}}{\mu}\right)^{k-q_0+1}.\]
Since \(\frac{\lambda_{q_0}}{\mu}<1\), it follows that \(\sum_{k=q_0}^{\infty}\left(\frac{\lambda_{q_0}}{\mu}\right)^{k-q_0+1}<\infty\). Thus,  \eqref{eq:BDCond1} is verified.

We next verify \eqref{eq:BDCond2}. As was shown previously, \(\lambda_q \rightarrow 0\) as
\(q \rightarrow \infty\) while \(\mu\) is constant. Thus
\(\frac{\mu_q}{\lambda_q}=\frac{\mu}{\lambda_q}\rightarrow \infty\) when
\(q \rightarrow \infty\),  hence
\[\sum_{k=1}^{\infty}\prod_{i=1}^{k}\frac{\mu}{\lambda_q}=\infty\]
as required. We conclude that \((Q_i,y)_{i\geq 0}\) is an ergodic sequence and a unique stationary distribution \(Q_0,Y_1\) exists.  Moreover, by Ergodic Theorem in \cite[Ch.~6]{durrett_probability_2019}, for any ergodic sequence, and, for any integrable function $g$ there exists a limit
$$\frac{1}{k}\sum_{i\in M} g(Q_{i-1},Y_{i})\overset{a.s.}{\longrightarrow}\mathbb{E}[g(Q_0,Y_1)] <\infty .$$
Further, if Assumption A3\: holds, then Lemma~\ref{lem:bound} implies that $\pmb{\Psi}_k(\pmb{\theta})\overset{a.s.}{\longrightarrow}\pmb{\Psi}(\pmb{\theta})$ for any $\pmb{\theta}\in\pmb{\Theta}$.
\end{proof}

\begin{lemma}
If Assumptions A3 holds, then for any $\epsilon>0$.

$$\inf_{\pmb{\theta}:d(\pmb{\theta},\pmb{\theta}_0)\geq \epsilon}||\pmb{\Psi}(\pmb{\theta})||>0=||\pmb{\Psi}(\pmb{\theta}_0)||.$$
\end{lemma}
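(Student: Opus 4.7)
The plan has two stages: verify that $\pmb{\theta}_0$ is the only zero of $\pmb{\Psi}$ on $\pmb{\Theta}$, and then upgrade this to a \emph{well-separated} root using continuity and compactness.

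First I would establish $\pmb{\Psi}(\pmb{\theta}_0) = \pmb{0}$ directly. Conditioning on $Q_{i-1}$ and using that under $\pmb{\theta}_0$ we have $Y_i \mid Q_{i-1} \sim \mathrm{Ber}(p(Q_{i-1}, \pmb{\theta}_0))$, each coordinate of the conditional score cancels:
\[
\mathbb{E}[\psi^j(Q_0, Y_1, \pmb{\theta}_0) \mid Q_0] = p(Q_0, \pmb{\theta}_0)\cdot\frac{p'_{\theta_j}(Q_0, \pmb{\theta}_0)}{p(Q_0, \pmb{\theta}_0)} - (1 - p(Q_0, \pmb{\theta}_0))\cdot\frac{p'_{\theta_j}(Q_0, \pmb{\theta}_0)}{1 - p(Q_0, \pmb{\theta}_0)} = 0.
\]
Taking expectations (Lemma~\ref{lem:ergodic} provides stationarity and integrability under A3) gives $\pmb{\Psi}(\pmb{\theta}_0) = \pmb{0}$.

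For uniqueness of the zero, I would recognise $\pmb{\Psi}(\pmb{\theta})$ as the gradient of the limiting expected log-likelihood
\[
\ell(\pmb{\theta}) := \mathbb{E}\bigl[Y_1 \log p(Q_0, \pmb{\theta}) + (1 - Y_1) \log (1 - p(Q_0, \pmb{\theta}))\bigr].
\]
Conditioning on $Q_0$ and replacing $\mathbb{E}[Y_1 \mid Q_0]$ by $p(Q_0, \pmb{\theta}_0)$, the difference $\ell(\pmb{\theta}_0) - \ell(\pmb{\theta})$ equals the expectation (under the stationary law of $Q_0$) of the KL divergence between the Bernoulli laws with means $p(Q_0, \pmb{\theta}_0)$ and $p(Q_0, \pmb{\theta})$, which is nonnegative and vanishes only when $p(Q_0, \pmb{\theta}) = p(Q_0, \pmb{\theta}_0)$ almost surely. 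Since $p(q, \pmb{\theta})$ is a strictly monotone function of $1 - F(r(q), \pmb{\theta})$ and the stationary law of $Q_0$ charges infinitely many states (Lemma~\ref{lem:ergodic}), identifiability of the family $\{F_{\pmb{\theta}}\}$ then forces $\pmb{\theta} = \pmb{\theta}_0$.

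To conclude, I would invoke Assumption~A1, so that $\{\pmb{\theta} \in \pmb{\Theta} : d(\pmb{\theta}, \pmb{\theta}_0) \geq \epsilon\}$ is compact, together with continuity of $\pmb{\Psi}$ on $\pmb{\Theta}$ --- inherited from Lemma~\ref{lem:continuity} by passing continuity through the expectation via dominated convergence with the envelope of Lemma~\ref{lem:bound}. The continuous function $\|\pmb{\Psi}(\cdot)\|$ then attains its minimum on the compact slice, and by the previous paragraph this minimum must be strictly positive.

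The main obstacle is the jump from \emph{$\pmb{\theta}_0$ is the unique maximizer of $\ell$} to \emph{$\pmb{\theta}_0$ is the unique zero of $\nabla \ell = \pmb{\Psi}$}, since a priori $\ell$ could have saddle points or other stationary points on $\pmb{\Theta}$. A clean way to close this gap is to show that $-\nabla \pmb{\Psi}(\pmb{\theta})$ is positive semi-definite throughout $\pmb{\Theta}$ --- this is exactly the Fisher-information form that already appears in Assumption~A4 at $\pmb{\theta}_0$, and extending its positive-definiteness off the true parameter makes $\ell$ strictly concave, identifying the unique maximizer with the unique critical point.
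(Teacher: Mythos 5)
Your first stage --- conditioning on $Q_0$, using $Y_1\mid Q_0\sim\mathrm{Ber}(p(Q_0,\pmb{\theta}_0))$ and the law of total expectation to cancel the two terms of the score and conclude $\pmb{\Psi}(\pmb{\theta}_0)=\pmb{0}$ --- is exactly the computation in the paper's proof. Where you diverge is the harder half of the claim. The paper writes $\Psi^j(\pmb{\theta})=\mathbb{E}\bigl[p'_{\theta_j}(Q_0,\pmb{\theta})\bigl(\tfrac{p(Q_0,\pmb{\theta}_0)}{p(Q_0,\pmb{\theta})}-\tfrac{1-p(Q_0,\pmb{\theta}_0)}{1-p(Q_0,\pmb{\theta})}\bigr)\bigr]$ and then simply asserts that this vanishes ``only for $\pmb{\theta}=\pmb{\theta}_0$''; neither uniqueness of the root nor the well-separation $\inf_{d(\pmb{\theta},\pmb{\theta}_0)\geq\epsilon}\|\pmb{\Psi}(\pmb{\theta})\|>0$ is actually argued. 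You instead take the standard M-estimation route: identify $\pmb{\Psi}$ as the gradient of the limiting expected log-likelihood $\ell$, show $\ell(\pmb{\theta}_0)-\ell(\pmb{\theta})$ is an expected Bernoulli KL divergence, invoke identifiability to pin down the maximizer, and use compactness plus continuity of $\pmb{\Psi}$ to get separation. This is a more substantive and more honest treatment of the part the paper glosses over.

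Two caveats. First, your identifiability step (that $p(Q_0,\pmb{\theta})=p(Q_0,\pmb{\theta}_0)$ a.s.\ forces $\pmb{\theta}=\pmb{\theta}_0$) uses an assumption on the family $\{F_{\pmb{\theta}}\}$ that is not among A1--A4, so as written neither your argument nor the paper's derives the lemma from the stated hypotheses alone; you at least make the needed hypothesis explicit. Second, the gap you flag --- unique maximizer of $\ell$ versus unique critical point of $\ell$ --- is genuine, and your proposed repair is not free: the identity $-\pmb{\nabla}\pmb{\Psi}(\pmb{\theta})=\mathbb{E}[\pmb{\psi}\pmb{\psi}^T]$ established in Lemma~\ref{lem:var} holds only at $\pmb{\theta}=\pmb{\theta}_0$, because it is precisely there that the conditional mean of $Y_1$ matches $p(Q_0,\pmb{\theta})$ and the second-derivative terms $p''_{\theta_j\theta_l}$ cancel. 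Away from $\pmb{\theta}_0$ those terms survive and $-\pmb{\nabla}\pmb{\Psi}(\pmb{\theta})$ need not be positive semi-definite, so global concavity of $\ell$ would require yet another assumption. The lemma as stated therefore remains only partially proved under both your argument and the paper's; the honest conclusion is that an identifiability condition (and, for your route, either concavity or a direct argument ruling out spurious critical points) should be added to the assumption list.
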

\begin{proof}
Applying Lemma~\ref{lem:ergodic} together with \eqref{eq:lem_bound1}, for any $\pmb{\theta}$ there exists a limit as $k\to\infty$,
\[
\frac{1}{k}\sum_{i\in M}\left[Y_i \frac{p'_{\theta_j}(Q_{i-1},\pmb{\theta})}{p(Q_{i-1},\pmb{\theta})}\right]\overset{a.s.}{\longrightarrow}
\mathbb{E}\left[ Y_1 \frac{p'_{\theta_j}(Q_0,\pmb{\theta})}{p(Q_0,\pmb{\theta})}\right] <\infty.
\]
Now, using the law of total expectation
\[\mathbb{E}\left[ Y_1 \frac{p'_{\theta_j}(Q_0,\pmb{\theta})}{p(Q_0,\pmb{\theta})}\right]=
\mathbb{E}\left[\mathbb{E}\left[ Y_1 \frac{p'_{\theta_j}(Q_0,\pmb{\theta})}{p(Q_0,\pmb{\theta})}|Q_0\right]\right].\]
Observe that given \(Q_0\) the distribution of \(Y_1\) is Bernoulli with probability $p(Q_0,\pmb{\theta}_0)$, therefore 
\[
\mathbb{E}\left[\mathbb{E}\left[ Y_1 \frac{p'_{\theta_j}(Q_0,\pmb{\theta})}{p(Q_0,\pmb{\theta})}\right]|Q_0\right]=
\mathbb{E}\left[\frac{p'_{\theta_j}(Q_0,\pmb{\theta})}{p(Q_0,\pmb{\theta})}\mathbb{E}\left[ Y_1|Q_0 \right]\right]=
\mathbb{E}\left[\frac{p'_{\theta_j}(Q_0,\pmb{\theta})}{p(Q_0,\pmb{\theta})}p(Q_0,\pmb{\theta}_0)\right].
\]
Only for \(\pmb{\theta}=\pmb{\theta}_0\) we obtain
\begin{align}\label{eq:thm3.1}
\mathbb{E}\left[\mathbb{E}\left[ Y_1 \frac{p'_{\theta_j}(Q_0,\pmb{\theta}_0)}{p(Q_0,\pmb{\theta}_0)}\right]|Q_0\right]=\mathbb{E}\left[p'_{\theta_j}(Q_0,\pmb{\theta}_0)\right]
\end{align}
We will use similar arguments for the second part of the estimating
equation. Specifically, by Lemma~\ref{lem:ergodic} and \eqref{eq:lem_bound2},
\[
\frac{1}{k}\sum_{i\in M}\left[ (1-y) \frac{p'_{\theta_j}(Q_{i-1},\pmb{\theta})}{1-p(Q_{i-1},\pmb{\theta})}\right]\overset{a.s.}{\longrightarrow}
\mathbb{E}\left[ (1-Y_1) \frac{p'_{\theta_j}(Q_0,\pmb{\theta})}{1-p(Q_0,\pmb{\theta})}\right]<\infty .
\]

Applying the law of total expectation yields,
\begin{align*}
\mathbb{E}\left[ (1-Y_1) \frac{p'_{\theta_j}(Q_0,\pmb{\theta})}{1-p(Q_0,\pmb{\theta})}\right] &=
\mathbb{E}\left[\mathbb{E}\left[ (1-Y_1) \frac{p'_{\theta_j}(Q_0,\pmb{\theta})}{1-p(Q_0,\pmb{\theta})}|Q_0\right]\right] 
=\mathbb{E}\left[\frac{p'_{\theta_j}(Q_0,\pmb{\theta})}{1-p(Q_0,\pmb{\theta})}\mathbb{E}\left[1- Y_1|Q_0 \right]\right] \\
&=
\mathbb{E}\left[\frac{p'_{\theta_j}(Q_0,\pmb{\theta})}{1-p(Q_0,\pmb{\theta})}(1-p(Q_0|\pmb{\theta}_0))\right] .
\end{align*}
Plugging in \(\pmb{\theta}=\pmb{\theta}_0\) yields
\begin{align}\label{eq:thm3.2}
\mathbb{E}\left[ (1-Y_1) \frac{p'_{\theta_j}(Q_0,\pmb{\theta}_0)}{1-p(Q_0,\pmb{\theta}_0)}\right]=\mathbb{E}\left[p'_{\theta_j}(Q_0,\pmb{\theta}_0)\right].
\end{align}
Then combination of \eqref{eq:thm3.1},  \eqref{eq:thm3.2} and \eqref{eq:psi}
completes the proof of Lemma~\ref{lem:unique}.
\end{proof}

\begin{lemma}
If Assumptions A1, A2, A3 hold, then
$$\pmb{\Psi}_k(\hat{\pmb{\theta}}_k)\overset{a.s.}{\longrightarrow}\pmb{0}\:$$
and consequently
$$\pmb{\Psi}_k(\hat{\pmb{\theta}}_k)\overset{P}{\longrightarrow}\pmb{0}\:.$$
\end{lemma}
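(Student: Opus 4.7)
The key observation is that $\hat{\pmb{\theta}}_k$ is by definition an argmax of the continuous log-likelihood on the compact set $\pmb{\Theta}$, and whenever that argmax lies in the interior $\pmb{\Theta}^{\mathrm{o}}$, Fermat's rule forces $\pmb{\Psi}_k(\hat{\pmb{\theta}}_k)=\pmb{0}$ exactly. Since $\pmb{\theta}_0\in\pmb{\Theta}^{\mathrm{o}}$ by Assumption~A1, the entire lemma will follow once one proves $\hat{\pmb{\theta}}_k\to\pmb{\theta}_0$ almost surely: the MLE is then interior for every $k$ past some random threshold $K$, so $\pmb{\Psi}_k(\hat{\pmb{\theta}}_k)=\pmb{0}$ a.s.\ for $k>K$, yielding both the a.s.\ and in-probability statements. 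Crucially, I cannot use Theorem~\ref{theorem1} here, since the theorem itself depends on this lemma; instead, I would run an M-estimator consistency argument directly on the log-likelihood.

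To that end, define $m_k(\pmb{\theta}):=k^{-1}\log\mathcal{L}(\pmb{\theta}|Q_k)$ and $m(\pmb{\theta}):=\mathbb{E}[Y_1\log p(Q_0,\pmb{\theta})+(1-Y_1)\log(1-p(Q_0,\pmb{\theta}))]$. On the effective index set $M$ we have $\mu/(\mu+\lambda)\leq p(q,\pmb{\theta})\leq\lambda/(\mu+\lambda)$, so each summand of $m_k$ is bounded uniformly in $(q,y,\pmb{\theta})$ by a trivially integrable envelope. Lemma~\ref{lem:ergodic} then gives pointwise $m_k\to m$ a.s., and the same recipe used for $\pmb{\Psi}_k$ in the proof of Theorem~\ref{theorem1} --- continuity in $\pmb{\theta}$ from A2, the integrable envelope above, and compactness from A1, fed into \cite[Thm.~16a]{book_F1996} --- upgrades this to uniform convergence $\sup_{\pmb{\theta}\in\pmb{\Theta}}|m_k(\pmb{\theta})-m(\pmb{\theta})|\to 0$ a.s.

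Next, identify $\pmb{\theta}_0$ as the unique maximiser of $m$. Conditioning on $Q_0$ and applying the Bernoulli Kullback--Leibler inequality $p_0\log p+(1-p_0)\log(1-p)\leq p_0\log p_0+(1-p_0)\log(1-p_0)$ with $p_0:=p(Q_0,\pmb{\theta}_0)$ and $p:=p(Q_0,\pmb{\theta})$ gives $m(\pmb{\theta})\leq m(\pmb{\theta}_0)$, with equality only when $p(Q_0,\pmb{\theta})=p(Q_0,\pmb{\theta}_0)$ almost surely in $Q_0$. Lemma~\ref{lem:unique}, which pins down $\pmb{\theta}_0$ as the unique zero of $\pmb{\Psi}=\nabla m$, rules this out for $\pmb{\theta}\neq\pmb{\theta}_0$. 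A standard argmax argument then closes things: for any $\epsilon>0$, the compactness of $\{\pmb{\theta}:\|\pmb{\theta}-\pmb{\theta}_0\|\geq\epsilon\}$ and continuity of $m$ yield a gap $\delta:=m(\pmb{\theta}_0)-\sup\{m(\pmb{\theta}):\|\pmb{\theta}-\pmb{\theta}_0\|\geq\epsilon\}>0$, and combining $m_k(\hat{\pmb{\theta}}_k)\geq m_k(\pmb{\theta}_0)$ with $\sup_{\pmb{\theta}\in\pmb{\Theta}}|m_k-m|<\delta/2$ for all large $k$ forces $\|\hat{\pmb{\theta}}_k-\pmb{\theta}_0\|<\epsilon$, so $\hat{\pmb{\theta}}_k\to\pmb{\theta}_0$ a.s.

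The main obstacle is this uniqueness step, specifically the identifiability bridge from Lemma~\ref{lem:unique} (the score has a unique zero) to the statement required here (the limit log-likelihood has a unique maximiser). The Bernoulli KL inequality only delivers a weak inequality, and promoting it to strict separation on $\{\|\pmb{\theta}-\pmb{\theta}_0\|\geq\epsilon\}$ amounts to arguing that $\pmb{\theta}\mapsto p(\cdot,\pmb{\theta})$ is injective on the support of $Q_0$ --- a condition that must be extracted from Lemma~\ref{lem:unique} together with Assumption~A2. Once that is in hand, the final step is automatic: $\hat{\pmb{\theta}}_k\in\pmb{\Theta}^{\mathrm{o}}$ eventually, so $\pmb{\Psi}_k(\hat{\pmb{\theta}}_k)=\pmb{0}$ identically for large $k$, and the a.s.\ convergence implies convergence in probability.
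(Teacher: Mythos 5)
Your route is genuinely different from the paper's. The paper never establishes consistency of $\hat{\pmb{\theta}}_k$ before this lemma; it works entirely with the score: Lemmas~\ref{lem:continuity}--\ref{lem:ergodic} give uniform a.s.\ convergence $\pmb{\Psi}_k\to\pmb{\Psi}$ on $\pmb{\Theta}$, Lemma~\ref{lem:unique} makes $\pmb{\Psi}$ bounded away from zero outside $\pmb{\Theta}^{\mathrm{o}}$, and from the nonvanishing of $\pmb{\Psi}_k$ off the interior for large $k$ the paper concludes the maximizer is eventually interior, hence a critical point. Your Wald-type argument on the averaged log-likelihood $m_k$ is a legitimate alternative and in one respect cleaner: once $\hat{\pmb{\theta}}_k\to\pmb{\theta}_0\in\pmb{\Theta}^{\mathrm{o}}$ a.s., interiority and the exact first-order condition follow immediately, whereas ``gradient nonzero on the boundary'' does not by itself exclude a boundary maximizer. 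The identifiability bridge you worry about does resolve as you hoped: by the Bernoulli KL inequality, $m(\pmb{\theta})=m(\pmb{\theta}_0)$ forces $p(Q_0,\pmb{\theta})=p(Q_0,\pmb{\theta}_0)$ a.s., which makes the factor $\frac{p(Q_0,\pmb{\theta}_0)}{p(Q_0,\pmb{\theta})}-\frac{1-p(Q_0,\pmb{\theta}_0)}{1-p(Q_0,\pmb{\theta})}$ appearing in the proof of Lemma~\ref{lem:unique} vanish, so $\pmb{\Psi}(\pmb{\theta})=\pmb{0}$ and Lemma~\ref{lem:unique} forces $\pmb{\theta}=\pmb{\theta}_0$; compactness and continuity of $m$ then give the $\delta$-gap. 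You are also right that invoking Theorem~\ref{theorem1} here would be circular, and the paper indeed avoids doing so.

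The one concrete error is the envelope. The inequality $\mu/(\mu+\lambda)\leq p(q,\pmb{\theta})$ is false; that is the lower bound for $1-p(q,\pmb{\theta})$. In fact $p(q,\pmb{\theta})=\lambda(1-F(r(q),\pmb{\theta}))/(\mu+\lambda(1-F(r(q),\pmb{\theta})))\to 0$ as $q\to\infty$, so $\log p(q,\pmb{\theta})$ is unbounded and the summands of $m_k$ admit no ``trivially integrable'' envelope; $\mathbb{E}\left[\sup_{\pmb{\theta}}\left|\log p(Q_0,\pmb{\theta})\right|\right]$ can even be infinite under A1--A3 if $1-F$ decays very fast in $r$. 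The term can still be controlled, but only by exploiting the factor $Y_i$: conditioning on $Q_0$ gives $\mathbb{E}\left[Y_1\sup_{\pmb{\theta}}\left|\log p(Q_0,\pmb{\theta})\right|\right]=\mathbb{E}\left[p(Q_0,\pmb{\theta}_0)\sup_{\pmb{\theta}}\left|\log p(Q_0,\pmb{\theta})\right|\right]$, and since $\nabla_{\pmb{\theta}}\log(1-F)=-\nabla F/(1-F)$ is bounded by $H(q)$ under A3, one has $\sup_{\pmb{\theta}}\left|\log p(q,\pmb{\theta})\right|\leq\left|\log p(q,\pmb{\theta}_0)\right|+\sqrt{n}\,\mathrm{diam}(\pmb{\Theta})H(q)+\text{const}$; the first piece is handled by $x|\log x|\leq e^{-1}$ and the second by $\mathbb{E}[H(Q_0)]<\infty$. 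The $(1-Y_i)\log(1-p)$ term is genuinely bounded. With the envelope repaired in this way, your argument goes through.
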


\begin{proof}
Lemmas~\ref{lem:continuity}-\ref{lem:ergodic} imply that $\pmb{\Psi}_k(\hat{\pmb{\theta}})$ converges uniformly on the compact set $\pmb{\Theta}$ to the function $\pmb{\Psi}(\hat{\pmb{\theta}})$ on $\pmb{\Theta}$ (see \cite[Thm.~16a]{book_F1996}). By Lemma~\ref{lem:unique} and Assumption~A1, $\pmb{\Psi}(\pmb{\theta})$) has a unique root $\pmb{\theta}_0\in\pmb{\Theta}^{\mathrm{o}}$ (the interior of the compact parameter space). Therefore, there exists a constant $\delta>0$ such that
\begin{align*}
    \inf_{\pmb{\theta}\notin\pmb{\Theta}^{\mathrm{o}}} |\pmb{\Psi}(\hat{\pmb{\theta}})|_j\geq \delta , \forall j=1,\ldots,n.
\end{align*}
Moreover, as $\pmb{\Theta}^{\mathrm{o}}\subset\pmb{\Theta}$ \eqref{eq:uniform}, implies that 
\begin{align*}
\inf_{\pmb{\theta}\notin\pmb{\Theta}^{\mathrm{o}}}\min_{j=1,\ldots,n}|\pmb{\Psi}_k(\hat{\pmb{\theta}})| \overset{\mathrm{a.s.}}{\longrightarrow}\delta >0.
\end{align*}
Thus, with probability one, there exists an integer \(N\), such that all coordinates of $\inf_{\pmb{\theta}\notin\pmb{\Theta}^{\mathrm{o}}}\pmb{\Psi}(\hat{\pmb{\theta}})$ are non-zero for any
  \(k>N\).  I.e., the MLE is given by an interior solution for any $k>N$.    We conclude that  \(\pmb{\Psi}_k(\hat{\pmb{\theta}}_k)\overset{a.s.}{\longrightarrow}0\) which also  implies \(\pmb{\Psi}_k(\hat{\pmb{\theta}}_k)\overset{p}{\longrightarrow}0\) and
  completes the proof.
\end{proof}

\subsection{Filtration of the queue-length random walk.}\label{appendix-filtration}

Previously we defined the random process \(Q_i\) in a natural and
intuitive way. Here we define \(Q_i\) in a formal way. The rigorous
definition is useful for some proofs and enables a deeper understanding
from a probabilistic angle. Firstly we define the probability space
\((\Omega_k,\mathcal{F}_i,\mathrm{P}_i)\) for \(Q_i\). Let \(\Omega_k\)
be the sample space of a queue process after \(k\) steps. \(\Omega_k\)
includes all possible paths which are sets of length \(k+1\) of the form
\(\{ 0,q_1,q_2,...,q_k \}\). Such that \(q_1=1\) and
\(q_i\in \{q_{i-1}-1,q_{i-1}+1\}\) for \({2\leq i \leq k}\).

Let \(\mathcal{F}_i\) be a \(\sigma\)-algebra of step \(i\). Define

\[\phi_m^{i-1}=\left\{ \bigcup_{j\in \Gamma_m}\omega_j \right\},\]

where

\[\Gamma_{m}=\left\{ j: w_j \in \{ 0,q_1,q_2,...,q_{i-1}, \cdot \}_m \right\},\]

where \(0,q_1,q_2,...,q_{i-1}\) is a specific path \(m\) and
``\(\cdot\)'' is some unknown path. Also, \(m\in M_{i-1}\) where
\(M_{i-1}\) is a group of all possible paths \(0,q_1,q_2,...,q_{i-1}\).

Then \(\mathcal{F}_i\) is a set of all \(m\in M_{i-1}\) sets
\(\phi_m^{i-1}\) and their unions in every possible combination, and
\(\emptyset\).

Also, every \(\phi_m^{i}\) is
\(\left\{ \bigcup_{j\in \Gamma_m}\omega_j \right\}\), \(m\in M_{i}\)
where \(M_{i}\) is a group of all possible paths
\(0,q_1,q_2,...,q_{i}\). Then for each \(m\in M_{i}\), \(\phi_m^{i}\) is
the union of \(w\) of the form
\(\{ 0,q_1,q_2,...,q_{i-1},q_{i-1}+1,\cdot \}\) or
\(\{ 0,q_1,q_2,...,q_{i-1},q_{i-1}-1,\cdot \}\). \(\mathcal{F}_i\)
include all sets \(\phi_m^{i}\) as well as their unions in every
possible combination and a \(\emptyset\).

However, for every \(m\in M_{i-1}\), \(\phi_m^{i-1}\) is a union of
\(\phi_m^{i}\) of the form
\(\{ 0,q_1,q_2,...,q_{i-1},q_{i-1}+1,\cdot \}\) with \(\phi_m^{i}\) of
the form \(\{ 0,q_1,q_2,...,q_{i-1},q_{i-1}-1,\cdot \}\). Then every
\(\phi_m^{i-1}\in \mathcal{F}_i\), as well as any union of
\(\phi_m^{i-1}\), where \(m\in M_{i-1}\) is a subset of
\(\mathcal{F}_i\). As a result,
\(\mathcal{F}_{i-1}\subseteq\mathcal{F}_i\), which allows us to conclude
that \(\{\mathcal{F}_i\}_{i\geq0}\) is filtration.

Now we define a sequence of random variables
\((Q_i)_{0\leq i\leq k}:(\Omega_k,\mathcal{F}_i,\mathrm{P}_i)\rightarrow(\mathbb{Z}^+,\{0,1,...,i\},F_i)\)
as \(i\)-th element of \(\omega\), i.e.~\(Q_i(\omega)=q_i\). The
sequence of random variables \(Q_i\) is adapted to the filtration
\(\mathcal{F}_i\).

The experiment is designed in such a way that the queue length increases
if a new customer joins earlier than the currently served person leaves
(since the service is finished) and decreases if the opposite occurs.
Thus the queue length naturally satisfies the Markov property. It
follows that \((Q_i)_{0\leq i\leq k}\) is discrete-time Markov chain
with transition matrix

\[\Pi_{j,q}=\left\{\begin{matrix}
\left\{\begin{matrix}
1 &,\:\: q>j  \\
0 &,\:\: q<j
\end{matrix}\right. &,\:\: j=0  \\
\left\{\begin{matrix}
\frac{\lambda_j}{\mu+\lambda_j} &,\:\:  q>j \\
\frac{\mu}{\mu+\lambda_j} &,\:\:  q<j
\end{matrix}\right. &,\:\: j>0
\end{matrix}\right.\]

Assuming that \(q_0=0\) the probability mass function of \(Q_i\) is
\((\Pi^i)_0\) where index \(0\) is the first row of the matrix
\((\Pi^i)\). The probability \(\mathrm{P}(Q_i=q)=(\Pi^i)_{0,q}\) and
cumulative distribution function is

\[F_{i}(q)=\sum_{j=0}^q(\Pi^i)_{0,j}\]

The probability of \(\phi_m^{i-1}\) is the probability to see a path
\(q_i,q_{i-1},q_{i-2},...,q_{0}\), which, using the Markov property, is
:

\[\mathrm{P}(\phi_m^{i-1})=\mathrm{P}(Q_i=q_i,Q_{i-1}=q_{i-1},Q_{i-2}=q_{i-2},...,Q_{0}=q_{0})=
\prod_{i=1}^kP(Q_{i}=q_{i}|Q_{i-1}=q_{i-1})=\prod_{i=1}^k\Pi_{i-1,i} \:.\]

Each group in \(\mathcal{F}_i\) is a union of \(\phi_m^{i-1}\),
i.e.~\(\cup_{m\in L}\left\{\phi_m^{i-1}\right\}\) where \(L\) is some
set of \(m\in M_{i-1}\). Thus:

\[
\mathrm{P}(\cup_{m\in L}\left\{\phi_m^{i-1}\right\})=\sum_{m\in L} \mathrm{P}(\phi_m^{i-1}) \:.
\]
Finally,  \(\pmb{\psi}_i(Q_{i-1},Y_i,\pmb{\theta})\) as defined in \eqref{eq:psi} is almost everywhere continuous, hence it is measurable and the sequence $(\pmb{\psi}_i(Q_{i-1},Y_i,\pmb{\theta}))_{i\geq 1}$ is adapted to the filtration $\mathcal{F}_i$.

\subsection{Proof of Lemma~\ref{lem:var}}\label{appendix-normal}

\begin{lemma}
If Assumptions  A1-A4 hold, then
\begin{align*}
\pmb{\nabla}\pmb{\Psi}_k^{-1}(\pmb{\theta}_0)\overset{\mathrm{a.s.}}{\longrightarrow}\pmb{\nabla}\pmb{\Psi}^{-1}(\pmb{\theta}_0)\ ,
\end{align*}
where
\begin{align*}
-\pmb{\nabla}\pmb{\Psi}(\pmb{\theta}_0)=\mathbb{E}[\pmb{\psi}(Q_0,Y_1,\pmb{\theta}_0)\pmb{\psi}(Q_0,Y_1,\pmb{\theta}_0)^T]=\pmb{\Sigma}.
\end{align*}
\end{lemma}

\begin{proof}
Recall that $\pmb{\Psi}_k(\pmb{\theta})=\frac{1}{k}\sum_{i=1}^k\pmb{\psi}(Q_{i-1},Y_i,\pmb{\theta})$.  We will first compute the ergodic limit of the Jacobian at $\pmb{\theta}_0$ (assuming, for now, that it exists),
\begin{align*}
(\nabla\pmb{\Psi}_k(\pmb{\theta}_0))_{jl}=\left(\frac{1}{k}\sum_{i=1}^k\frac{\partial}{\partial \theta_l}{\psi}^j(Q_{i-1},Y_i,\pmb{\theta})\right)_{jl} .
\end{align*}
By \eqref{eq:psi}, 
\begin{align*}
\frac{\partial}{\partial \theta_l}{\psi}^j(q,y,,\pmb{\theta})=\frac{\partial}{\partial \theta_l}\left( y \frac{p'_{\theta_j}(q,\pmb{\theta})}{p(q,\pmb{\theta})}\right)-\frac{\partial}{\partial \theta_l}\left( (1-y) \frac{p'_{\theta_j}(q,\pmb{\theta})}{1-p(q,\pmb{\theta})}\right).
\end{align*}
\[
\frac{\partial}{\partial \theta_l}\left( y \frac{p'_{\theta_j}(q,\pmb{\theta})}{p(q,\pmb{\theta})}\right)=
y\frac{p''_{\theta_j \theta_l}(q,\pmb{\theta})p(q,\pmb{\theta})-p'_{\theta_l}(q,\pmb{\theta})p'_{\theta_j}(q,\pmb{\theta})}{(p(q,\pmb{\theta}))^2},
\]
where the second derivative is well defined due to Assumption~A2.
For \(\pmb{\theta}=\pmb{\theta}_0\), the stationary mean,  i.e., with respect to the distribution of $(Q_{0},Y_1)$,  is computed using the law of total expectation,
\begin{align*}
\mathbb{E}\left[  Y_1\frac{p''_{\theta_j \theta_l}(Q_{0},\pmb{\theta}_0)p(Q_{0},\pmb{\theta}_0)-p'_{\theta_l}(Q_{0},\pmb{\theta}_0)p'_{\theta_j}(Q_{0},\pmb{\theta}_0)}{(p(Q_{0},\pmb{\theta}_0))^2} \right] 
=  \mathbb{E}\left[\frac{p''_{\theta_j \theta_l}(Q_{0},\pmb{\theta}_0)p(Q_{0},\pmb{\theta}_0)-p'_{\theta_l}(Q_{0},\pmb{\theta}_0)p'_{\theta_j}(Q_{0},\pmb{\theta}_0)}{p(Q_{0},\pmb{\theta}_0)} \right].
\end{align*}
Similarly, the derivative of the second part is 
\[
\frac{\partial}{\partial \theta_l}\left( (1-y) \frac{p'_{\theta_j}(q,\pmb{\theta})}{1-p(q,\pmb{\theta})}\right)=
(1-y)\frac{p''_{\theta_j \theta_l}(q,\pmb{\theta})(1-p(q,\pmb{\theta}))+p'_{\theta_l}(q,\pmb{\theta})p'_{\theta_j}(q,\pmb{\theta})}{(1-p(q,\pmb{\theta}))^2},
\]
Again, for \(\pmb{\theta}=\pmb{\theta}_0\), the stationary mean,  i.e., with respect to the distribution of $(Q_{0},Y_1)$, the law of total expectation again yields
\begin{align*}
\mathbb{E}\left[  (1-Y_1)\frac{p''_{\theta_j \theta_l}(Q_{0},\pmb{\theta}_0)(1-p(Q_{0},\pmb{\theta}_0))+p'_{\theta_l}(Q_{0},\pmb{\theta}_0)p'_{\theta_j}(Q_{0},\pmb{\theta}_0)}{(1-p(Q_{0},\pmb{\theta}_0))^2} \right] \\
=\mathbb{E}\left[ \frac{p''_{\theta_j \theta_l}(Q_{0},\pmb{\theta}_0)(1-p(Q_{0},\pmb{\theta}_0))+p'_{\theta_l}(Q_{0},\pmb{\theta}_0)p'_{\theta_j}(Q_{0},\pmb{\theta}_0)}{1-p(Q_{0},\pmb{\theta}_0)} \right]
\end{align*}
Observe that at $\theta=\theta_0$, for every $j,l$,
\begin{align*}
    \mathbb{E}\left[\frac{p''_{\theta_j \theta_l}(Q_{0},\pmb{\theta}_0)(1-p(Q_{0},\pmb{\theta}_0))}{1-p(Q_{0},\pmb{\theta}_0)}\right]=   \mathbb{E}\left[\frac{p''_{\theta_j \theta_l}(Q_{0},\pmb{\theta}_0)p(Q_{0},\pmb{\theta}_0)}{p(Q_{0},\pmb{\theta}_0)}\right]=\mathbb{E}\left[p''_{\theta_j \theta_l}(Q_{0},\pmb{\theta}_0)\right].
\end{align*}
As expectation is linear, combining the derivatives of the two parts yields
\begin{align*}
\mathbb{E}\left[\frac{\partial}{\partial \theta_l}{\psi}^j(Q_{0},Y_1,\pmb{\theta}_0)\right] =
\mathbb{E}\left[ \frac{-p'_{\theta_l}(Q_{0},\pmb{\theta}_0)p'_{\theta_j}(Q_{0},\pmb{\theta}_0)}{(1-p(Q_{0},\pmb{\theta}_0))p(Q_{0},\pmb{\theta}_0)}\right].
\end{align*}
For any $j,l$, plugging in the expressions in \eqref{eq:defp} and \eqref{eq:derp} yields
\begin{align*}
    \mathbb{E}\left[\frac{\partial}{\partial \theta_l}{\psi}^j(Q_{0},Y_1,\pmb{\theta}_0)\right]=-\mathbb{E}\left[ \frac{\lambda\mu F'_{\theta_j}(r(Q_0),\pmb{\theta}_0)F'_{\theta_l}(r(Q_0),\pmb{\theta}_0)}{(1-F(r(Q_0),\pmb{\theta}_0))(\mu+\lambda(1-F(r(Q_0),\pmb{\theta}_0)))^2}\right]=-\Sigma_{jl}.
\end{align*}
By Assumption A4 we have that $|\Sigma_{jl}|<\infty$, thus Lemma~\ref{lem:ergodic} implies  that for any $  j,l\in\{1,\ldots,n\}$
 \begin{align*}
(\nabla\pmb{\Psi}_k(\pmb{\theta}_0))_{jl}=\left(\frac{1}{k}\sum_{i=1}^k\frac{\partial}{\partial \theta_l}{\psi}^j(Q_{i-1},Y_i,\pmb{\theta})\right)_{jl}\overset{\mathrm{a.s.}}{\longrightarrow} -\Sigma_{jl},
\end{align*}
where 
$|\Sigma_{jl}|<\infty$ for all $j,l$.

The last remaining step is computing the stationary variance of the martingale sequence.
As $\mathbb{E}\left[{\psi}^j(Q_{0},Y_1,\pmb{\theta}_0)\right] =0$ for all $j=1,\ldots,n$, the variance of the martingale difference is the matrix
\begin{align*}
\mathbb{E}\left[\pmb{\psi}(Q_{0},Y_1,\pmb{\theta}_0)\pmb{\psi}(Q_{0},Y_1,\pmb{\theta}_0)^T\right].
\end{align*}
The entry at coordinate $j,l$ is given by 
\begin{align*}
\mathbb{E}\left[Y_1^2 \frac{p'_{\theta_j}(Q_{0},\pmb{\theta})p'_{\theta_l}(Q_{0},\pmb{\theta})}{p^2(Q_{0},\pmb{\theta})}-2Y_1(1-Y_1)\frac{p'_{\theta_j}(Q_{0},\pmb{\theta})p'_{\theta_l}(Q_{0},\pmb{\theta})}{p(Q_{0},\pmb{\theta})(1-p(Q_{0},\pmb{\theta}))}+(1-Y_1)^2\frac{p'_{\theta_j}(Q_{0},\pmb{\theta})p'_{\theta_l}(Q_{0},\pmb{\theta})}{(1-p(Q_{0},\pmb{\theta}))^2}\right],
\end{align*}
and as $Y_1(1-Y_1)=0$, applying the law of total expectation once more yields
\begin{align*}
\left(\mathbb{E}\left[\pmb{\psi}(Q_{0},Y_1,\pmb{\theta}_0)\pmb{\psi}(Q_{0},Y_1,\pmb{\theta}_0)^T\right]\right)_{jl} &= \mathbb{E}\left[  \frac{p'_{\theta_j}(Q_{0},\pmb{\theta}_0)p'_{\theta_l}(Q_{0},\pmb{\theta}_0)}{p(Q_{0},\pmb{\theta}_0)}
+\frac{p'_{\theta_j}(Q_{0},\pmb{\theta}_0)p'_{\theta_l}(Q_{0},\pmb{\theta}_0)}{(1-p(Q_{0},\pmb{\theta}_0))} \right]\\
&=\mathbb{E}\left[  \frac{p'_{\theta_j}(Q_{0},\pmb{\theta}_0)p'_{\theta_l}(Q_{0},\pmb{\theta}_0)}{p(Q_{0},\pmb{\theta}_0)(1-p(Q_{0},\pmb{\theta}_0))}\right] =\Sigma_{jl} .
\end{align*}

\end{proof}

\end{appendices}

\end{document}